\newtheorem{theorem}{Theorem}[section]
\newtheorem{lemma}[theorem]{Lemma}
\newtheorem{corollary}[theorem]{Corollary}
\theoremstyle{definition}
\newtheorem{example}[theorem]{Example}
\theoremstyle{remark}
\newtheorem{remark}[theorem]{Remark}
\theoremstyle{question}
\numberwithin{equation}{section}
\begin{document}
\parskip.3em

\title{Circular handle decompositions of free genus one knots}

\author{Fabiola Manjarrez-Guti\'errez}
\address{CIMAT, A.P. 402, Guanajuato 36000, Gto., M\'EXICO}
\email{fabiola@cimat.mx}

\author{V\'\i ctor N\'u\~nez}
\email{victor@cimat.mx}

\author{Enrique Ram\'\i rez-Losada}
\email{kikis@cimat.mx}

\subjclass{57M25}
\keywords{Handle decompositions, free genus, almost fibered.}

\maketitle

\begin{abstract}
We determine the structure of the circular handle decompositions of
the family of free genus one knots. Namely, if $k$ is a free genus one
knot, then the handle number $h(k)=$ 0, 1 or 2, and, if $k$ is not
fibered (that is, if~$h(k)>0$), then $k$ is almost fibered.
For this, we develop \emph{practical} techniques to construct circular handle
decompositions of knots with free Seifert surfaces in the 3--sphere
(and compute handle numbers of many knots), and, also, we characterize
the free genus one knots with more than one Seifert surface.
These results are obtained through analysis
of spines of surfaces on handlebodies. Also we show that there are
infinite families of free genus one knots with either $h(k)=1$ or~$h(k)=2$.

\end{abstract}

\section{Introduction}

In the study of the topology of a given 3--manifold, $M$, it has been
useful to consider
regular real-valued Morse functions $f:M\rightarrow\mathbb{R}$
where~$M$ has some smooth structure. A regular real-valued Morse
function on $M$  
corresponds to a handle decomposition of $M$ of the form $M= b_0 \cup B_1
\cup P_1\cup\cdots \cup B_r \cup P_r \cup b_3$ where~$b_0$ is a collection
of 0-handles, $B_j$ is a collection of 1--handles, $P_j$ is a
collection of 2--handles, and $b_3$ is a collection of 3--handles, in such
a way that
the  $i$--handles of the decomposition are neighbourhoods of the critical
points of index $i$ of the Morse function ($j=1,\dots, r$, and $i=0,1,2,3$).
In a celebrated paper (\cite{ST}),  
it is introduced  the concept of 
\textit{thin position} for 3--manifolds; the idea is to build the
manifold as described above (that is, step by step: adding to the
set $b_0$ the set 
$B_1$, and then adding $P_1$, and then adding $B_2$, and so on) with a
sequence of 
selected sets of 1--handles and
sets of 2--handles chosen to keep the boundaries of the intermediate steps as
simple as possible. 

Now if a 3--manifold $M$ satisfies $H^1(M;\mathbb{Q})\neq0$, then there are
essential (non-nulhomotopic) regular Morse functions $f:M\rightarrow S^1$, and 
one can always find this kind of functions having only critical points
of index~1 and~2 (see Section~\ref{sec22}). 
Such a function 
corresponds to a \emph{circular handle decomposition}  
$M= F\times [0,1] \cup B_1 \cup P_1 \cup\cdots \cup B_r \cup P_r $
where  $F$ is a properly embedded surface in $M$, $B_j$ is a collection of
1--handles, and $P_j$ is a 
collection of 2--handles (the handles are glued along,
say,~$F\times\{1\}$), and, as above,  
the set of $i$--handles of the decomposition corresponds to the critical
points of index $i$ of the Morse function. With this kind of 
circular handle decompositions we may also require that the intermediate steps
be as simple as possible: that requirement acquires the notion of thin
position for circular handle decompositions. The existence of
these decompositions gives rise to numerical topological invariants
such as the (circular) handle number, $h(M)=\sum_{1=1}^r\#(B_i)$ where
the sum
$\sum\#(B_i)$ is minimal among all  circular handle decompositions; also, when
the decomposition is in thin position, we obtain the circular width,
$cw(M)$ (See Section~\ref{sec24}).

Outstanding examples of manifolds that admit circular handle
decompositions, are the exteriors of links in $S^3$. In this case the
interesting intermediate surfaces in the decomposition are Seifert
surfaces for the given link (these intermediate surfaces have no
closed components, and, if the decomposition is in thin position,
they
are a sequence of Seifert surfaces which are alternately
incompressible and weakly 
incompressible. See \cite{fabiux}, Theorem~3.2, where there is a statement for
knots, but its proof works verbatim for links).

If the exterior of a link $\ell$ in $S^3$ admits a circular
decomposition of the form~$E(\ell)= F\times [0,1] \cup B_1 \cup P_1 $,
and this decomposition is in 
thin position, we say that $\ell$ is an \emph{almost fibered
  link}. One may regard the set of almost fibered knots as the set of knots with
non-trivial simplest circular handle structure.

Thus, an interesting problem of this theory
is: Determine the set of all
almost fibered knots. We solve this problem for the family of free
genus one knots. In fact we show that all free genus one knots are
almost fibered (Theorem~\ref{thm67}).

Also it is interesting to find explicit constructions of circular
handle decompositions of the exterior of a given link which are
minimal (that is, that realize the handle number), or that are in thin
position. In \cite{Go1}, although in other context,
explicit minimal circular handle decompositions of the exterior of the
250 knots in Rolfsen's table are given. Of these knots, 117 are
fibered, and 132 have handle number one. 
As far as we know, there
are no other 
previously published explicit constructions of circular handle decompositions of
exteriors of links in the 3--sphere.

As mentioned above, in this paper we are interested mainly in the
circular handle 
structures of the family of free genus one knots:

In the first part of this work (Section~\ref{sec3}) we develop techniques to
construct explicit circular decompositions of link exteriors for
links that admit a free Seifert surface; these decompositions are
interesting, of 
course, when the free Seifert surface used in the construction is of
minimal genus for the link. The information needed to construct these
decompositions for the exterior of a given link is encoded in some
spine of a free Seifert surface of the link. In this sense, the
techniques developed in Section~\ref{sec3} (and through all this paper) could be
regarded as elements for a possible theory of spines of surfaces on
handlebodies that might be worthy of consideration. As applications we
construct minimal circular 
decompositions for all rational knots and links and, also, for a
family of pretzel knots, 
namely, pretzel knots of the form~$P(\pm3,q,r)$ with
$|q|,|r|$ odd integers $\geq3$. These circular decompositions for both
families of links are all minimal and
have handle number~one; they are also in thin position, giving also the
circular width of each link considered. This last family gives examples
of 
non-fibered
knots whose handle number is strictly less than their tunnel number
(Remark~\ref{remark311}). Also, it is shown that free genus one knots have handle
number $\leq 2$ (Corollary~\ref{coro36}).

Secondly (Section~\ref{sec4}), we construct circular
handle decompositions for the exteriors of all pretzel knots of the
form $P(p,q,r)$ with $|p|,|q|,|r|$ odd integers $\geq5$, and we show that these
decompositions are minimal with handle number two (Theorem~\ref{thm41}), and are
also in thin position, giving the circular width equal to 6 for each of these
knots. 
These examples answer a question posed in
\cite{PRW} (Remark~\ref{remark45}). 

Next, in Section~\ref{sec5}, we give a characterization
of the free genus one knots that admit at least two different
(non-parallel) Seifert
surfaces of genus one. This characterization is given in terms of the
existence of a special spine for the given genus one free Seifert surface of
the knot (see Theorem~\ref{thm52}).

Using the characterization given in  Section~\ref{sec5} we show, in the final
part of this work, 
that all (non-fibered) free
genus one knots are almost fibered (Theorem~\ref{thm67}).

It follows from the proof of Theorem~\ref{thm67}, that the free genus one
knots with handle number two have a unique minimal genus Seifert
surface (that is, free genus one knots with at least two genus one
Seifert surfaces have handle number one). It is an interesting open
problem to determine the family of free genus one knots with handle
number two. 


\section{Preliminaries}
Unless explicitly stated, we will use the word `knot' for a knot or a
link in $S^3$. That is, we will emphasize connectedness if
needed. Otherwise, we will admit non-connected knots.

Let $X$ be a manifold and let $Y\subset X$ be a sub-complex. We write
$E(Y)=\overline{X-\mathcal{N}(Y)}$ for the \emph{exterior} of $Y$ in
$X$ where $\mathcal{N}(Y)$ is a regular neighbourhood of $Y$ in $X$. 

Let $X$ be a manifold and let $Y\subset X$ be a properly embedded
submanifold. $Y$ is called \emph{$\partial$--parallel} in $X$, or
\emph{parallel into} $\partial X$, if there is an embedding
$e:(Y,\partial Y)\times I\rightarrow (X,\partial X)$, such that
$e_0:Y\rightarrow Y$ is the 
identity, and $e_1(Y)\subset \partial X$. If $Y$ is
$\partial$--parallel in $X$ with embedding $e:(Y,\partial Y)\times
I\rightarrow (X,\partial X)$, then the submanifold~$e(Y\times I)$ is
called a \emph{$\partial$--parallelism for} $Y$. Notice that if $Y$ is
disconnected with components~$Y_1,\dots,Y_n$, and $Y$ is
$\partial$--parallel in $X$ with a $\partial$--parallelism $W$,
then $W$ is a disjoint union of $\partial$--parallelisms
$W_1,\dots,W_n$ for $Y_1,\dots,Y_n$, respectively.

\subsection{Seifert Surfaces}
Let $k\subset S^3$ be a knot, and let $F$ be a Seifert surface for~$k$;
that is, $F$ is an orientable surface and $\partial F=k$. Then, by
drilling out a small neighbourhood, $\mathcal{N}(k)$, of $k$, the
surface $\widehat F=F\cap E(k)$ is a properly embedded surface in
$E(k)$, the exterior of $k$ in $S^3$, and one may assume that
$\partial\widehat F$ is parallel to $k$ in $\mathcal{N}(k)$. Usually,
we identify $F$  
with $\widehat F$; but, more appropriately, we start with~$F\subset E(k)$ a
Seifert surface for $k$. Seifert surfaces may be disconnected, but
they are not allowed to contain closed components. The \emph{genus}~$g(k)$ of a knot $k$ is the minimal genus among all Seifert surfaces for $k$.

A surface $F\subset S^3$ is called \emph{free} if $E(F)$ is a
handlebody. The \emph{free genus} of a knot $k$, $g_f(k)$, is
the minimal genus among all free Seifert surfaces for $k$.

In this work we will be interested mainly in free genus one knots.

\subsection{Handle decompositions of  rel~$\partial$ Cobordisms}
\label{sec22}
Let $W$ be a cobordism rel~$\partial$ between
surfaces with no closed components, $\partial_+W$ and $\partial_-W$.
A \emph{moderate handle decomposition of $W$} is a decomposition of
the form 
$W\cong \partial_+W\times I\cup(\textrm{1--handles})\cup(\textrm{2--handles})$. Given $W$, a cobordism rel~$\partial$ between
surfaces with no closed components, $\partial_+W$ and $\partial_-W$,
 it is easy to find
a moderate decomposition as above by
considering a triangulation of the exterior
$E(\partial_+W)=\overline{W-\mathcal{N}(\partial_+W)}$.

Given a cobordism $W$ and a moderate handle decomposition for $W$, one
can find a regular Morse function $f:W\rightarrow I$ which realizes
the handle decomposition of~$W$. That is, $f$ only has critical points
of index 1 and 2, and neighbourhoods of the critical points of $f$
correspond to the 1 and 2--handles of $W$, and the preimage of each
regular value of $f$ is a properly embedded surface in $W$. We will call
such a Morse function a \emph{moderate Morse function} (see \cite{PRW}).

\subsection{Circular decompositions}

Let $k$ be a knot in $S^3$. Since $H_1(E(k))$ is a free Abelian group
of positive 
rank, we can always find an essential (non-nulhomotopic) 
moderate Morse
function~$f:E(k)\rightarrow S^1$. Any
such Morse function, as in Subsection~\ref{sec22}, induces a 
decomposition
$$E(k)=(F\times I)\cup B\cup P$$
where $F\subset E(k)$ is a Seifert surface for $k$, $B$ is a set of
$n$ 1--handles glued along, say, 
$F\times\{1\}$, and $P$ is a set of the same number, $n$, of 2--handles
glued along the same side.

We call such a decomposition a \emph{circular handle decomposition of
  $E(k)$ based on~$F$}, and write $h(F)=n$, the \emph{handle number of
  $F$}, where $n$ is the minimal number of
1--handles among all circular handle decompositions of $E(k)$ based on $F$. The
\emph{circular handle 
  number of~$k$}, or simply the \emph{handle 
  number of~$k$},~$h(k)$, is the minimal $h(F)$ among all Seifert
surfaces $F\subset E(k)$.
Notice that $h(k)=0$ if and only if $k$ is a fibered knot.

By rearranging the critical points of a moderate Morse
function $f:E(k)\rightarrow S^1$, we can 
thin a circular handle
decomposition of $E(k)$:
$$E(k)=(F\times I)\cup B_1\cup P_1\cup B_2\cup P_2\cup\cdots\cup
B_\ell\cup P_\ell$$
where $B_i$ is a set of 1--handles glued along $F\times\{1\}$, and $P_i$ is
a set of 2--handles,~$i=1,\dots,\ell$ (of course, it is not always possible
to thin a given circular handle decomposition). 

For $i=1,\dots,\ell$, the set $W_i=(F\times [\frac12,1])\cup B_1\cup
P_1\cup\cdots\cup B_i$ gives a moderate handle decomposition for the
rel $\partial$ cobordism $W_i$  with $\partial_+W_i=F\times\{\frac12\}$.
Write~$S_i=\partial_-W_i$. Now we define
$$\displaystyle c(S_i)=\sum_{j=1}^{n_i}\left(1-\chi(G_{i,j})\right)$$ 
where $\chi$ stands for Euler characteristic, and
$G_{i,1},\dots,G_{i,n_i}$ are the components of~$S_i$ (Notice that
there are no closed components of $S_i$ for, $F$ has no closed
components and the handle decomposition is moderate). Order the surfaces
$S_{\sigma(1)},S_{\sigma(2)},\dots,S_{\sigma(\ell)}$ in such a way that
$c(S_{\sigma(i)})\geq c(S_{\sigma(i+1)})$ for $i=1,\dots,\ell-1$, where
$\sigma$ is a permutation in the symbols $1,\dots,\ell$. Then
the \emph{circular width} of this decomposition is the tuple
$(c(S_{\sigma(1)}),c(S_{\sigma(2)}),\dots,c(S_{\sigma(\ell)}))$. The
\emph{circular width of $k$}, $cw(k)$, is the minimal circular width,
with respect to
lexicographic order, among all thinned circular decompositions of
$E(k)$ based on all possible Seifert surfaces for $k$.

Let $k\subset S^3$ be a knot such that its circular width has the form
$cw(k)=(n)$. Then we write $cw(k)=n$, or $cw(k)\in\mathbb{Z}$. If $k$
is a non-fibered knot and $cw(k)\in\mathbb{Z}$, then $k$ is said to be an
\emph{almost fibered} knot. 

\begin{remark}
\label{remark22}
\textbf{Equivalence of knots}.
Let $k,\ell\subset S^3$ be two knots. If the pairs~$(S^3,k)$ and
$(S^3,\ell)$ are homeomorphic, then their exteriors also are
homeomorphic, $E(k)\cong E(\ell)$; and
therefore, the exteriors of $k$ and $\ell$ have homeomorphic handle
decompositions. We 
regard two knots as being \emph{equivalent} if their
corresponding pairs are homeomorphic.
\end{remark}

\begin{remark}
\label{remark23}
\textbf{Construction of circular decompositions}.
To describe or, rather, to actually \emph{construct} a decomposition
$$E(k)=(F\times I)\cup B\cup P$$
where $B$ is a set of 1--handles, and $P$ is a set of 2--handles,
it is convenient to write
$$\textstyle E(k)=(F\times [\frac12,1])\cup B\cup P\cup(F\times[0,\frac12]).$$
Then to obtain (describe) this circular decomposition we can either
\begin{enumerate}
\item \label{uno} Start with a regular neighbourhood $\mathcal{N}(F)$ of $F$ in
  $E(k)$. Then add a
  number of 1--handles to $\mathcal{N}(F)$ (the elements of $B$) on one side, 
  say $F\times\{1\}$, and then add the same number of 2--handles (the
  elements of $P$) on the
  same side.

\noindent
  The complement of the union above is a regular neighbourhood of
  $F\times\{0\}$ in $E(k)$.
Or

\item \label{dos} Start with  $E(F)$, the exterior of $F$ in $E(k)$. Then drill
  a number of 2--handles (the elements of $B$) out of $E(F)$. Now drill
  the same number of 
  1--handles (the elements of $P$) out of $E(F)$.

\noindent
  Here one should be careful that the drilled out 2--handles intersect
  $\partial E(F)$ on the same side, say $F\times\{1\}$, and that the
  following drilled out 1--handles intersect the remaining boundary of
  $E(F)$ on the same side.

\noindent
  The result of this drilling is a regular neighbourhood of
  $F\times\{0\}$ in $E(k)$.
\end{enumerate}

Of course, in (\ref{uno}) above, `$\mathcal{N}(F)$' stands for
$F\times [\frac12,1]$, and in (\ref{dos}), `$E(F)$' stands for the
exterior $\overline{E(k)-F\times[\frac12,1]}$. To describe a thinned
circular decomposition, one proceeds similarly, but now there will be
several steps. Note that, in this kind of decomposition, a thinned
decomposition, the number of
1--handles and the number of 2--handles at each step are not necessarily the
same.

We emphasize that the  main use of the program outlined in (\ref{uno}) is to
describe an explicit circular handle decomposition of some given
example. 
\end{remark}

\begin{remark}
\label{remark24}
\textbf{Decompositions of non almost fibered knots}.
Now start with a circular decomposition
$$\textstyle E(k)=(F\times [\frac{1}{2},1])\cup B_1\cup P_1\cup B_2\cup P_2\cup\cdots\cup
B_\ell\cup P_\ell\cup(F\times[0,\frac12])$$
which realizes $cw(k)$, the circular width of $k$. For $i=1,\dots,\ell$, the set $V_i=(F\times [\frac12,1])\cup B_1\cup
P_1\cup\cdots\cup B_i\cup P_i$ gives a moderate handle decomposition for the
rel~$\partial$ cobordism $V_i$  with $\partial_+V_i=F\times\{\frac12\}$.
Write $T_i=\partial_-V_i$. Then the $\ell$ disjoint surfaces $T_1,T_2,\dots,T_{\ell}=F$
are incompressible in $E(k)$ and are non-parallel by pairs
(see~\cite{fabiux}, Theorem~3.2. As noted in the Introduction, the
theorem also holds for non-connected knots). That is, 

\emph{If $k$ is non fibered and not an almost fibered knot, then $k$
  has at least two non-parallel 
  incompressible Seifert surfaces.}
\end{remark}

\begin{remark}
\label{remark25}
\textbf{Decompositions of pairs}.
Let $k\subset S^3$ be a knot with Seifert surface~$F\subset
E(k)$. There is a copy of $F$, $F_0\subset \partial E(F)$, such that $E(F)$ is
a cobordism 
rel~$\partial$ between $F_0=\partial_+E(F)$ and $\partial_-E(F)$. We
commit an abuse of notation by
identifying $F$ with $F_0$. To
find a circular decomposition of $E(k)$ based
on $F$ is the same as finding a moderate handle decomposition of the
rel~$\partial$ cobordism~$E(F)$. A \emph{handle decomposition of the
  pair $(E(F),F)$} is, by definition, a handle decomposition of the
rel~$\partial$ cobordism~$E(F)$.

Now let $\ell\subset S^3$ be another knot with Seifert surface $G\subset
E(\ell)$. If there is a homeomorphism of pairs $(E(F),F)\cong(E(G),G)$, then
the handle decompositions of the pairs $(E(F),F)$ and $(E(G),G)$ (as
well as those of $E(F)$ and $E(G)$ as rel~$\partial$ cobordisms) are in 1-1 correspondence via the given
homeomorphism. That is:

\emph{To find circular decompositions of $E(k)$ based on $F$, we need
  only to construct moderate handle decompositions of the homeomorphism class
  of the pair $(E(F),F)$. In particular, it is not necessary to regard
  $E(F)$ as embedded in $S^3$.}

This remark is very helpful in the search of circular decompositions.
\end{remark}


\subsection{Spines}
\label{sec24}
Let $X$ be either a handlebody or a surface with boundary. A \emph{spine} of
$X$ is a graph $\Gamma\subset X$ such that $X$ is a regular
neighbourhood of $\Gamma$. In this work we mainly consider
spines of the form $\Gamma\cong\bigvee_{i=1}^n S^1$, a wedge of
circles. 
We
write~$\Gamma=a_1\vee\cdots\vee a_n$ to emphasize the circles
involved, and we assume that the curves $a_i$ carry a given
orientation. Notice that 
it is allowed for $\Gamma$ to be a single simple closed curve.

Let $k\subset S^3$ be a knot, and let $F\subset E(k)$ be a Seifert
surface for $k$. A regular neighbourhood $\mathcal{N}(F)$ of $F$ in
$E(k)$ admits a product structure $\mathcal{N}(F)=F\times I$
where~$\partial F\times I=\mathcal{N}(k)\cap\mathcal{N}(F)$. A spine 
$\Gamma\subset F\times\{0\}$, $\Gamma\cong\bigvee_{i=1}^n S^1$, is
also a spine for $\mathcal{N}(F)$, and the 
graph $\Gamma$ induces a product structure $\mathcal{N}(F)=G\times
I$, where, say,~$G\times\{0\}$ is a regular neighbourhood of $\Gamma$
in $\partial\mathcal{N}(F)$ (here, of course, $G$ is isotopic to $F$
in $\partial\mathcal{N}(F)$). A
spine~$\Gamma\subset F\times\{0\}$ is also a graph
$\Gamma\subset\partial E(F)$. A spine for
$F$,~$\Gamma\subset F\times\{0\}$ (or $\Gamma\subset F\times\{1\}$),
is 
called a \emph{spine for $F$ on  $\partial\mathcal{N}(F)$}. Also, we
say that $\Gamma$ is a \emph{spine for $F$ on  $\partial E(F)$}.

If $\Gamma$ is a spine for $F$ on $\partial E(F)$, and $G$ is a regular
neighbourhood of $\Gamma$ in~$\partial E(F)$, then a \emph{handle
  decomposition for the pair $(E(F),\Gamma)$} is, by definition, a handle
decomposition for the pair $(E(F),G)$. 

Let $\Gamma=a_1\vee \cdots\vee a_n$ be a spine for $F$ on
$\partial E(F)$, and let $t(a_i)$ be a Dehn twist on~$F$ along the curve
$a_i$. If $\widetilde\Gamma$ is the graph obtained from $\Gamma$ by
replacing the curve~$a_j$ by the curve
$t(a_i)(a_j)$, then $\widetilde\Gamma$ is also a spine for $F$. The graph
$\widetilde\Gamma$ is called \emph{the spine for $F$ obtained from
  $\Gamma$ by sliding $a_j$ along
  $a_i^{\pm1}$} ($i,j\in\{1,\dots,g\}$).

\begin{remark}
\label{remark26}
Notice that if $\widetilde\Gamma$ is
another spine for $F$ on $\partial E(F)$, and $\widetilde G$ is a regular
neighbourhood of $\widetilde\Gamma$ in $\partial E(F)$, then the pairs
$(E(F),\Gamma)$ and $(E(F),\widetilde\Gamma)$ usually are not homeomorphic, but
the pairs $(E(F),F)$ and $(E(F),\widetilde G)$ are homeomorphic.
Thus:

\emph{To find circular decompositions of $E(k)$ based on $F$, we need
  only to construct moderate handle decompositions of the homeomorphism class
  of a pair 
  $(E(F),\Gamma)$ for some spine $\Gamma$ for $F$ on $\partial E(F)$.}
\end{remark}

\begin{remark}
\label{remark27}
Let $F\subset S^3$ be a connected orientable surface with
boundary~$k=\partial F$. If a spine $\Gamma$ for $F$ on $\partial
\mathcal{N}(F)$ is also a spine for~$E(F)$, then 
$k$ is a fibered knot with fiber $F$. Indeed, $E(F)$ is a handlebody (for it is an irreducible
3--manifold with connected boundary, and with free fundamental group), and
both $\mathcal{N}(F)$ and $E(F)$ admit a product structure of the form
$G\times I$, where $G$ is a regular
neighbourhood of $\Gamma$ in~$\partial\mathcal{N}(F)=\partial E(F)$. 
\end{remark}

\subsection{Whitehead diagrams}

Let $H$ be a genus $g$ handlebody, and let $x_1,\dots,x_g$ be a system of
meridional disks for $H$. The exterior $E(x_1\cup\cdots\cup x_g)$ is a
3--ball with~$2g$ fat vertices $x_1,\bar x_1,\dots,x_g,\bar x_g$ on its
boundary, where $x_i=x_i\times\{0\}$ and~$\bar x_i=x_i\times\{1\}$ are
the copies of~$x_i$ in the product structure $\mathcal{N}(x_i)=
x_i\times I\subset H$, $i=1,\dots,g$.

There is a 1-1
correspondence between isotopy classes of systems of meridional disks
$\{x_1,\dots,x_g\}$ for $H$, and homotopy classes of spines
of the form $a_1\vee\cdots,\vee a_g\subset H$ such that $\#(a_i\cap
x_i)=1$, and $a_i\cap x_j=\emptyset$ for~$i\neq j$, $i,j=1,\dots,g$. It
is convenient to commit an abuse of notation, and write both $\{x_1,\dots,x_g\}$
for a meridional system of disks for~$H$, and $\{x_1,\dots,x_g\}$ for the
corresponding basis of $\pi_1(H)$ represented by the curves
$a_1,\dots,a_g$ in the 1-1 correspondence above. Throughout this paper we
adhere to this abuse of notation.

A graph~$\Gamma=a_1\vee\cdots\vee a_n\subset \partial H$
intersects $E(x_1\cup\cdots \cup x_g)$ in
a set of sub-arcs of the curves $a_i$; some of these arcs intersect
in the base point of~$\Gamma$. 
These arcs together with $x_1,\bar x_1,\dots,x_g,\bar x_g$ form a
graph $G$ with $2g$ fat vertices immersed on~$\partial E(x_1\cup\cdots
\cup x_g)$. The  
base point of~$\Gamma$ appears in the drawing on $\partial E(x_1\cup\cdots
\cup x_g)$ as the intersection of
some edges of~$G$, but the base point of~$\Gamma$ is not considered a
vertex of $G$. We require that the graph~$G$ has no
loops, that is, that there are no edges with ends
in the same fat vertex of $G$. In our examples, we will be able to realize
this assumption ---no loops in $G$--- through the use of some
isotopies of~$H$. For each $i$ we number the ends of the arcs in $x_i$
and   
$\bar x_i$ in such a way that the gluing homeomorphisms, which
recover~$H$ from~$E(x_1\cup\cdots\cup x_g)$, identify equally
numbered points. The immersion of the graph~$G$ in $\partial E(x_1\cup\cdots\cup
x_g)$, together with these numberings, is called
\emph{the Whitehead
  diagram of the pair 
  $(H,\Gamma)$} associated to
the system of meridional disks $x_1,\dots,x_g\subset H$  (see Figure~\ref{figure1}). The graph~$G$
is called the \emph{Whitehead graph} of the corresponding Whitehead diagram.
\begin{figure}
\centering
\includegraphics[height=50mm]{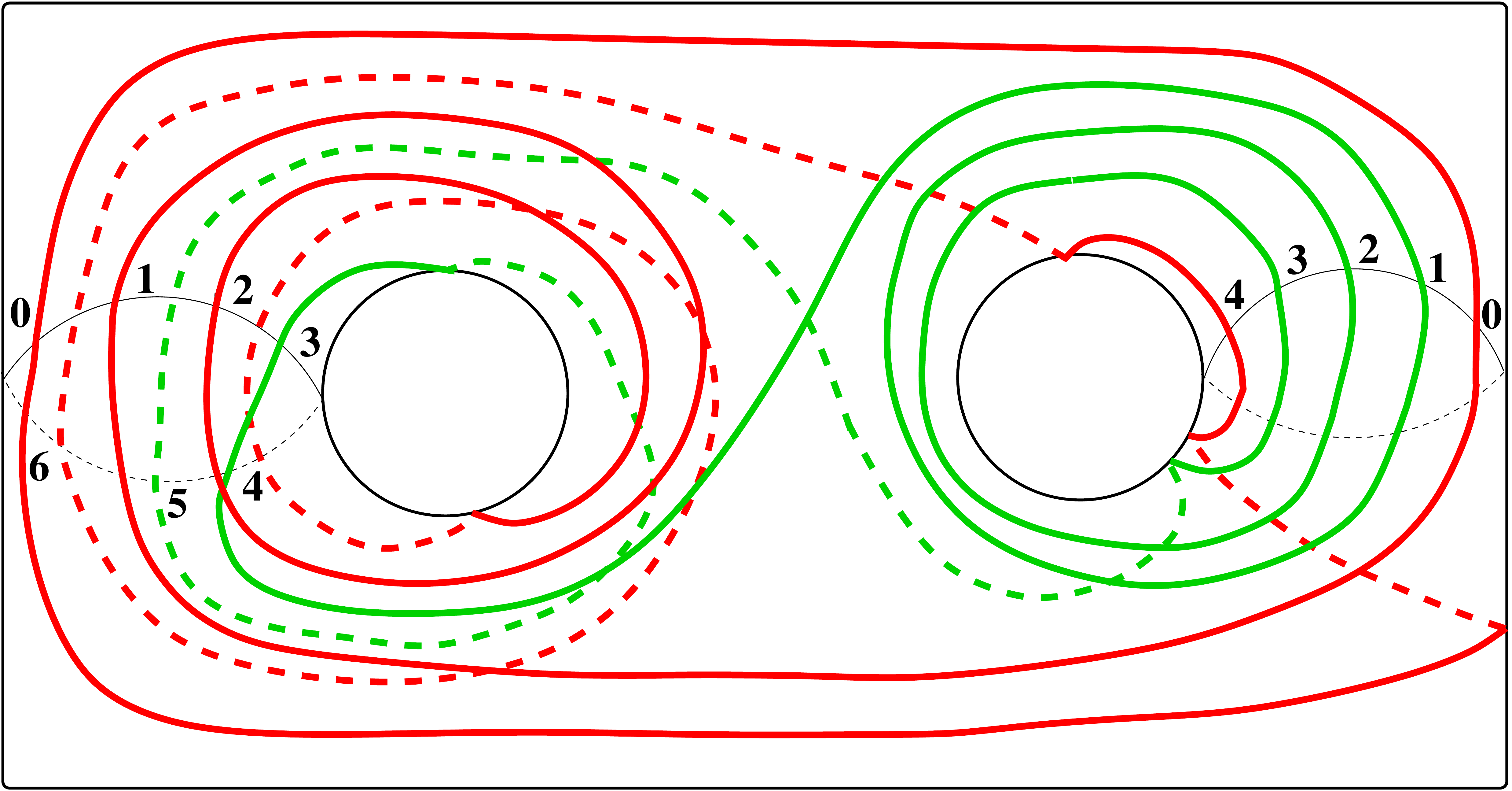}
\vskip.2true cm
\includegraphics[height=70mm]{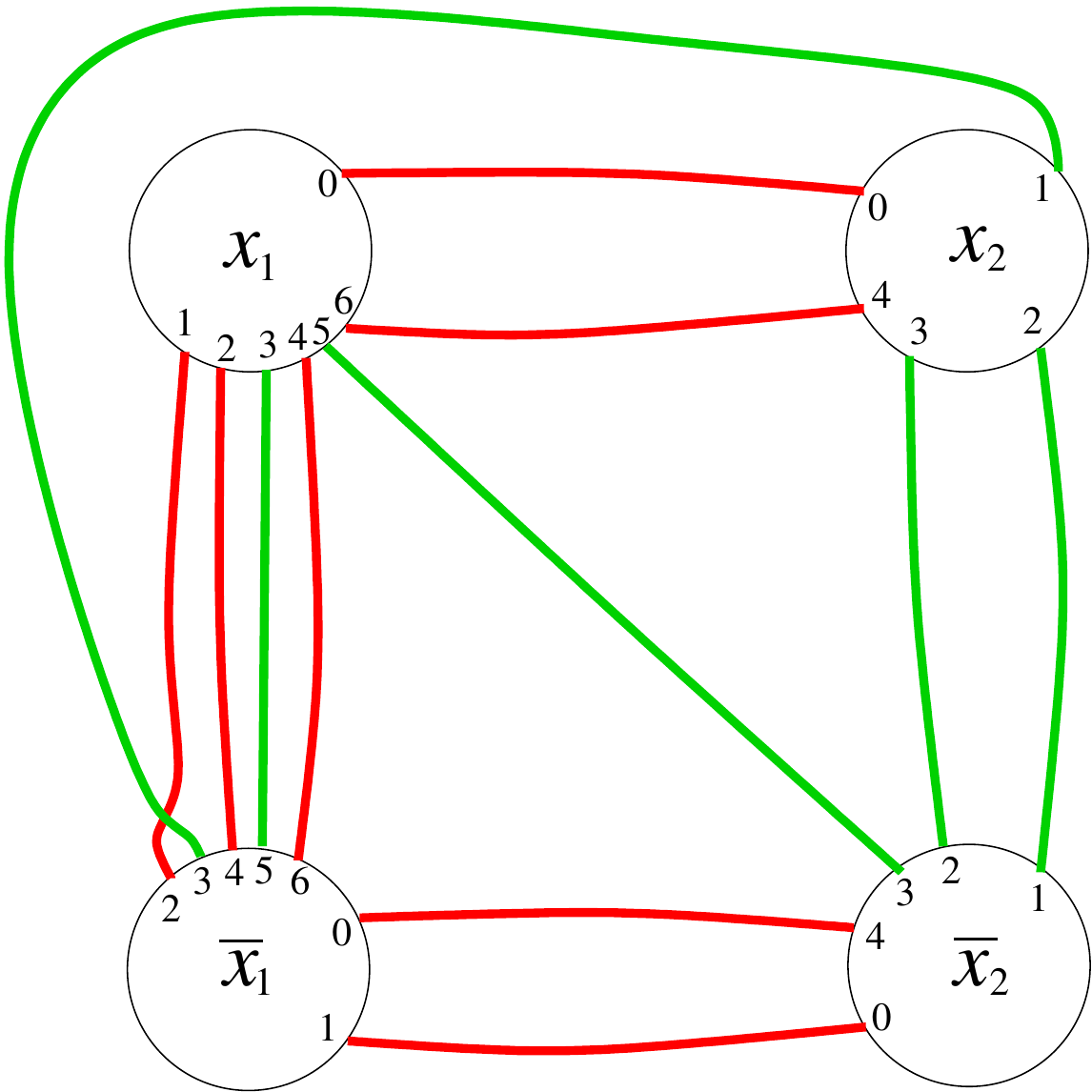}
\caption{A Whitehead diagram associated to the exterior of the pretzel knot $p(5,5,5)$.}
\label{figure1}
\end{figure}

Let $X$ be a graph, and let $e$, $f$ be two edges of $X$; we say that
$e$ and $f$ are \emph{parallel} if they connect the same pair of
vertices of $X$. The
\emph{simple graph associated to~$X$} is the graph obtained
from $X$ by 
replacing each parallelism class of edges of $X$ by a single edge,
and deleting each loop in $X$ (if any).

If $X$ is a connected graph, a vertex $v$ of~$X$ is called a \emph{cut
  vertex} of $X$ if $X-\{v\}$ is not connected. Notice that a loop-less
graph
$X$ contains a cut vertex if and only if the simple graph associated
to $X$ contains a cut vertex.

Let $\mathcal{F}$ be a free group with basis $Y$, and let $A$ be a set
of cyclically reduced words on $Y\cup Y^{-1}$, regarded as elements of
$\mathcal{F}$. The 
\emph{genuine 
  Whitehead graph of~$A$} is the graph, $\Gamma$,  with vertex set
$Y\cup Y^{-1}$, 
and if~$\alpha\in A$, when cyclically $\alpha$ contains the word
of length two $v_1v_2$, then there is an edge in~$\Gamma$ from $v_1$
to $v_2^{-1}$ for,~$v_1,v_2\in Y\cup Y^{-1}$. If~$\alpha$ is of length 1,
$\alpha=v$, then there is an edge from $v$ to~$v^{-1}$. If $A$ is a
set of elements of $\mathcal{F}$, we can replace 
$A$ with a set $A'$ of cyclically reduced words representing
 the conjugacy classes of the elements of $A$, and then the
\emph{genuine Whitehead graph} of~$A$ is, by definition, the genuine Whitehead
graph of $A'$. The genuine Whitehead graph of a set of elements of
$\mathcal{F}$ is regarded as being embedded in 3--space and also contains no loops.

Let $\mathcal{F}$ be a free group and let
$A$ be a set of elements of
$\mathcal{F}$. Then~$A$ is called \emph{separable} if there
exists a non-trivial splitting 
$\mathcal{F}\cong \mathcal{F}_1*\mathcal{F}_2$ such that each
$\alpha\in A$ represents, up to conjugacy, 
an element of $\mathcal{F}_j$ for some $j$.

\begin{theorem}[Theorem~2.4 of~\cite{stallings}] 
\label{thm24stallings}
Let $A$ be a set of elements
of a free group $\mathcal{F}$ with genuine Whitehead graph $\Gamma$.
If $\Gamma$ is connected and if $A$ is
separable in~$\mathcal{F}$, then there is a cut vertex in $\Gamma$.
\end{theorem}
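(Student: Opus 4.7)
The plan is to work geometrically in a handlebody model. Realize the free group $\mathcal{F}$ with basis $Y=\{y_1,\dots,y_g\}$ as $\pi_1(H)$ for a handlebody $H$ of genus $g$, with $Y$ corresponding to a system of meridian disks $\{D_y\}_{y\in Y}$. Each cyclically reduced word representative $\alpha\in A$ is drawn on $\partial H$ as a closed curve that meets each $D_y$ in exactly the number of points prescribed by $\alpha$. Cutting $H$ along $\bigcup_{y}D_y$ produces a $3$-ball $B$ whose boundary sphere $\partial B$ carries the Whitehead diagram of $A$: the $2g$ subdisks $\{D_y,\bar D_y\}$ on $\partial B$ are the vertices of $\Gamma$ and the sub-arcs of the curves representing $A$ are its edges.

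The first substantive step is to convert the algebraic separability of $A$ into a geometric separating disk. A non-trivial splitting $\mathcal{F}\cong\mathcal{F}_1*\mathcal{F}_2$ in which every $\alpha\in A$ is conjugate into some $\mathcal{F}_j$ corresponds (by a Grushko/Stallings-fold style argument, or by lifting to the universal cover and applying a sphere-theorem argument) to an essential properly embedded disk $D\subset H$ that splits $H$ into subhandlebodies $H_1,H_2$ with $\pi_1(H_j)\cong\mathcal{F}_j$, together with an ambient isotopy of $\partial H$ putting the representative curves of $A$ disjoint from $\partial D$. I would then minimize $|D\cap\bigcup_y D_y|$ among all such choices of $D$, removing circles of intersection by the standard innermost disk argument.

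If the minimum is zero, then $D$ lies in the $3$-ball $B$ and is boundary-parallel there, so $\partial D$ bounds a disk $\Delta\subset\partial B$ that is disjoint from both the vertices and the edges of $\Gamma$; this forces all of $\Gamma$ to one side of $\partial D$, and combined with essentiality of $D$ gives a contradiction. In the remaining case, pick an outermost arc $\alpha$ of $D\cap\bigcup_y D_y$ in $D$, cutting off a disk $E\subset D$ with $\partial E=\alpha\cup\beta$, where $\alpha$ lies on some $D_y$ and $\beta\subset\partial H$ is an arc avoiding the $A$-curves. Since $E\subset B$ is boundary-parallel, $\partial E$ bounds a disk $E'\subset\partial B$; the arc $\beta$ has both endpoints on the vertex $D_y$ and meets no edge of $\Gamma$, so it partitions $\partial B\setminus D_y$ into two regions. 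If both regions contain edges of $\Gamma$, then $D_y$ is a cut vertex and we are done. If one region is edge-free, an isotopy of $D$ across $E$ strictly decreases $|D\cap\bigcup_y D_y|$, contradicting minimality. The main obstacle, as in all Whitehead-type arguments, lies in the first geometric translation: producing a separating disk $D$ whose boundary simple closed curve on $\partial H$ is actually disjoint from cyclically reduced representatives of $A$. Once that set-up is in place, the outermost-arc/innermost-disk manipulations above force the desired cut vertex.
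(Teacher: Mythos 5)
This statement is not proved in the paper at all: it is quoted verbatim from Stallings (Theorem~2.4 of~\cite{stallings}), so there is no internal argument to compare yours with. Your plan is the classical Whitehead--Stallings geometric route (realize $A$ by its canonical curve system on $\partial H$, take an essential disk avoiding the curves with $|D\cap\bigcup_y D_y|$ minimal, and run an outermost-arc analysis), which is indeed how this theorem is traditionally proved. The genuine gap is exactly the step you flag and then wave at: producing an essential disk $D\subset H$ with $\partial D$ disjoint from the \emph{fixed} canonical realization of $A$. Grushko/folding gives you a basis adapted to the splitting $\mathcal{F}_1*\mathcal{F}_2$ and hence a separating disk $D_0$ disjoint from curves drawn with respect to that \emph{new} basis; but the curves you must avoid are drawn with respect to the original basis $Y$ (otherwise the graph you analyze is not $\Gamma$), and the assertion that an essential disk can be isotoped in $H$ off a fixed embedded curve system which is merely freely homotopic into its complement is precisely the hard geometric content. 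In Whitehead's original argument this is handled in $\#_g(S^1\times S^2)$, where representatives may be homotoped, at the price of a genuine normal-position argument making them simultaneously minimal with respect to the basis sphere system and disjoint from the splitting sphere; in the handlebody setting the statement ``separable implies some essential disk misses the curves'' is a theorem in its own right (it is what the present paper invokes in Lemma~\ref{lema51}), and in several expositions it is \emph{deduced from} the cut-vertex lemma via Whitehead moves, so quoting such a result here would be circular. As written, your plan does not supply this step, and the whole proof hinges on it.

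The endgame is essentially right but needs two repairs. First, you must observe that both endpoints of $\beta$ lie on the \emph{same} copy of $D_y$ in $\partial B$ (because the outermost subdisk $E$ approaches $D_y$ from one side); without this, the wave does not single out one vertex of $\Gamma$, since $D_y$ and $\bar D_y$ are distinct vertices. Second, the reduction in the ``edge-free'' case cannot be an isotopy of $D$ across $E$, since $E$ is a subdisk of $D$; the correct move is to push the boundary wave $\beta$ across the empty region and past the meridian circle (or to perform the corresponding disk swap), and you need connectivity of $\Gamma$ once more to know that the empty region contains no other fat vertices --- every vertex of a connected $\Gamma$ meets an edge --- so that the move lowers $|\partial D\cap\bigcup_y\partial D_y|$ by two while keeping $\partial D$ off the $A$-curves. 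With those points made explicit, and with the translation step honestly established, your outline does assemble into the standard proof.
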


The following result follows from
Theorem~\ref{thm24stallings} and is included here for future reference.

\begin{corollary}
\label{coro29}
Let $\Gamma=a_1\vee\cdots\vee a_n$ be a wedge of $n$ simple closed curves
 embedded in the boundary of a handlebody~$H$.
Assume that
 for some Whitehead diagram of the pair $(H,\Gamma)$,
the Whitehead graph of this diagram is
connected and has no cut
vertex.
Then $\Gamma$
intersects every essential disk of~$H$. 
\end{corollary}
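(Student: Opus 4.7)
The plan is to argue by contradiction via Theorem~\ref{thm24stallings}: suppose that $\Gamma$ is disjoint from some essential disk $D$ of $H$; I will show that the set $A=\{[a_1],\dots,[a_n]\}\subset\pi_1(H)$ is separable, then apply Stallings' theorem to produce a cut vertex in the Whitehead graph, contradicting the hypothesis.

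First, I would cut $H$ along $D$. Since $D$ is essential, two cases arise. If $D$ is non-separating, then $H\setminus D$ is a handlebody of genus $g-1$, and $\pi_1(H)\cong\pi_1(H\setminus D)*\mathbb{Z}$ is a non-trivial free product (the $\mathbb{Z}$-factor being generated by a loop dual to $D$). If $D$ is separating, then both pieces of $H\setminus D$ are handlebodies of positive genus---otherwise one side would be a ball and $\partial D$ would bound a disk in $\partial H$, contradicting essentiality---so $\pi_1(H)\cong\pi_1(H_1)*\pi_1(H_2)$ is a non-trivial free product. Because each $a_i$ is a simple closed curve on $\partial H$ through the common wedge point $v$ and disjoint from $\partial D$, the entire graph $\Gamma$ lies in a single component of $\partial H\setminus\partial D$, hence in a single component of $H\setminus D$. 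Consequently every $[a_i]$ is conjugate into a single free factor of the splitting above, so $A$ is separable in $\pi_1(H)$.

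Next I would identify the Whitehead graph of the hypothesized diagram with the genuine Whitehead graph of $A$, taking $x_1,\dots,x_g$ as the basis of $\pi_1(H)$ dual to the meridional system defining the diagram. This identification is definitional: tracing $a_i$ along $\partial H$, each transverse passage through a meridional disk $x_j$ contributes one letter (either $x_j$ or $x_j^{-1}$ according to side) of the cyclically reduced word representing $[a_i]$, and each arc of $a_i\cap E(x_1\cup\cdots\cup x_g)$ running between fat vertices $v_1$ and $v_2$ corresponds exactly to a cyclically consecutive pair of letters in that word, matching an edge of the genuine Whitehead graph from $v_1$ to $v_2^{-1}$. Applying Theorem~\ref{thm24stallings} to the connected genuine Whitehead graph and the separable set $A$ then forces the existence of a cut vertex, contradicting the hypothesis. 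The main subtlety is the matching of orientation conventions in the identification step; this is routine from the definitions but must be carried out with care to ensure that the two notions of ``edge'' coincide.
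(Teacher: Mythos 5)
Your proposal is correct and follows essentially the same route as the paper: identify the diagram's Whitehead graph with the genuine Whitehead graph of $\{[a_1],\dots,[a_n]\}$ with respect to the dual basis, observe that a disjoint essential disk makes this set separable, and invoke Theorem~\ref{thm24stallings} to contradict connectedness with no cut vertex. The only difference is that you justify the separability step (which the paper dismisses as clear) by cutting $H$ along $D$ and analyzing the separating and non-separating cases, which is a welcome amplification rather than a new argument.
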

\begin{proof} 
  Let $G$ be the Whitehead graph of the pair
  $(H,\Gamma)$ with respect to some system of meridional disks
  $\{x_1,\dots,x_g\}$, such that $G$ has no cut vertex and is
  connected. In particular $G$ has no loops.
  If we regard $G$ as a graph $G'$
  embedded in 3--space so that the base point of $\Gamma$ vanishes,
  then $G'$ is the genuine Whitehead graph of the set of elements of
  $\pi_1(H)$ 
  represented by $\{a_1,\dots,a_n\}$ with respect to the
  basis~$\{x_1,\dots,x_g\}$. Since $G$ is connected and has no cut
  vertex, 
  it follows that $G'$ is also connected and has no cut vertex (recall
  that the base point of $\Gamma$ is not part of~$G$; then $G$ and
  $G'$ are isomorphic graphs). If
  there is an essential disk in $H$ disjoint with $\Gamma$, then the 
  set of elements of $\pi_1(H)$ represented by $\{a_1,\dots,a_n\}$ 
  clearly is separable, and by Theorem~\ref{thm24stallings}, $G'$ has
  a cut vertex or is disconnected. Since $G'$ is connected and has no
  cut vertex, it follows that  $\Gamma$ intersects
  every essential disk of~$H$. 

\end{proof}


\subsection{Handle slides}
\label{sec27}
Handle slides in a handlebody are conveniently visualized
when `translated'
into a Whitehead diagram. Figure~\ref{fig01} shows the
effect of sliding the handle corresponding to the disk $x_2$ along the
handle corresponding to $x_1$.
\begin{figure}
\centering
\includegraphics[height=120mm]{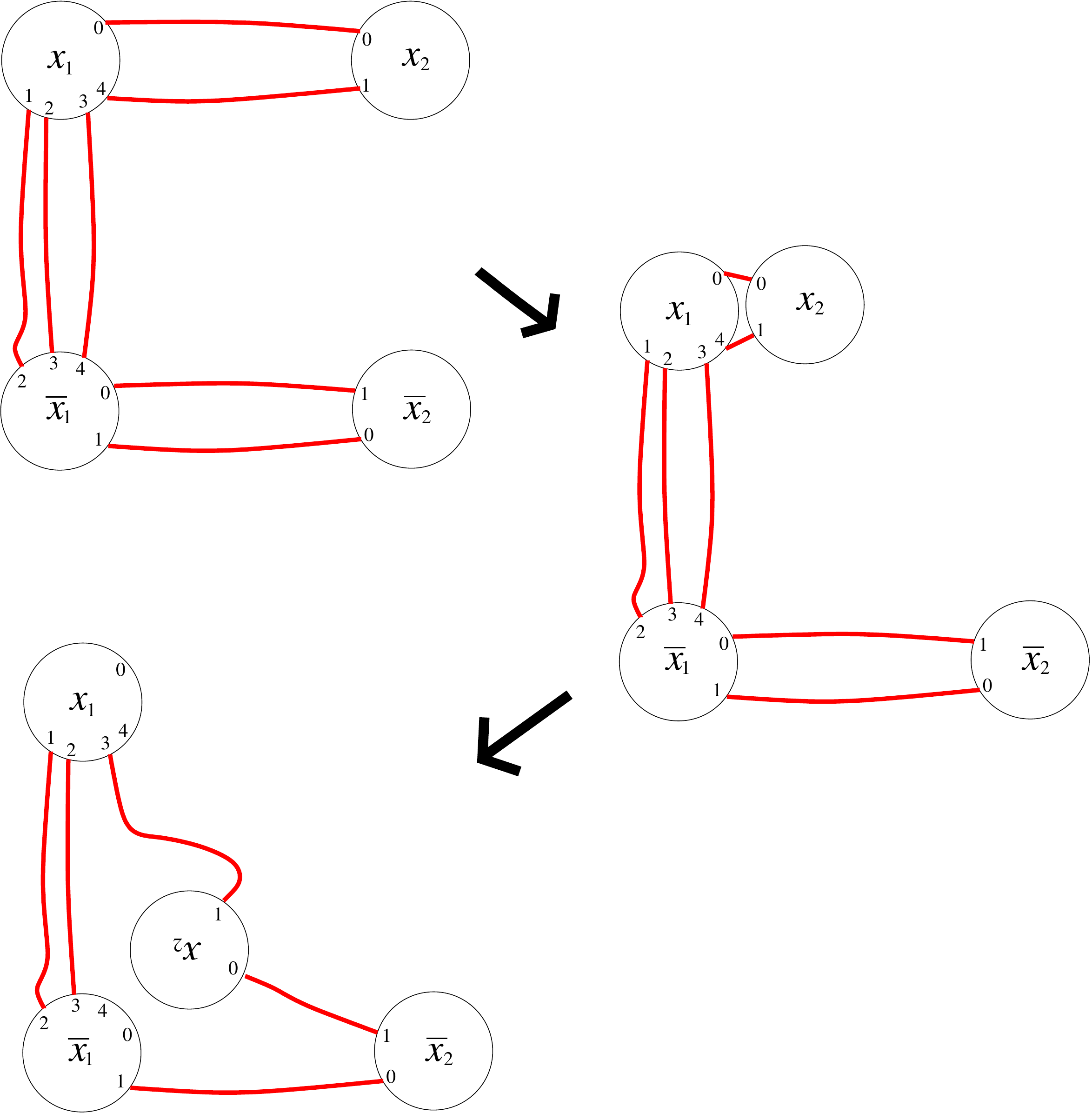}
\caption{A handle slide.}
\label{fig01}
\end{figure}
But, of course, in the final step, the meridional disks $x_1,\bar
x_1,x_2,\bar x_2$ in the drawing are no longer the same disks, but
are their images after the handle slide in the handlebody (The effect of
such a handle slide in the fundamental group of the handlebody is a
\emph{Whitehead automorphism}. See~\cite{stallings}).

\subsubsection{$\partial$--parallel arcs in handlebodies}
\label{sec271}

Let $k\subset S^3$ be a knot, and let $F\subset E(k)$ be a free Seifert
surface for~$k$. Also let $\Gamma$ be a spine for $F$ on $\partial
E(F)$. In Remark~\ref{remark23}~(2) a program is outlined   to
construct a circular decomposition for $E(k)$. It starts by drilling
some 2--handles out of~$E(F)$ disjoint with $F$. A
2--handle $P\subset E(F)$ is a product~$P=D^2\times I$ 
such that~$(D^2\times I)\cap \partial E(F)=D^2\times\{0,1\}$,
and it is determined by its `co-core' 
$\gamma=\{0\}\times I$. This co-core, $\gamma$, can be
visualized in $E(F)$  as a properly embedded arc with ends disjoint with
$\Gamma$.

Given two 
properly embedded arcs $\gamma$ and $\gamma'$ in $E(F)$ 
disjoint with $\Gamma$, if the triples $(E(F),\Gamma,\gamma)$ and
$(E(F),\Gamma,\gamma')$ are homeomorphic, then the pairs
$(E(\gamma),\Gamma)$ and $(E(\gamma'),\Gamma)$ are homeomorphic, and,
therefore, have homeomorphic handle decompositions. In this sense, we
say that $\gamma$ and $\gamma'$ \emph{induce homeomorphic handle
  decompositions} of $(E(F),\Gamma)$. Also we say, as an abuse of
language, that $\gamma$ and $\gamma'$ are \emph{equivalent 2--handles}.

Let $k$ be a
knot with $h(k)=1$, and let $F\subset E(k)$ be a free Seifert
surface for~$k$ which realizes a one-handled circular decomposition
of $E(F)$. 
Let $\gamma\subset E(F)$ be a properly embedded arc disjoint
with~$F\times\{0\}$. If the arc~$\gamma$ is the co-core of the single
2--handle of the 
one-handled circular decomposition
of $E(F)$,
then $\gamma$ is called
\emph{the arc of the handle decomposition}. Note that in this case, we know
that $\gamma$ is parallel into~$\partial E(F)$ (see
Corollary~\ref{coro43} below).

\subsubsection{Criterion for one-handledness}
We will establish a criterion to determine if an arc is the arc of some
one-handled decomposition.

Let $k$ be a
knot with $h(k)=1$, and let $F\subset E(k)$ be a free Seifert
surface for~$k$ which realizes a one-handled circular decomposition
of $E(F)$. 
Let $\gamma\subset E(F)$ be a $\partial$--parallel properly embedded
arc disjoint 
with~$F\times\{0\}$.

Consider  a system of meridional disks $x_1,\dots,x_g\subset E(F)$.
Let $z$
be a $\partial$--parallelism disk for $\gamma$.
After an isotopy of $E(F)$ which keeps $\Gamma$ fixed point-wise, we
may assume that $z$ is disjoint with the disks~$x_1,\dots,x_g$.
Then
$\gamma$ can be visualized 
in the Whitehead diagram of $(E(F),\Gamma)$, with respect
to~$x_1,\dots,x_g\subset E(F)$, 
as a properly embedded arc in $E(x_1\cup\cdots\cup x_g)$ disjoint
with~$G$, where $G$ is the corresponding Whitehead graph. 
After drilling out
the 2--handle, which is a regular neighbourhood of $\gamma$, we are
`adding a new handle' to~$E(F)$; that is, the exterior
$E(\gamma)\subset E(F)$ is homeomorphic to $E(F)$ plus one 1--handle.
We obtain
a new Whitehead diagram for $(E(\gamma), \Gamma)$ with respect to
$x_1,\dots,x_g,z$,
adding two fat vertices $z$ and $\bar z$ as in Figure~\ref{fig02}. 
\begin{figure}
\centering
\includegraphics[height=60mm]{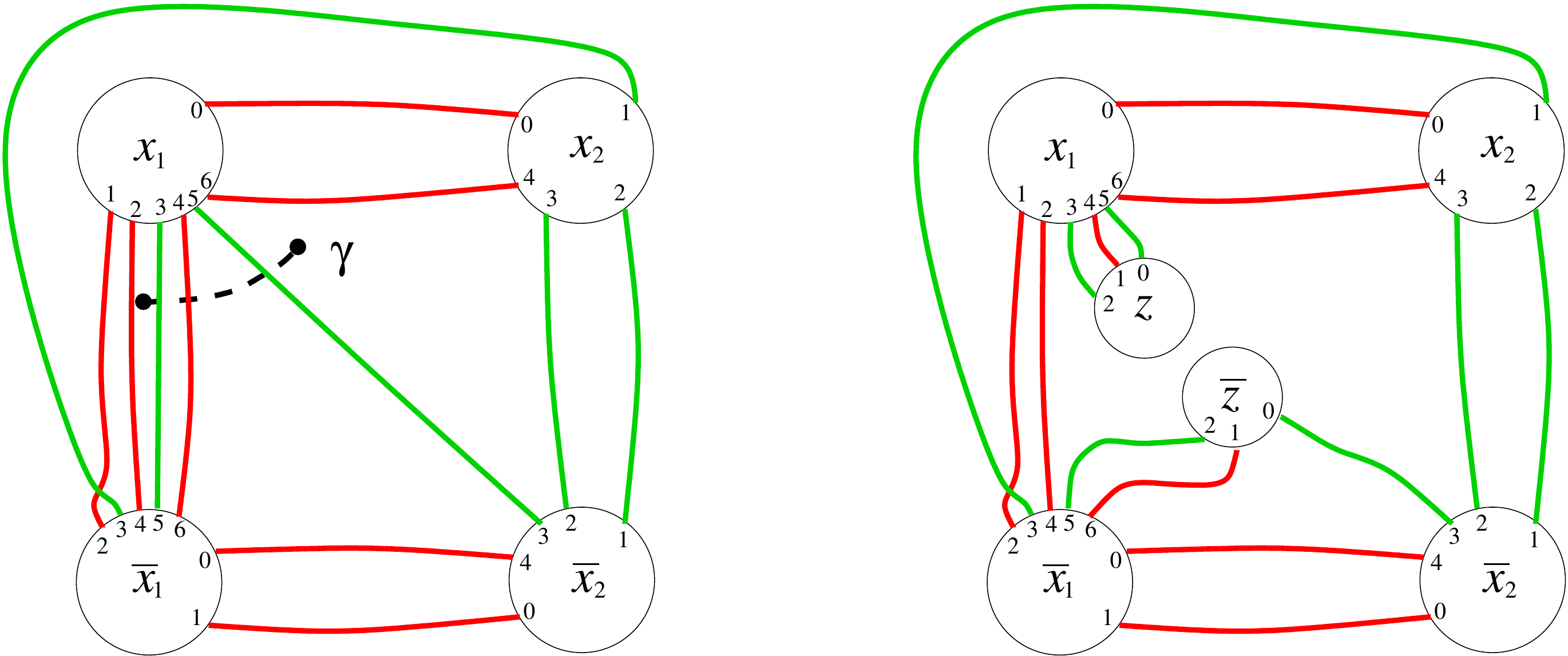}
\caption{Drilling out a 2--handle.}
\label{fig02}
\end{figure}

Define the complexity of a Whitehead graph as the sum of all valences
of the fat vertices of the graph.
The new Whitehead diagram obtained in the last paragraph may contain a
cut vertex 
$v$. For example, $v=x_1$ in Figure~\ref{fig02}. When there is a cut 
vertex $v$ in $G$, this vertex decomposes the graph $G$ 
into two non-trivial graphs $X_1$ and $X_2$. One of these graphs, say
$X_1$, does not contain $\bar v$. Then we can slide the
part corresponding to graph $X_1$ along the handle defined by disk~$v$.
If, after sliding, there appear cut vertices, we
continue sliding along some cut vertex on and on.
See Figures~\ref{regneigh} and~\ref{regneighslide}.
Since each such handle slide
lowers the complexity of the graph, eventually
we end up with, either:
\begin{enumerate}

\item \emph{A disconnected diagram}, or

\item \emph{A connected diagram with no cut vertices}.
  %
\end{enumerate}

In case (1),  see the last drawing of
  Figure~\ref{regneighslide}, there are obvious essential
  disks in $E(\gamma)$ disjoint with $\Gamma$ (more precisely, disjoint with
  the \emph{image} of $\Gamma$  on the diagram
  after the slides); the boundary of these
  essential disks are curves that separate the components of the
  current Whitehead
  graph. Assume a neighbourhood of one of these disks is a 1--handle~$B$ inside
  $E(\gamma)$ such that, after drilling out $B$,
  $E(\gamma\cup B)$  is a regular neighbourhood of
  $F=F\times\{0\}$. See the last drawing in 
  Figure~\ref{regneighslide} where the disk labeled $x_1$ corresponds
  to $B$. Then we have found a circular
  one-handled  decomposition of $E(k)$ based on $F$ according to the
  program outlined in 
   Remark~\ref{remark23}~(2), and $\gamma$ is the arc of this handle
   decomposition.  
Otherwise, we have to
  restart the program choosing a different arc to drill out.
\begin{figure}
\centering
\includegraphics[height=60mm]{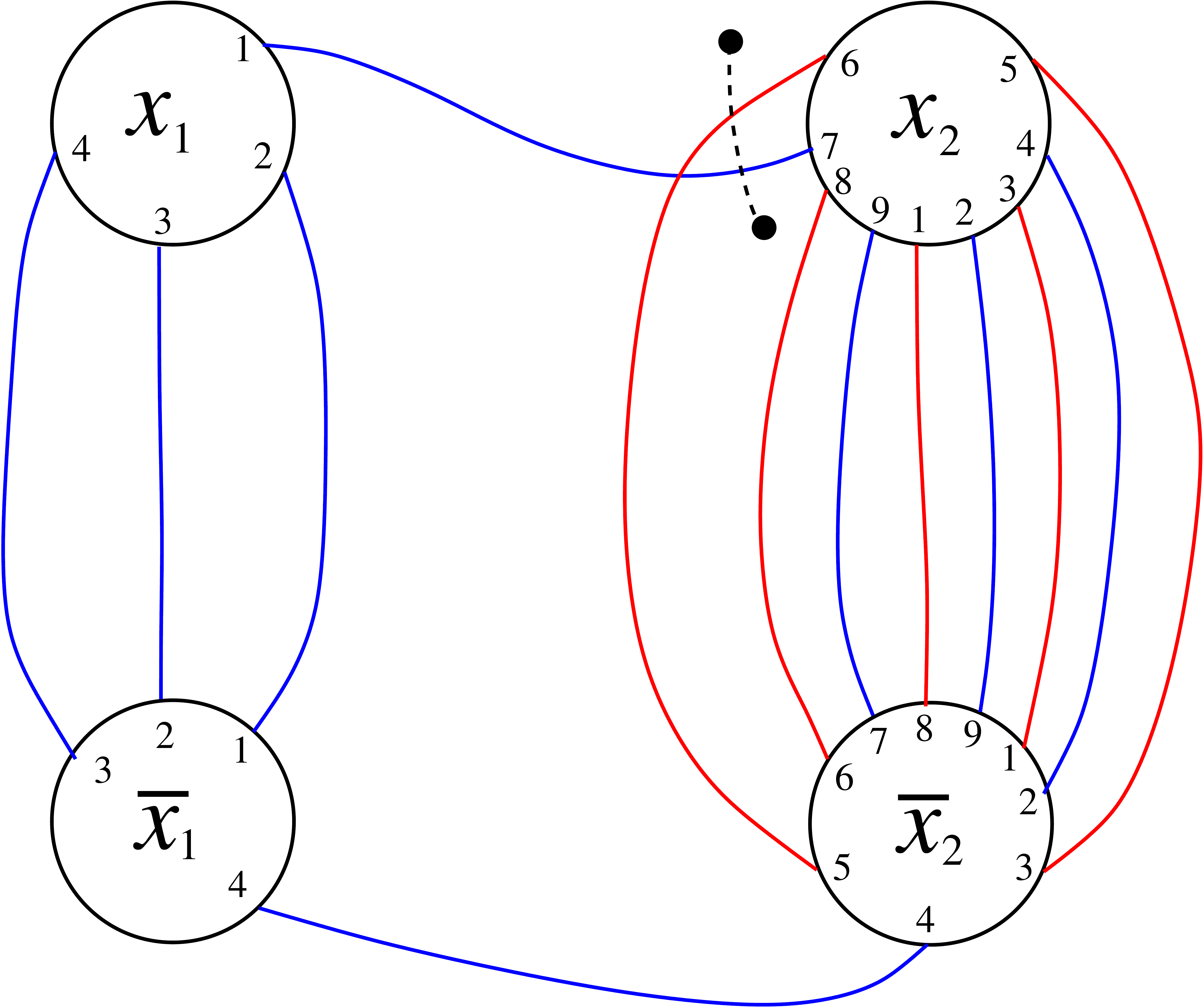}
\caption{}
\label{regneigh}
\end{figure}
\begin{figure}
\centering
\includegraphics[height=170mm]{example27slide.pdf}
\caption{}
\label{regneighslide}
\end{figure}

In Case~(2),
  by Corollary~\ref{coro29}, the chosen arc is not part of a one-handled
  circular decomposition.
Again, we have to
  restart the program choosing a different arc to drill out.

\subsubsection{Some definitions}
Now let $\gamma$ and $\gamma'$ be two $\partial$--parallel properly
embedded arcs in $E(F)$ disjoint with~$\Gamma$, with
$\partial$--parallelism disks $z$ and $z'$, respectively; let
$\{x_1,\dots,x_g\}$ be a meridional system of disks for $E(F)$, and let
$G$ be the corresponding Whitehead graph with respect to this system
of disks. Then,
by an isotopy of $E(F)$, we may assume that $z$ and $z'$ are
contained in $E(x_1\cup\cdots\cup x_g)$ and (the images of) $\gamma$ and
$\gamma'$ are disjoint with $G$. 

Assume that for two faces of $G$, that is, two connected components
$A,B\subset \partial E(x_1\cup\cdots\cup x_g)-G$, the face $A$ contains
an endpoint of $\gamma$ and one of $\gamma'$, and the face $B$ contains
the other two endpoints of $\gamma$ and $\gamma'$. Then there is an
isotopy of~$E(x_1\cup\cdots\cup x_g)$ that fixes point-wise $G$ and sends
$\gamma$ onto 
$\gamma'$. Such an isotopy exists for, being $\gamma$ and $\gamma'$
$\partial$--parallel, they are unknotted properly embedded arcs in the
3--ball $E(x_1\cup\cdots\cup x_g)$, and the isotopy can be chosen to fix $G$,
for the endpoints of the arcs are, by pairs, in components of $\partial
E(x_1\cup\dots\cup x_g)-G$. Then we see that a class of `equivalent'
2--handles in the Whitehead diagram of $(E(F),\Gamma)$ with respect
to~$x_1,\dots,x_g$ is determined by pairs of faces of $G$ in $\partial 
E(x_1\cup\cdots\cup x_g)$ (and conversely).
That is, for $\partial$--parallel properly embedded arcs
$\gamma,\gamma'\subset E(x_1\cup\cdots\cup x_g)$, the triples
$(E(x_1\cup\cdots\cup x_g),G,\gamma)$ and $(E(x_1\cup\cdots\cup
x_g),G,\gamma')$ are homeomorphic if and only if $\gamma$ and
$\gamma'$ connect the same pair of faces of $G$.

This is a very useful fact. To search for a one-handled decomposition,
one must only test a finite number of $\partial$--parallel arcs in
some Whitehead diagram, and analyze as above: there are as many
$\partial$--parallel arcs to check as pairs of faces
of the corresponding Whitehead graph.

We end this section with some definitions. 
Assume the arc~$\gamma$ is
boundary parallel into~$\partial E(F)$. 
Let $z$ be a $\partial$--parallelism disk for $\gamma$ such
that~$\partial z= \gamma \cup \gamma^B_z$, where~$\gamma^B_z$ is an
arc in $\partial E(F)$. 
Then, after a small isotopy of $z$, if necessary, $\gamma^B_z$
intersects the edges of $\Gamma$ transversely in a finite number of
points. If
$e_1,\dots,e_n$ are the edges of $\Gamma$ that intersect $\gamma^B_z$
and each $e_i$ intersects only once with $\gamma^B_z$, we say that
$\gamma$ \emph{encircles the edges} $e_1,\dots,e_n$. If $\gamma$ encircles the
edges $e_1,\dots,e_n$, and all $e_i$ are incident in the vertex $\xi$
of $\Gamma$, we say that the arc $\gamma$ \emph{is around the vertex $\xi$}. 
Notice that if $e_1,\dots,e_n,e_{n+1},\dots,e_{n+m}$ are all the edges
incident in the vertex $\xi$ of $\Gamma$, and~$\gamma$ is around
vertex $\xi$ encircling the edges $e_1,\dots,e_n$, then $\gamma$ also
encircles the edges~$e_{n+1},\dots,e_{n+m}$.  
The
\emph{length} of $\gamma$ in~$\Gamma$ is
the minimal number of  intersection points of $\gamma^B_z$ and
$\Gamma$ among all $\partial$--parallelism disks $z$ for $\gamma$. 

\section{Primitive elements in spines}
\label{sec3}

Let $\mathcal{F}$ be a free group. An element $x\in \mathcal{F}$ is called
\emph{primitive} if $x$ is part of some basis of $\mathcal{F}$. A set of
elements $x_1,x_2,\dots,x_k\in \mathcal{F}$ are called \emph{associated
  primitive elements} if they are contained in some basis of $\mathcal{F}$.

Let $H$ be a genus $g$ handlebody. 
A simple closed curve
$\alpha\subset H$ represents a primitive element in $\pi_1(H)$ if and
only if there is an essential properly embedded disk $D\subset H$ such
that $\alpha\cap D$ consists of a single point. A set of simple closed
curves $\alpha_1,\dots,\alpha_k\subset H$ represent a set of
associated primitive elements in $\pi_1(H)$ if and only if there is a system of
meridional disks $D_1,D_2,\dots,D_g\subset
H$ such that, up to renumbering, $\alpha_i\cap D_i$ consists of a
single point, and $\alpha_i\cap D_j=\emptyset$ for $i\neq
j$,~$i=1,\dots,k$, and $j=1,\dots,g$.

\begin{theorem}
Let $k\subset S^3$ be a knot, and let $F\subset E(k)$ be a free
Seifert surface for $k$. Assume $E(F)$ is a handlebody of genus $g$.  

If there exists a graph $\Gamma=a_1\vee\cdots\vee
a_g$ such that $\Gamma$ is
a spine for $F$ on $\partial E(F)$, and the $\ell$ curves $a_1,\dots,a_\ell$
represent associated 
primitive elements of $\pi_1(E(F))$,
then the handle number $h(F)\leq g-\ell$.
\label{thm31}

\end{theorem}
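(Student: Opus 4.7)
The plan is to construct, following Remark~\ref{remark23}(\ref{dos}), a circular handle decomposition of $E(k)$ based on $F$ with exactly $g-\ell$ $1$-handles and $g-\ell$ $2$-handles. The hypothesis provides a system of meridional disks $D_1,\ldots,D_g$ of the handlebody $E(F)$ such that $|a_i\cap D_i|=1$ and $a_i\cap D_j=\emptyset$ whenever $i\leq\ell$ and $j\neq i$; I fix such a system, together with the product structure $\mathcal{N}(F)=F\times I$ with $\Gamma\subset F\times\{0\}$.

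For each $j=\ell+1,\ldots,g$, I would choose a properly embedded $\partial$--parallel arc $\gamma_j\subset E(F)$ disjoint from $\Gamma$, sitting around a vertex of $\Gamma$ on the edge $a_j$ and with $\partial$--parallelism disk encircling only edges among $a_{\ell+1},\ldots,a_g$ (in the sense of Section~\ref{sec271}). The primitive structure of the $D_i$ with $i\leq\ell$ decouples the primitive edges from the Whitehead vertices $D_j^{\pm}$ for $j>\ell$, which makes it possible to choose the $\gamma_j$ pairwise disjoint and with the desired encircling property. Drilling regular neighborhoods of $\gamma_{\ell+1},\ldots,\gamma_g$ out of $E(F)$ produces the $g-\ell$ $2$--handles of the decomposition. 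In the enlarged handlebody $E_1:=E(\gamma_{\ell+1}\cup\cdots\cup\gamma_g)$, the disks $D_{\ell+1},\ldots,D_g$ can be modified through the newly drilled tunnels into compressing disks $\Delta_{\ell+1},\ldots,\Delta_g$ disjoint from $\Gamma$; drilling neighborhoods of these disks provides the $g-\ell$ $1$--handles.

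The heart of the argument is verifying that after all drillings, the residual manifold is a regular neighborhood of $F\times\{0\}$. I would carry this out via the Whitehead-diagram criterion of Section~\ref{sec27} together with Corollary~\ref{coro29}: one must show that the Whitehead diagram of $(E(F),\Gamma)$ with respect to $\{D_1,\ldots,D_g\}$, after adding the $g-\ell$ pairs of fat vertices coming from the drilled arcs and a suitable sequence of handle slides, disconnects into $g$ canonical pieces, one per spine circle. The primitive sub-diagram associated to $a_1,\ldots,a_\ell$ is already in canonical form from the start (each vertex pair $D_i^{\pm}$ for $i\leq\ell$ carries only the two sub-arcs of $a_i$), while the drillings and slides take care of the non-primitive sub-diagram.

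The main obstacle is the coordinated choice of the arcs $\gamma_j$ and the compressing disks $\Delta_j$, together with the handle-slide verification. I expect the cleanest way to close the argument is an induction on $g-\ell$, with base case $g-\ell=0$, i.e., $\ell=g$, where $a_1,\ldots,a_g$ form an associated basis of $\pi_1(E(F))$ and hence $\Gamma$ is simultaneously a spine for $F$ and for $E(F)$; this base case is exactly the fibered criterion of Remark~\ref{remark27}, which forces $h(F)=0=g-\ell$.
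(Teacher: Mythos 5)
Your overall plan is the same as the paper's: follow part~(\ref{dos}) of Remark~\ref{remark23}, drill out $g-\ell$ 2--handles whose co-cores are short $\partial$--parallel arcs encircling the non-primitive circles $a_{\ell+1},\dots,a_g$ near the wedge point, and then drill out $g-\ell$ 1--handles coming from the complementary meridional disks $D_{\ell+1},\dots,D_g$. (A small aside: no primitivity is needed to choose the arcs $\gamma_j$ pairwise disjoint with the stated encircling property; they are just small arcs around each loop next to the base point. The hypothesis enters only later.) The genuine gap sits exactly where you place ``the heart of the argument'': you never verify that after the two rounds of drilling the residual manifold is a regular neighbourhood of $F\times\{0\}$, and the tools you propose would not deliver that verification. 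Corollary~\ref{coro29} is a \emph{negative} criterion --- it certifies that a graph meets every essential disk and is used in the paper to rule decompositions out, not to confirm one --- and ``the diagram disconnects into $g$ pieces'' is in any case not sufficient: you must show that each $a_i$ becomes geometrically dual to exactly one disk of a complete meridian system of the drilled handlebody and disjoint from the others, and you exhibit neither the modified disks $\Delta_j$ nor any sequence of slides achieving this. The induction on $g-\ell$ is likewise only announced: as stated the theorem is about Seifert surfaces in $S^3$, so an induction would require a reformulation for pairs (handlebody, spine) together with an inductive step, neither of which you set up; your base case $\ell=g$ is in the paper a \emph{corollary} of the theorem (Corollary~\ref{coro33}), not an ingredient of its proof.

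The paper closes the missing step with a short direct geometric argument you could adopt. Take tubes $T_i=\mathcal{N}(a_i)$ meeting pairwise in a prism $P=\mathcal{N}(x_0)$, let $d_i^{+}$ be a meridian disk of $\widehat T_i=\overline{T_i-P}$ at $P$ with $\partial d_i^{+}=\beta_i^{B}\cup\beta_i^{I}$, and use the arcs $\beta_i^{I}$, $i=\ell+1,\dots,g$, as the 2--handle co-cores. Since $E(\beta_i^{I})\cong\overline{E(F)-\mathcal{N}(A_i)}$, where $A_i$ is the outer disk of $\partial T_i$, drilling them all out yields $V\cup(g-\ell\ \text{1--handles})$, where $V=\overline{E(F)-\bigcup_{i>\ell}\widehat T_i}$ is again a genus $g$ handlebody and the 1--handles are the balls $\widehat T_i$ attached along $d_i^{\pm}$. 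Only now is the hypothesis used, and it is used geometrically: because $|a_i\cap D_i|=1$ and $a_i\cap D_j=\emptyset$ for $i\leq\ell$, $j\neq i$, cutting $V$ along the disks $D_j\cap V$, $j>\ell$, leaves a regular neighbourhood of $a_1\vee\cdots\vee a_\ell$, so the total complement of all the drillings is a regular neighbourhood of $\Gamma$, hence of $F\times\{0\}$, and $h(F)\leq g-\ell$ follows. No Whitehead-diagram combinatorics is needed at this point; if you wish to keep your diagrammatic route, you must replace the appeal to Corollary~\ref{coro29} by an explicit verification that the slid diagram is the canonical one for a wedge of $g$ circles dual to a complete disk system.
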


\begin{proof} 
We follow the plan in
  Remark~\ref{remark23}~(\ref{dos}): we will exhibit a system of
  properly embedded arcs (the arcs $\beta_j^I$, below) which are the
  co-cores of $g-\ell$ 2--handles to be drilled out of $E(F)$, and
  a system of $g-\ell$ 2--disks ($D_{\ell+1},\dots, D_g$, below) which
  define the co-cores of $g-\ell$ 1--handles to be drilled out of
  $E(F\cup \bigcup_j \beta_j^I)$

Let $D_1,D_2,\dots,D_g\subset E(F)$ be a system of meridional disks
for $E(F)$ such that~$|a_i\cap D_i|=1$, and $a_i\cap D_j=\emptyset$
for $i\neq j$, $i=1,\dots,\ell$, and $j=1,\dots,g$.
This system of meridional disks exists, since $a_1,\dots,a_\ell$
represent associated 
primitive elements of~$\pi_1(E(F))$. 

Let $P\subset E(F)$ be a regular
neighbourhood of the base point $x_0\in\partial E(F)$ ($x_0$ is also
the base point of the graph $\Gamma$). We visualize
$P$ as a $2g$--gonal prism. See Figure~\ref{fig1}. For $i=1,\dots,g$,
let $T_i$ be a regular neighbourhood of $a_i$ in $E(F)$ such that $T_i\cap
T_j=P$ if $i\neq j$.
Write $\widehat T_i=\overline{T_i-P}$; then $\widehat T_i$ is a
3--ball. The intersection,~$\widehat T_i\cap P=d_i^+\cup d_i^-$, is the
disjoint union of two 2--disks 
$d_i^+$ and $d_i^-$ (see Figure~\ref{fig1}). Also write $\partial
d_i^+=\beta_i^B\cup\beta_i^I$ where $\beta_i^B$ is an arc in $\partial
E(F)$, and $\beta_i^I$ is a properly embedded arc in $E(F)$. Finally,
write $A_i=\overline{\partial T_i -(d_i^+\cup d_i^-\cup\partial
  E(F))}$ which is a 2--disk.

The arcs $\beta^I_{\ell+1},\dots,\beta^I_{g}$ are the co-cores of
2--handles in $E(F)$ to be drilled out, according to the plan in
Remark~\ref{remark23}~(2): 

Notice that the exterior of each $\beta_i^I$,
$E(\beta_i^I)=\overline{E(F)-\mathcal{N}(\beta_i^I)}\cong\overline{ 
  E(F)-\mathcal{N}(A_i)}$, and this homeomorphism is the identity map
outside a small neighbourhood of $A_i$.

Consider $V=\overline{E(F)-(\widehat T_{\ell+1}\cup\widehat
  T_{\ell+2}\cup\cdots\cup\widehat T_g)}$. Then $V$ is a genus $g$
handlebody and $E(F)$ is a regular neighbourhood of $V$. We see that
$$\overline{E(F)-\cup_{\ell+1}^g
  \mathcal{N}(\beta_i^I)}\cong\overline{E(F)-\cup_{\ell+1}^g
  \mathcal{N}(A_i)}\cong V\cup (g-\ell \text{ 1--handles })$$
where the $g-\ell$ 1--handles are the $g-\ell$ balls $\widehat T_i$
attached along the disks $d_i^+$,~$d_i^-$,~$i=\ell+1,\dots,g$.

By the choice of the disks $\{D_i\}$, we see that
$\overline{V-\cup_{\ell+1}^g\mathcal{N}(D_i\cap V)}$ is a regular
neighbourhood of $a_1\vee\cdots\vee a_\ell$. Then 
$\overline{E(F)-(\cup_{\ell+1}^g\mathcal{N}(\beta_i^I)+\cup_{\ell+1}^g\mathcal{N}(D_i\cap V))}$
is a regular neighbourhood of $\Gamma$. In other words,
$\mathcal{N}(F)\cup\{\mathcal{N}(\beta_i^I)|i=\ell+1,\dots,g\}\cup\{\mathcal{N}(D_i\cap
V)|i=\ell+1,\dots,g\}$ determines a circular handle decomposition of $E(k)$
based on~$F$, as in Remark~\ref{remark23}~(\ref{dos}). 
Therefore,~$h(F)\leq g-\ell$.

\end{proof}
\begin{figure}
\centering
\includegraphics[height=60mm]{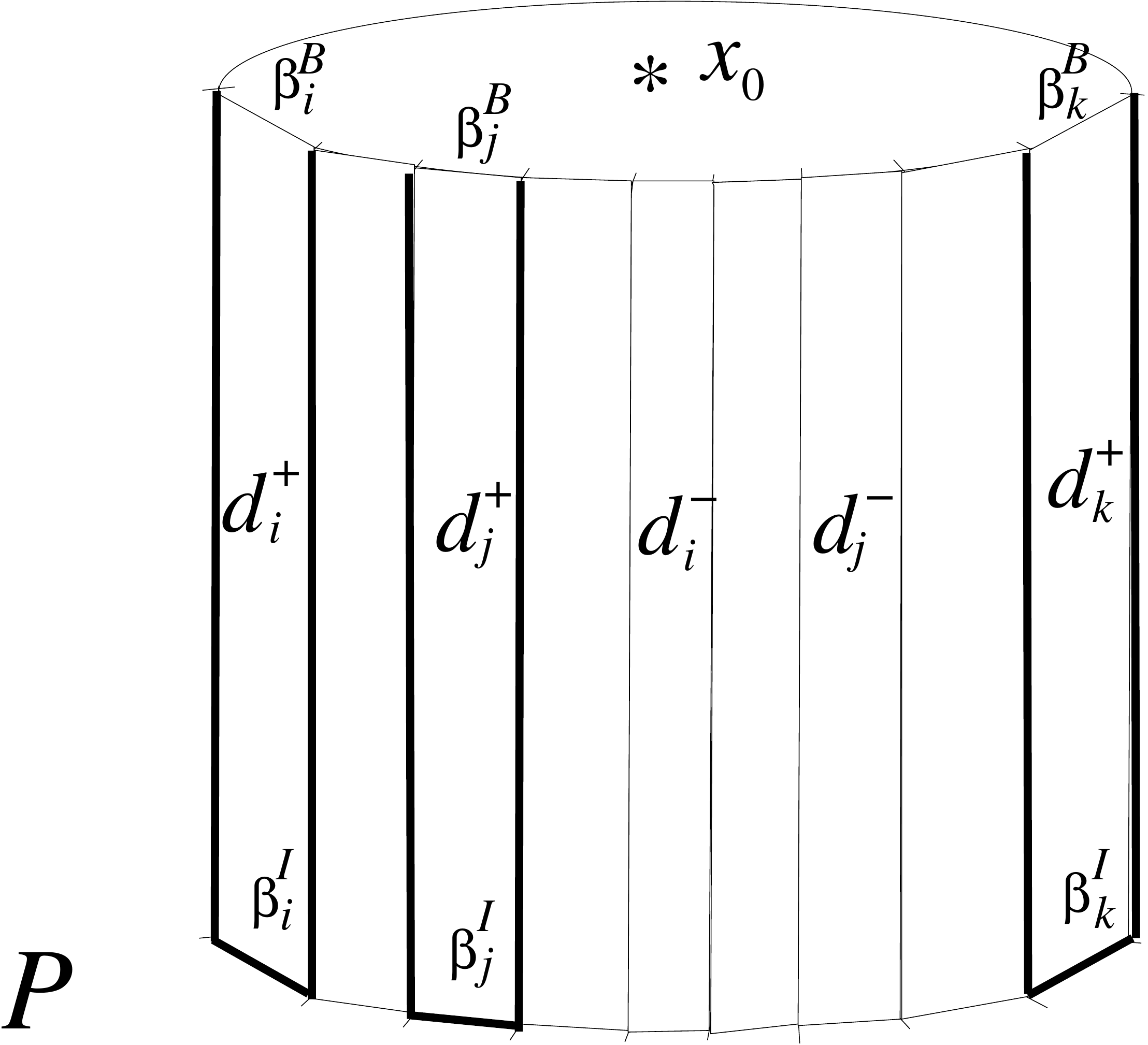}
\caption{The neighbourhood of $x_0$.}
\label{fig1}
\end{figure}

\begin{corollary}[The case ``$\ell=g$'']

Let $k\subset S^3$ be a knot, and let $F$ be a free
Seifert surface for $k$. Assume $E(F)$ is a handlebody of genus $g$.  

If there exists a graph $\Gamma=a_1\vee a_2\vee\cdots\vee
a_g$ such that $\Gamma$ is
a spine for $F$ on~$\partial E(F)$, and the curves $a_1,\dots,a_g$
form a basis of 
$\pi_1(E(F))$, then $k$ is a fibered knot with fiber $F$.
\label{coro33}

\end{corollary}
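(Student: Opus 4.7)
The plan is to derive this corollary directly from Theorem~\ref{thm31}. The hypothesis that $a_1,\dots,a_g$ form a basis of $\pi_1(E(F))$ is exactly the statement that they are $g$ associated primitive elements, so Theorem~\ref{thm31} applies with $\ell = g$, yielding $h(F) \leq g - g = 0$. Hence $h(F) = 0$.

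Next I would unpack what $h(F) = 0$ means. By the definition in Section~\ref{sec24}, there is a circular handle decomposition $E(k) = (F \times I) \cup B \cup P$ based on $F$ in which $B$ is empty; since in any circular decomposition the number of $1$--handles equals the number of $2$--handles, $P$ is empty as well. Thus the decomposition reduces to identifying $F \times \{0\}$ with $F \times \{1\}$ via some homeomorphism of $F$, so $E(k)$ is the mapping torus of a self-homeomorphism of $F$. In other words, $k$ is a fibered knot with fiber $F$.

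Alternatively, one could bypass Theorem~\ref{thm31} and argue via Remark~\ref{remark27}: since $E(F)$ is a genus $g$ handlebody and $a_1,\dots,a_g$ represent a basis of $\pi_1(E(F))$, the wedge $\Gamma = a_1 \vee \cdots \vee a_g$ is a spine of $E(F)$; by hypothesis it is also a spine of $F$ on $\partial E(F)$, and Remark~\ref{remark27} then gives directly that $k$ is fibered with fiber $F$. This route actually makes the corollary essentially a tautology and the inequality in Theorem~\ref{thm31} unnecessary, but since the corollary is listed as a consequence of the theorem, the first approach is preferable.

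I do not anticipate any real obstacle here: once Theorem~\ref{thm31} is in hand, the only thing to justify is that a circular handle decomposition with zero handles of each index forces the ambient manifold to be a mapping torus, which is immediate from the definition. The substance of the corollary lies entirely in Theorem~\ref{thm31}.
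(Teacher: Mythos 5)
Your proposal is correct and follows the paper's own route: apply Theorem~\ref{thm31} with $\ell=g$ to get $h(F)=0$, and then observe that a decomposition with no handles forces $E(F)\cong F\times I$ (equivalently, $E(k)$ is a mapping torus), so $k$ is fibered with fiber $F$. Your alternative via Remark~\ref{remark27} is in fact how the paper justifies the phrase ``product structure induced by $\Gamma$,'' so both of your routes match the published argument.
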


\begin{proof}
In this case $h(F)=0$, therefore, $E(F)$ admits a product structure
$E(F)=F\times I$
induced by $\Gamma$, and $k$ is fibered with fiber $F$.
\end{proof}

\begin{corollary}[The case ``$\ell=0$'']

Let $k\subset S^3$ be a knot, and let $F\subset E(k)$ be a free
Seifert surface for $k$. Assume $E(F)$ is a handlebody of genus $g$.  

Then $h(k)\leq g$.
\label{coro34}

\end{corollary}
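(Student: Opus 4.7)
The plan is to derive this as the specialization of Theorem~\ref{thm31} to the case $\ell=0$, where the primitivity hypothesis on the spine curves becomes vacuous.

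First I would produce a spine $\Gamma=a_1\vee\cdots\vee a_g$ for $F$ on $\partial E(F)$. Since $F$ is a compact surface with non-empty boundary $\partial F=k$, it admits a spine which is a wedge of circles, and, in the sense of Section~\ref{sec24}, this spine may be taken to lie in $\partial E(F)$. The count of circles is fixed by the genus of the handlebody $\mathcal{N}(F)\cong F\times I$, whose boundary coincides with $\partial E(F)$; since $E(F)$ is by hypothesis a handlebody of genus $g$ and two handlebodies sharing the same closed boundary surface must share the same handlebody genus, $\mathcal{N}(F)$ has genus $g$, and therefore any spine of $F$ is a wedge of exactly $g$ circles.

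With this choice of $\Gamma$ and $\ell=0$, the hypothesis of Theorem~\ref{thm31} is satisfied vacuously, since there are no curves on which to impose the associated-primitive condition. Theorem~\ref{thm31} then yields $h(F)\leq g-\ell=g$. Because $h(k)$ is by definition the minimum of $h(G)$ over all Seifert surfaces $G$ for $k$, this gives $h(k)\leq h(F)\leq g$, as desired.

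There is essentially no obstacle here: all the real work is packaged inside Theorem~\ref{thm31}, and Corollary~\ref{coro34} merely records the weakest bound it produces — the bound one gets from a free Seifert surface when no information about primitivity of spine curves is available.
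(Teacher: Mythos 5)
Your proposal is correct and is essentially the paper's own argument: Corollary~\ref{coro34} is obtained by invoking Theorem~\ref{thm31} with $\ell=0$, so that the primitivity hypothesis is vacuous and $h(k)\leq h(F)\leq g$. Your extra paragraph justifying that $F$ has a wedge-of-circles spine with exactly $g$ circles is a reasonable (and harmless) elaboration of what the paper leaves implicit.
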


\begin{proof}
By Theorem~\ref{thm31}, considering $\ell=0$, we have $h(F)\leq g$.
Therefore, $h(k)\leq g$. 
\end{proof}

\begin{remark}
Corollary~\ref{coro34} asserts that for a
connected knot $k$, $h(k)\leq 2g_f(k)$. See~\cite{rurru} for
another proof of this fact (a fact called the `Free
Genus Estimate'  in~\cite{rurru}). 
\end{remark}

\begin{corollary}
If $k$ is a connected free genus one knot, then $h(k)=0,1$, or $2$.
\label{coro36}
\end{corollary}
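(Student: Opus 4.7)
The plan is to reduce the claim directly to Corollary~\ref{coro34}. The only thing to check is that when $k$ is connected and has a free genus one Seifert surface $F$, then $E(F)$ is a handlebody of genus exactly $2$.

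First, since $k$ is connected, $F$ is a connected orientable surface with a single boundary component; being of genus one, $F$ is a once-punctured torus, so $\chi(F)=-1$. Next, since $F$ is free, $E(F)$ is by definition a handlebody. To pin down its genus, observe that $\partial E(F)$ is obtained by doubling $F$ along its boundary, i.e.\ gluing two copies $F^+$ and $F^-$ of $F$ along the annulus $\partial F\times I\subset \partial E(k)$; hence $\chi(\partial E(F))=2\chi(F)=-2$, so $\partial E(F)$ is a closed orientable surface of genus $2$. Therefore $E(F)$ is a handlebody of genus $g=2$.

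Now apply Corollary~\ref{coro34} with $g=2$: we obtain $h(k)\leq h(F)\leq 2$. Combined with the trivial bound $h(k)\geq 0$ (from the definition of the handle number), this gives $h(k)\in\{0,1,2\}$, as required.

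There is no real obstacle here; the content of the corollary is entirely in the already-proved inequality $h(k)\leq g$ together with the Euler characteristic bookkeeping that identifies the genus of $E(F)$ as $2$ when $F$ is a once-punctured torus. The only mild point to note is that connectedness of $k$ is used to guarantee $F$ has a single boundary component, which is what makes $\partial E(F)$ a closed genus $2$ surface rather than something more complicated.
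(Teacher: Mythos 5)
Your proof is correct and follows the same route as the paper, which leaves the corollary as an immediate consequence of Corollary~\ref{coro34} (the case $\ell=0$ of Theorem~\ref{thm31}): a connected free genus one knot has a once-punctured-torus free Seifert surface $F$, so $E(F)$ is a genus two handlebody and $h(k)\leq 2$. Your Euler characteristic bookkeeping just makes explicit the genus computation the paper takes for granted.
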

\hfill $\Box$

\begin{remark} 
\label{remark37}
Let $k$ be a connected free genus one knot in~$S^3$ such that $k$ is not
fibered (that is, $k\neq3_1,4_1$). At this point we can give some
estimates for $cw(k)$:

If $k$ is almost fibered, it
follows from Corollary~\ref{coro36} that $cw(k)=4$, or
$cw(k)=6$. In any case, that is, if $k$ is almost fibered or not,
$cw(k)\leq6$.

If $k$ is not almost fibered, consider a circular decomposition
$E(k)=(F\times I)\cup B_1\cup P_1\cup B_2\cup P_2\cup\cdots\cup 
B_n\cup P_n$, $n>1$, and $B_i,P_i\neq\emptyset$, which realizes
$cw(k)$. Then there are Seifert surfaces for $k$,
$T_1,\dots,T_n=F,S_1,\dots,S_n\subset E(k)$,
such that~$S_i$ is obtained from $T_{i-1}$ by adding the
1-handles $B_i$, and $T_i$ is obtained from~$S_i$ by adding the
2-handles $P_i$, and
$cw(k)=(c(S_{\sigma(1)}),\dots,c(S_{\sigma(n)}))$
with~$c(S_{\sigma(1)})\geq\cdots\geq c(S_{\sigma(n)})$ where
$c(S)=1-\chi(S)$.  

Now all $T_i$ are incompressible (Remark~\ref{remark24}), and of genus one. For, if some~$T_j$ is of genus at least two, then $S_j$ is
of genus at least three, and the complexity~$c(S_{j})\geq6$. But then, since $n>1$,
$cw(k)=(c(S_{\sigma(1)}),\dots,c(S_{\sigma(n)}))>6$, a
contradiction. It follows that $cw(k)=(4,\dots,4)$.

That is, \emph{if $k$ is a connected non-fibered free genus one knot,
  then $cw(k)=4,6$, or $(4,\dots,4)$.}

\end{remark}

As it was mentioned in the Introduction, a connected non-fibered free genus one
knot in~$S^3$ is almost
fibered (Theorem~\ref{thm67} below). It follows that $cw(k)\in\{4,6\}$.

\begin{example}\textbf{Rational knots.} 
\label{ex38}
\emph{If $k\subset S^3$ is a non-fibered rational knot,
  then~$h(k)=1$. Also $cw(k)=4g(k)$ if $k$ is connected, and
  $cw(k)=4g(k)+1$ otherwise.}

Let $k\subset S^3$ be a
  rational knot. Then $k$ is encoded with a continued fraction of the form
  $[2b_1,2b_2,\dots,2b_g]$ where $g$ is even or odd if $k$ is connected
  or not, respectively. 
Here $b_1,\dots,b_g$ are non-zero integers .
  Now $k$ has a minimal genus Seifert surface~$F$ as
  in Figure~\ref{figRat1} (see \cite{gabai}, Answer~1.19).  This
  surface is free. Note that 
  $g(F)=g/2$ if $k$ is connected, and~$g(F)=(g-1)/2$ otherwise.
\begin{figure}
\centering
\includegraphics[height=50mm]{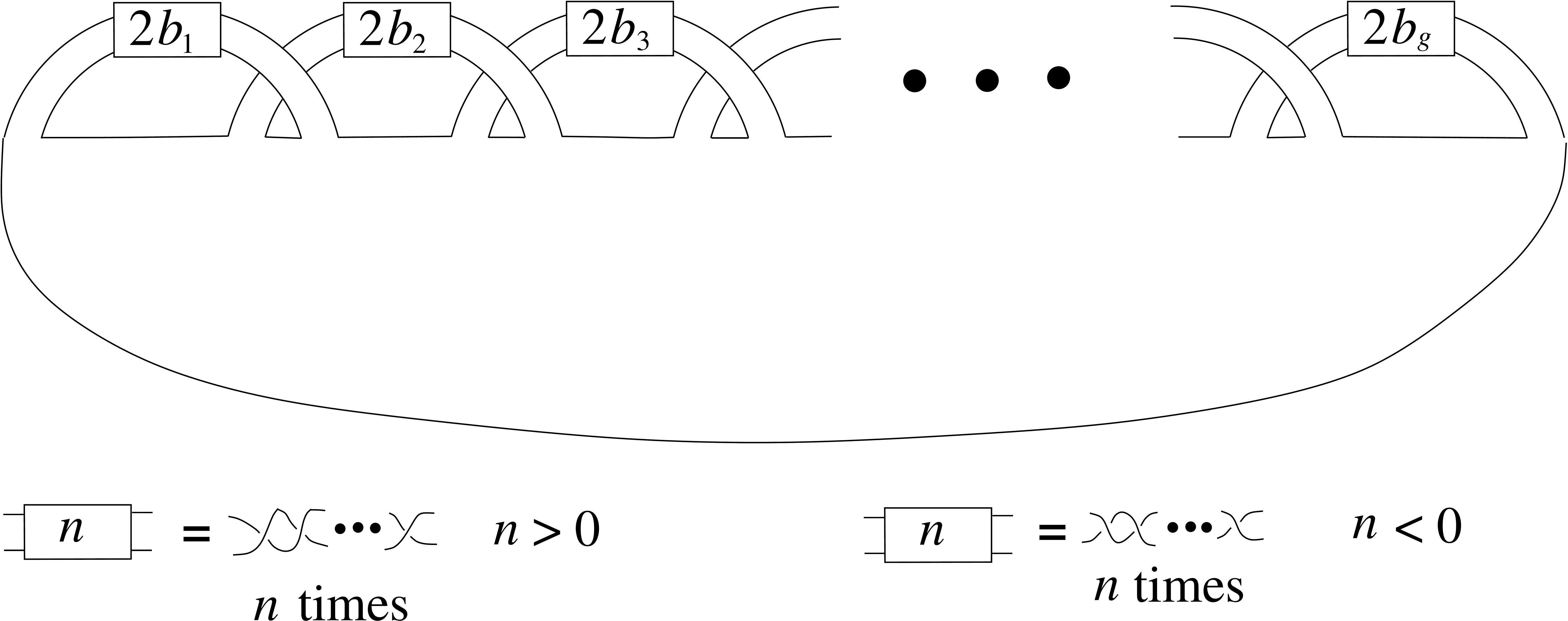}
\caption{A minimal genus Seifert surface for the knot $k=[2b_1,2b_2,\dots,2b_n]$.}
\label{figRat1}
\end{figure}

In a neighbourhood $V$ of this surface we can find a
spine~$\Gamma\subset F\times\{0\}\subset\partial V$ with~$\Gamma=a_1\vee 
a_2\vee\cdots\vee a_g$, as in
Figure~\ref{figRat2}. 
\begin{figure}
\centering
\includegraphics[height=50mm]{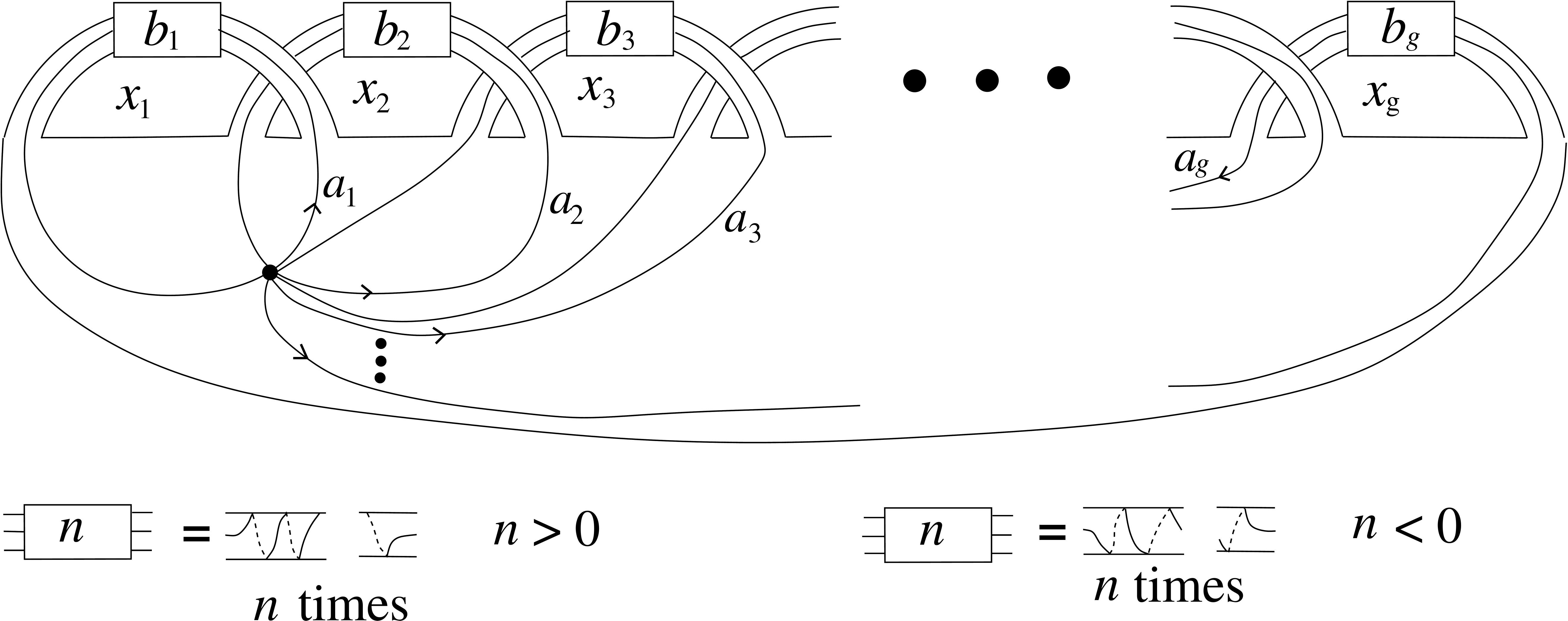}
\caption{A spine for $k=[2b_1,2b_2,\dots,2b_g]$ in $\partial\mathcal{N}(F)$.}
\label{figRat2}
\end{figure}
For the obvious meridional disks,
$x_1,x_2,\dots,x_g$, of the handlebody $E(F)$, corresponding to a basis
$\{x_1,x_2,\dots,x_g\}$ of~$\pi_1(E(F))$, the curves
$a_1,a_2,\dots,a_g$ represent the elements
$x_1^{b_1}$, $x_2^{b_2}x_1$, $x_3^{b_3}x_2\dots$,
$x_{g-1}^{b_{g-1}}x_{g-2}$, $x_g^{b_g}x_{g-1}$
of $\pi_1(E(F))$, respectively. 

If each $|b_i|=1$, then
$a_1,a_2,\dots,a_g$ represent a basis of~$\pi_1(E(F))$, and, by
Corollary~\ref{coro33}, $k$ is fibered with fiber $F$.

If some $|b_i|\geq2$, then
$\{x_g,x_2^{b_2}x_1,x_3^{b_3}x_2\dots,x_{g-1}^{b_{g-1}}x_{g-2},x_g^{b_g}x_{g-1}\}$
is a basis for $\pi_1(E(F))$; it follows that the curves
$a_2,a_3,\dots,a_g\subset \Gamma$
represent associated primitive elements of $\pi_1(E(F))$, and, by
Theorem~\ref{thm31}, $h(k)\leq h(F)=1$.
By the second part of the statement of Answer~1.19 of~\cite{gabai}, $k$
is not fibered. Therefore, $0<h(k)=h(F)=1$, and $cw(k)=2g$ if $k$ is
connected, and $cw(k)=2g+1$ otherwise.

\begin{remark}
In Theorem~3.21 of \cite{lagoda} it is claimed that the 
result in Example~\ref{ex38}, the one-handledness of rational knots, is
known, but unpublished. 
\end{remark}

\end{example}

\begin{example} 
\label{exPretzel}
  \textbf{Pretzel knots.} 
  \emph{The pretzel knot $k=P(\pm3,q,r)$ with $|q|,|r|$ odd integers
    $\geq3$, has $h(k)=1$ and, therefore, $cw(k)=4$.} 

  Let $k$ be the pretzel knot
  $P(p,q,r)$ with $p,q,r$ odd integers. Then $k$ is a connected knot,
  and the `black surface' $F$ of a standard projection of $k$ is a free
  genus one Seifert surface for $k$. See Figure~\ref{fig31teta}.
\begin{figure}
\centering
\includegraphics[height=60mm]{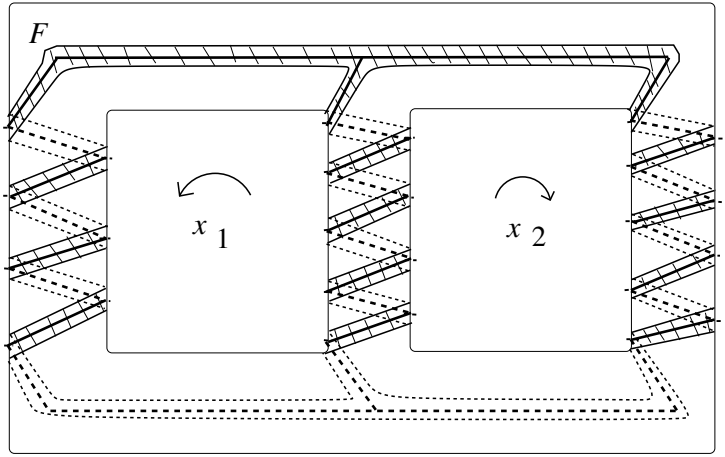}
\caption{Black surface  for $P(7,9,9)$.}
\label{fig31teta}
\end{figure}
If $|p|,|q|,|r|\geq3$, it is known that
(1)~$k$ has a unique incompressible Seifert surface (see
\cite{godaishi}), namely, the free black surface $F$
of genus one;
(2)~$k$ has tunnel number two (see \cite{saku-klimen}); 
(3)~$h(k)\leq2$ (see Corollary~\ref{coro36});
(4)~since~$t(k)\neq1$,~$k$ is not a
  rational knot;
(5)~ also $k$
  is not fibered (that is, $k\neq3_1,4_1$).

For any
  permutation $s,t,u$ of  
  $p,q,r$, the pair $(S^3, k)$ is homeomorphic to a pair
  $(S^3, \ell)$ where $\ell$ is a pretzel knot $P(s,t,u)$. 
Also, by a reflection, $P(p,q,r)$ is equivalent to $P(-p,-q,-r)$.
Then, by Remark~\ref{remark22}, we may assume that it holds either,
  \emph{Case~1}: ``$p,q,r>0$'', or \emph{Case~2}:~``$p<0$ and $q,r>0$''.

  There is  a spine shown in Figure~\ref{fig31teta} for the surface
  $F\times\{0\}\subset\partial\mathcal{N}(F)$. This spine is a
  $\theta$--graph. To obtain a wedge of circles as a spine
$\Gamma=a_1\vee a_2\subset F\times\{0\}\subset\partial\mathcal{N}(F)$,
 we slide the middle edge of the
$\theta$--graph to the left. In \emph{Case~1},~``$p,q,r>0$'', we obtain
the upper part of Figure~\ref{fig31}; 
and, in \emph{Case 2},~``$p<0$ and $q,r>0$'', after using an isotopy to avoid unnecessary
intersections of the curve 
$a_2$ with the disk $x_1$, we obtain the lower part of
Figure~\ref{fig31}.
\begin{figure}
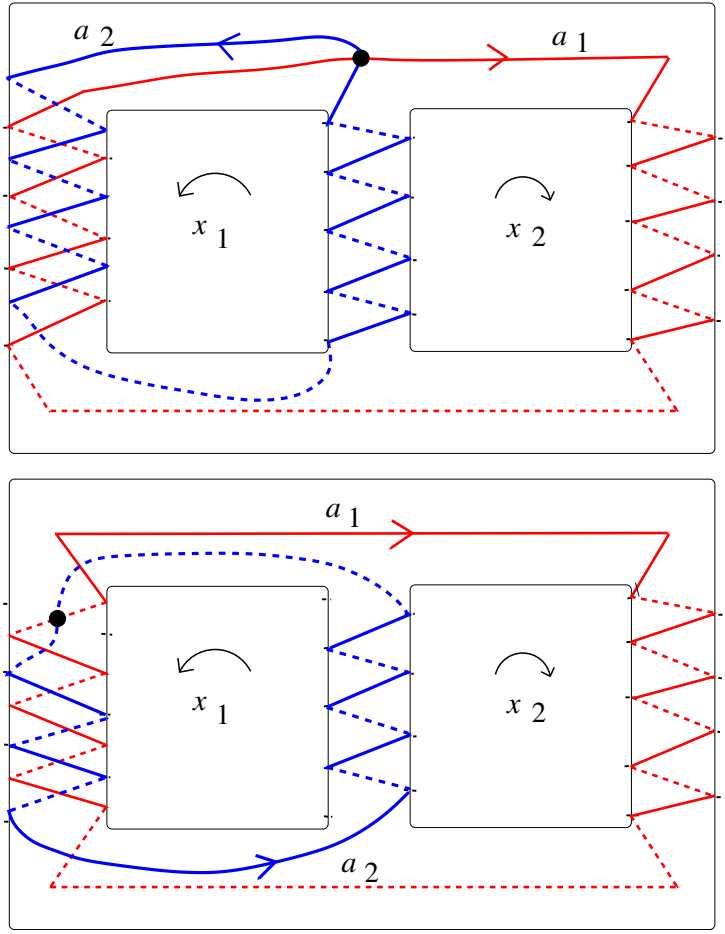

\centering
\includegraphics[height=60mm]{espinapos.pdf}
\vskip3true mm
\includegraphics[height=60mm]{espinaneg.pdf}
\caption{Spines for $P(p,q,r)$.}
\label{fig31}
\end{figure}
We see that, writing $\pi_1(E(F))\cong\langle x_1,x_2:-\rangle$:

  \emph{Case 1}, ($p,q,r>0$),
  the curves $a_1$ and $a_2$ 
  represent the elements
$x_2^{(r+1)/2}x_1^{-(p-1)/2}$ and $x_1^{(p+1)/2}(x_2x_1)^{(q-1)/2}$,
  respectively, in $\pi_1(E(F))$, or, 

  \emph{Case 2}, ($p<0$ and $q,r>0$), the curves $a_1$ and $a_2$ 
  represent the ele\-ments $x_2^{(r+1)/2}x_1^{(|p|+1)/2}$ and
 $x_1^{-(|p|-3)/2}(x_2x_1)^{(q-3)/2}x_2$,
res\-pec\-ti\-ve\-ly,
  in $\pi_1(E(F))$.

  Assume the number $3\in \{|p|,q,r\}$. 

  In \emph{Case~1},~``$p,q,r>0$'',
  using a homeomorphism of $S^3$, we 
  may assume $p=3$. In this case the curve $a_1\simeq
  x_2^{(r+1)/2}x_1^{-1}$ represents a primitive element of
  $\pi_1(E(F))$ for, the set $\{x_2^{(r+1)/2}x_1^{-1}, x_2\}$ is a
  basis of $\pi_1(E(F))$. Therefore, by Theorem~\ref{thm31},~$h(k)=h(F)=1$, and
  $cw(k)=4$. 

  In \emph{Case~2},~``$p<0$ and $q,r>0$'', if
  $p=-3$, then the curve $a_2\simeq (x_2x_1)^{(q-3)/2}x_2$
  represents a primitive element of  $\pi_1(E(F))$ for, the set
  $\{(x_2x_1)^{(q-3)/2}x_2, x_2x_1\}$ is a basis of
  $\pi_1(E(F))$. If $q=3$ or $r=3$, we may assume that
  $q=3$, and then the curve $a_2\simeq x_1^{(|p|-3)/2}x_2$
  represents a primitive element of  $\pi_1(E(F))$ for, the
  set~$\{x_1^{-(|p|-3)/2}x_2, x_1\}$ is a basis of
  $\pi_1(E(F))$.  

  In
  both cases, $p=-3$, or $q$ or $r=3$, we conclude by Theorem~\ref{thm31},
  $h(k)=h(F)=1$, and  $cw(k)=4$.

\end{example}

\begin{remark}
\label{remark311}
If $|q|, |r|$ are odd integers $\geq 3$, 
then $k=P(\pm3, q, r)$ has
tunnel number two.  
Then the family of pretzel knots $\{P(\pm3, q, r) : |q|,|r|\text{ odd
  integers }\geq3 \}$ is
a family of examples of non-fibered knots~$k$
for which the 
strict inequality $h(k)< t(k)$ holds (compare with \cite{pajilon},
where it is proved that $h(k)\leq t(k)$). 
\end{remark}

\section{Pretzel knots: the case $|p|,|q|,|r|\geq5$}
\label{sec4}

In this section we show:

\begin{theorem}
\label{thm41}
The free genus one Seifert surface for a pretzel knot $P(p,q,r)$ with
$|p|, |q|, |r| \geq5$ has handle number two.
\end{theorem}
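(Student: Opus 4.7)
By \cite{godaishi}, the black surface $F$ is, up to isotopy, the unique incompressible Seifert surface of $k=P(p,q,r)$, so $h(k)=h(F)$; combined with Corollary~\ref{coro36} this gives $h(F)\leq 2$. It therefore suffices to rule out $h(F)=0$ and $h(F)=1$.

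Non-fibredness ($h(F)\neq 0$) is standard for these pretzel knots: one may invoke the fact that their Alexander polynomials are not monic, or argue directly via Corollary~\ref{coro33} that the elements
\[
a_1\simeq x_2^{(r+1)/2}x_1^{-(p-1)/2},\qquad a_2\simeq x_1^{(p+1)/2}(x_2x_1)^{(q-1)/2}
\]
from Example~\ref{exPretzel} (in Case~1; the other sign cases are analogous) fail to form a basis of $\pi_1(E(F))$, as can be seen by abelianizing and noting that the resulting $2\times 2$ exponent matrix has determinant $\neq\pm 1$ once $|p|,|q|,|r|\geq 5$.

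The core of the argument is to show $h(F)\neq 1$. Fix the spine $\Gamma=a_1\vee a_2$ of Example~\ref{exPretzel}; by Remark~\ref{remark26} it suffices to analyse this single pair $(E(F),\Gamma)$. Assume for contradiction $h(F)=1$. Then by Remark~\ref{remark23}(\ref{dos}) and Section~\ref{sec271}, a one-handled decomposition is witnessed by a $\partial$-parallel arc $\gamma\subset E(F)$ disjoint from $\Gamma$, together with an essential disk $B\subset E(\gamma)$ disjoint from $\Gamma$ whose drilling returns a regular neighborhood of $\Gamma$; in particular such an arc $\gamma$ must make the Whitehead diagram of $(E(\gamma),\Gamma)$ disconnected after the cut-vertex slides of Section~\ref{sec271}. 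I would rule this out by enumeration: by the discussion at the end of Section~\ref{sec271}, the equivalence classes of candidate arcs $\gamma$ are in bijection with unordered pairs of faces of the Whitehead graph $G$ of $(E(F),\Gamma)$ on $\partial E(x_1\cup x_2)\cong S^2$. Using the symmetries of $P(p,q,r)$ (cyclic permutation of the arms and the global sign change $(p,q,r)\mapsto(-p,-q,-r)$) one reduces to a small list of face pairs; for each, add the fat vertices $z,\bar z$ and the two new edges, perform Figure~\ref{fig01}-type slides exhaustively, and verify that the resulting reduced graph is connected and cut-vertex-free. Corollary~\ref{coro29} then forbids the required disjoint disk $B$, the desired contradiction.

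The main obstacle is the combinatorial bookkeeping in this last step. The hypothesis $|p|,|q|,|r|\geq 5$ enters essentially: each arm contributes several parallel edges to $G$---coming from the blocks $x_1^{\pm(p\pm 1)/2}$ and $x_2^{\pm(r\pm 1)/2}$ and from the $(q-1)/2$ repetitions of the word $x_2x_1$---so no single additional arc can sever the graph or create a cut vertex, in contrast to Example~\ref{exPretzel} where some $|p|,|q|,|r|$ equals $3$ and an explicit primitive representative appears among the $a_i$. The faces of $G$ fall into a small number of combinatorial families (corner faces meeting two arms, strip-interior faces lying inside one band of parallel edges, and the face bordering the base point), and I expect the verification to proceed family-by-family, organised around a single master picture of $G$ for a generic specimen such as $(p,q,r)=(5,5,5)$ of Figure~\ref{figure1}; the technical effort lies in checking that no slide along a cut vertex introduced by the two new edges propagates into a global disconnection.
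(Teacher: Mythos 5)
Your overall strategy coincides with the paper's: fix the spine $\Gamma=a_1\vee a_2$ of Example~\ref{exPretzel}, use Remark~\ref{remark26} to work with the single pair $(E(F),\Gamma)$, observe (via Lemma~\ref{lema42}/Corollary~\ref{coro43}, which is what Section~\ref{sec271} is quoting) that the co-core arc $\gamma$ of a putative one-handled decomposition is $\partial$-parallel, enumerate candidate arcs by pairs of faces of the Whitehead graph, drill, slide, and invoke Corollary~\ref{coro29}. The problem is that the decisive step --- the verification that every candidate arc fails --- is exactly where the paper does its real work, and in your plan it is only announced. The set of faces of $G$ grows with $p,q,r$, so your ``small list of face pairs'' is not finite uniformly in $(p,q,r)$; some structural reduction is indispensable. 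The paper supplies it in three stages: Claim~0 (the $\partial$-parallelism disk of $\gamma$ must meet both $a_1$ and $a_2$), and Claims~1 and~2 (any $\partial$-parallel arc of length $\geq 2$ in either one-curve diagram $G_1$ or $G_2$ of Figure~\ref{a1a2} already fails, analysed via the four arc types of Figures~\ref{Ppqr4}--\ref{Ppqr8}), which together force $\gamma$ to encircle exactly one edge of $G_1$ and exactly one edge of $G_2$; that leaves, up to the slides of Section~\ref{sec271}, only the four arc types of Figure~\ref{Ppqr10}, each of which is checked explicitly (the slid graph has simple associated graph a hexagon or one of the graphs of Figure~\ref{Ppqr11}, with no cut vertex). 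Moreover, your heuristic ``no single additional arc can sever the graph or create a cut vertex'' is false as stated: drilling the relevant arcs \emph{does} create cut vertices, and the content of the proof is that the ensuing cut-vertex slides terminate in a connected graph with no cut vertex rather than in a disconnection --- precisely the verification you defer to ``I expect''.

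Two smaller points. First, the sign reduction is not a symmetry bookkeeping remark: after reducing to $p,q,r>0$ or $p<0<q,r$, the paper must build a \emph{different} spine in the negative case (sliding the middle edge of the $\theta$-graph the other way), and it notes explicitly that the resulting diagram for $a_1$ (Figure~\ref{Ppqr1113}) is not isomorphic to the positive one, so Case~2 needs its own (parallel but separate) check with its own arc types and final graphs (Figures~\ref{neg2colors}, \ref{negGraphs}). Second, your first argument for $h(F)\neq 0$ is not valid: Corollary~\ref{coro33} gives only a sufficient condition for fibredness for a \emph{given} spine, so showing that this particular pair of curves is not a basis (by abelianization or otherwise) does not rule out fibredness; keep instead the Alexander-polynomial argument or, as the paper does, the fact that the only fibered genus one knots are $3_1$ and $4_1$.
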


As noted in Example~\ref{exPretzel}, when dealing with the pretzel
knot $k=P(p,q,r)$ we may assume:
\emph{Case 1:} ``$p,q,r >0$'', or
\emph{Case 2:} ``$p<0$ and $q, r >0$''.

\subsection{Handle decompositions of $E(P(p,q,r))$}

\begin{lemma}
\label{lema42}
Let $V$ be a handlebody and let $\alpha \subset V$ be a properly embedded
arc. If the exterior~$E(\alpha)\subset V$ is a handlebody, then
$\alpha$ is parallel into $\partial V$. 
\end{lemma}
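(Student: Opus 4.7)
The plan is to view $V$ as obtained from $E(\alpha)$ by a single $2$-handle attachment. Concretely, $\mathcal{N}(\alpha)\cong D^2\times I$ meets $\partial V$ in two disks $D_1,D_2$ (small neighbourhoods of the endpoints of $\alpha$) and meets $E(\alpha)$ along an annulus $A\subset\partial E(\alpha)$, so that $V=E(\alpha)\cup_A(D^2\times I)$ is the result of attaching a $2$-handle to $E(\alpha)$ along the core $c$ of $A$. A quick Euler characteristic computation gives $\chi(E(\alpha))=\chi(V)-1$, so if $V$ has genus $g$, then $E(\alpha)$ is a handlebody of genus $g+1$.

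The crux---and the main obstacle---is to exhibit a \emph{primitive} meridian disk $\Delta\subset E(\alpha)$, i.e., one with $|\partial\Delta\cap c|=1$. This rests on the classical fact that attaching a $2$-handle to a handlebody $H$ along a simple closed curve $c\subset\partial H$ yields a handlebody of one lower genus exactly when $c$ is primitive in this geometric sense. Van~Kampen's theorem gives the algebraic counterpart $\pi_1(V)\cong\pi_1(E(\alpha))/\langle\langle c\rangle\rangle$, which forces $c$ to represent a primitive element of the free group $\pi_1(E(\alpha))$; the geometric upgrade to a meridian disk meeting the simple closed curve $c$ once is the nontrivial content, obtained by handle-cancellation and disk-slide arguments applied to a meridian system of $E(\alpha)$.

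With such $\Delta$ in hand, I would isotope it so as to minimize $|\partial\Delta\cap\partial A|$. Outermost-arc isotopies within $A$ eliminate any arc of $\partial\Delta\cap A$ with both endpoints on a single component of $\partial A$, so every remaining arc of $\partial\Delta\cap A$ runs across $A$ from one boundary circle to the other, and $|\partial\Delta\cap c|=1$ forces exactly one such arc $\delta$. Write $\partial\Delta=\delta\cup\epsilon$ with $\epsilon\subset\partial V\cap\partial E(\alpha)$. Parametrize $\mathcal{N}(\alpha)=D^2\times I$ so that $\alpha=\{0\}\times I$ and $\delta\subset\partial D^2\times I$ runs from $\partial D^2\times\{0\}$ to $\partial D^2\times\{1\}$, and let $R\subset\mathcal{N}(\alpha)$ be the ruled rectangular disk swept by line segments joining points of $\alpha$ to the corresponding points of $\delta$. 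Then $D:=\Delta\cup R$ is an embedded disk in $V$ with $\partial D=\alpha\cup\beta$, where $\beta\subset\partial V$ is an arc made of $\epsilon$ together with two short arcs across $D_1,D_2$; this $D$ is the desired $\partial$-parallelism for $\alpha$. The remainder of the proof---the assembly of $D$ from $\Delta$ and $R$ across $A$ and the $2$-handle $\mathcal{N}(\alpha)$---is direct once the primitive $\Delta$ is obtained.
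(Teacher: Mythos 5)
Your proposal is correct and takes essentially the same route as the paper: both identify the core $c=\mu$ of the annulus $\mathcal{N}(\alpha)\cap\partial E(\alpha)$, deduce from $\pi_1(V)\cong\pi_1(E(\alpha))/\langle\langle c\rangle\rangle$ being free that $c$ is primitive in the free group $\pi_1(E(\alpha))$, take an essential disk of $E(\alpha)$ meeting $c$ exactly once, and complete it by a product rectangle inside $\mathcal{N}(\alpha)$ to a disk exhibiting the $\partial$--parallelism of $\alpha$. Only note that ``the quotient is free, hence $c$ is primitive'' is not a formal consequence of van Kampen alone but is precisely where the paper invokes Whitehead's theorem; with that citation supplied, your level of justification (including the asserted geometric upgrade to a disk dual to $c$) matches the paper's.
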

\begin{proof}
By hypothesis, $\pi_1(E(\alpha))$ is a finitely generated free group.
If $\mathcal{N}(\alpha)= D^2\times I$ is a regular
neighbourhood of $\alpha$ in $V$, let $\mu=\partial D^2\times\{1/2\}$ be
a meridian of~$\mathcal{N}(\alpha)$. If~$N\langle\mu\rangle$ denotes
the normal closure of the element represented by $\mu$ 
in $\pi_1(E(\alpha))$, then
$\pi_1(E(\alpha))/N\langle\mu\rangle$ is isomorphic to the fundamental
group of the space obtained from $E(\alpha)$ by adding a 2--handle
along $\mu$. Then
$\pi_1(E(\alpha))/N\langle\mu\rangle\cong\pi_1(V)$
is a free group. It follows that $\mu$ represents a primitive element
in $\pi_1(E(\alpha))$ (see~\cite{whitehead}, Theorem~4). Thus, there is an
essential disk $\delta\subset E(\alpha)$  such that the number of
points $\#(\delta\cap
\mu)=1$. After an isotopy, we may assume that $\partial \delta
\cap \partial N(\alpha)=\gamma$ is an arc, and $\partial \delta= \beta \cup
\gamma$ where $\beta$ is an arc  contained in $\partial V$. 

There is a product 2--disk $Z=(\textrm{radius of } D^2)\times I$ between $\gamma$ and $\alpha$, with
$Z\subset \mathcal{N}(\alpha)$ for some product structure $D^2\times
I$ of $\mathcal{N}(\alpha)$. Then~$\delta$ can be extended to a disk 
$\delta'=Z\cup \delta$ 
whose boundary is a union of arcs $\alpha \cup \beta'$ with~$\beta'
\subset \partial V$ (and $\beta\subset\beta'$). Therefore, $\alpha$ is parallel into
$\partial V$.
\end{proof}

\begin{corollary}
\label{coro43}
Let $F$ be a free 
Seifert surface for a knot
$k$. Suppose~$F$ has handle number one and let $\alpha$ be the core of
the 
1--handle of a one-handled circular decomposition of $E(k)$ based on
$F$. Then $\alpha$ is parallel into 
$\partial E(F)$. 
\end{corollary}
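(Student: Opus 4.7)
The plan is to reduce the corollary directly to Lemma~\ref{lema42} applied with $V = E(F)$, which is a handlebody because $F$ is free. With that in place, it only remains to verify that the exterior $E(\alpha) = \overline{E(F) - \mathcal{N}(\alpha)}$ is itself a handlebody.

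To establish this, I would pass to the dual description of a circular decomposition laid out in Remark~\ref{remark23}~(\ref{dos}) and Section~\ref{sec271}: the one-handled decomposition $E(k) = \mathcal{N}(F)\cup B\cup P$ can be obtained by starting with $E(F)$, drilling out a single 2-handle whose co-core is exactly the arc $\alpha$, and then drilling out a single 1-handle; the final result is a regular neighborhood of $F\times\{0\}$ in $E(k)$, which is homeomorphic to $F\times I$ and is therefore a handlebody. After just the first drilling we have $E(\alpha)$, so $E(\alpha)$ is recovered from the handlebody $F\times I$ by reattaching the 1-handle drilled out in the second step, and hence $E(\alpha)$ is a handlebody.

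Lemma~\ref{lema42} then applies and yields that $\alpha$ is parallel into $\partial E(F)$, which is the statement of the corollary. The main piece of bookkeeping is the identification of the core of the single 1-handle of the circular decomposition of $E(k)$ with the co-core of the corresponding 2-handle drilled out of $E(F)$; once the conventions of Remark~\ref{remark23}~(\ref{dos}) and Section~\ref{sec271} are in hand this is a routine translation between the two perspectives on a circular decomposition, so there is no essential obstacle beyond it.
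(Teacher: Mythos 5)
Your proposal is correct and follows essentially the same route as the paper: show that $E(\alpha)$ is a handlebody because it is a regular neighbourhood of $F\times\{0\}$ (a handlebody) with the single 1--handle of the decomposition reattached, and then invoke Lemma~\ref{lema42} with $V=E(F)$. The identification of $\alpha$ with the co-core of the drilled 2--handle via Remark~\ref{remark23}~(\ref{dos}) and Section~\ref{sec271} is exactly how the paper sets it up as well.
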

\begin{proof} 
As in Remark~\ref{remark23}~(\ref{dos}), the one-handled decomposition
of the pair $(E(F),F)$ is constructed by, first, drilling a 2--handle out of
$E(F)$ disjoint with, say, $F\times\{1\}$.
This 2--handle has as co-core the arc~$\alpha$ of the statement (cf.
Remark~\ref{sec271}). After drilling out~$\alpha$, we, secondly,
drill one  
1--handle $B$ out of the exterior $E(\alpha)\subset E(F)$ with $B$ disjoint
with $F\times\{1\}$.
The result of this drilling is a regular
neighbourhood of the surface $F \times \{0\}$ in $E(k)$ which is a
handlebody. Therefore, the exterior $E(\alpha)$ in~$E(F)$ is the union
of the
neighbourhood of $F \times \{0\}$ and the 1--handle~$B$; that is,~$E(\alpha)$ is a
handlebody. By Lemma~\ref{lema42} we conclude that $\alpha$ is parallel
into $\partial E(F)$.  

\end{proof}

\begin{proof}[Proof of Theorem~\ref{thm41}]
Let $F$ be the free genus one Seifert surface for $k=P(p,q,r)$ with
$|p|, |q|, |r|$ odd integers $\geq5$. 

For the 
sake of contradiction, we assume that $F$ has handle number one. By
Corollary~\ref{coro43}, the core $\gamma$ of the 1--handle of the
circular decomposition of
$E(k)$ based on $F$ is
parallel into $\partial E(F)$. By assumption, there is also a 2--handle
$B\cong I\times D^2$ that completes the decomposition, such that the exterior
$E(\gamma\cup B)\subset E(\gamma)$ is a
regular neighbourhood of $F$ in $E(k)$, and
$\partial B$ is disjoint with $F$. In particular the core,
$\{1/2\}\times D^2$, of
$B$ is an essential disk in $E(\gamma)$ disjoint with $F$.
\begin{figure}[htp]
\centering
\includegraphics[width=12true cm]{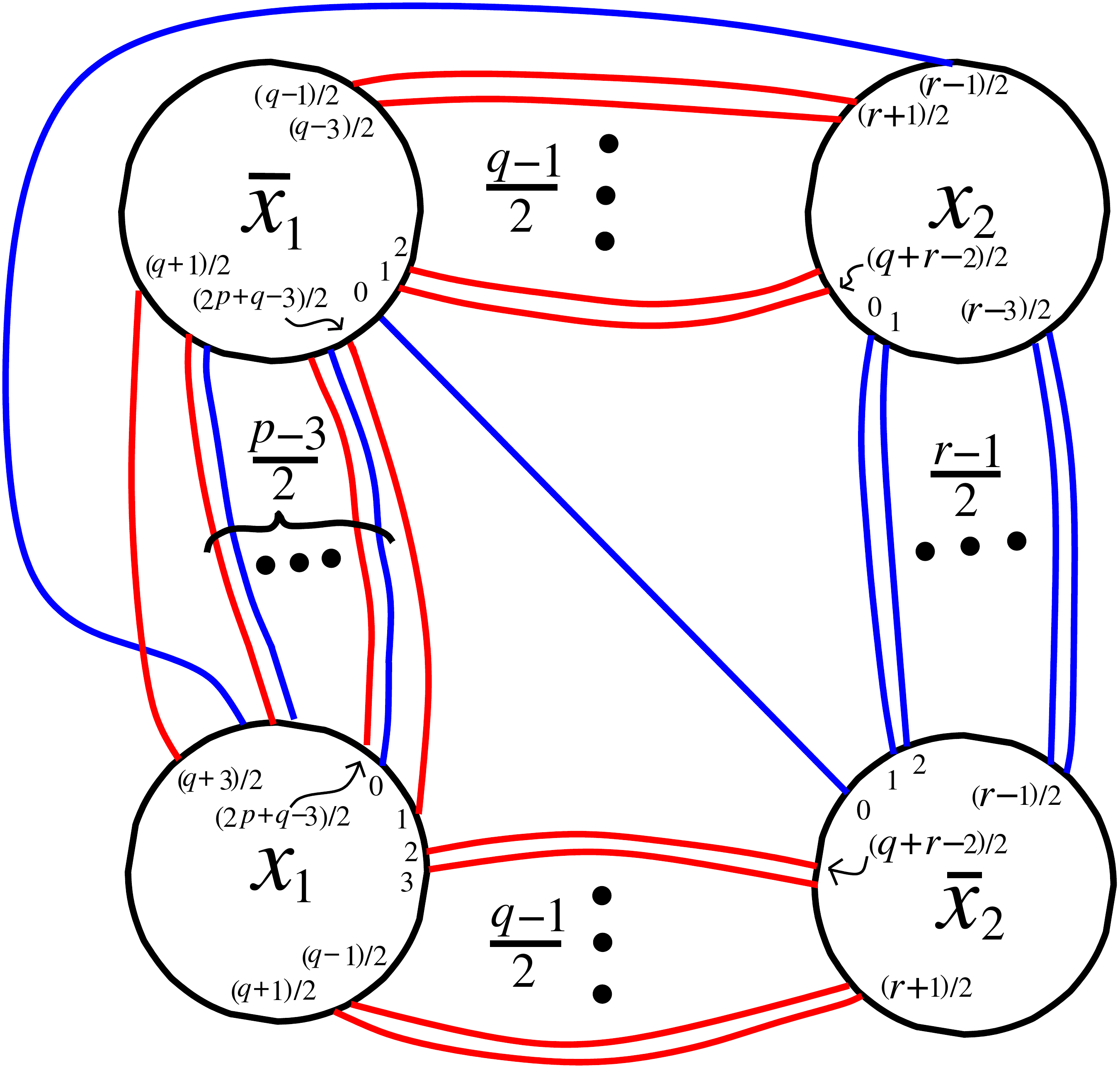}
\caption{}
\label{Ppqr1}
\end{figure}
We will show that any essential disk in $E(\gamma)$ intersects~$F$,
obtaining the desired contradiction.

\emph{Case 1:} ``$p,q,r >0$''.
Let $\Gamma= a_1\vee a_2 $ be the spine for $F$ given in
Example~\ref{exPretzel}. By Remark~\ref{remark26}, we only need to analyze the
handle decompositions of $(E(F),\Gamma)$. There is an obvious system of
meridional disks 
$x_1,x_2\subset E(F)$ as depicted in the upper part of
Figure~\ref{fig31}. The Whitehead 
diagram for $(E(F),\Gamma)$ with respect to~$x_1,x_2$ looks like
Figure~\ref{Ppqr1}. 

In the corresponding
Whitehead graph $G$ we see:

\begin{itemize}
\item  Four fat vertices corresponding to the
  meridional disks $x_1$ and $x_2$. 

\item There are $(q-1)/2$ horizontal edges connecting $\bar x_1$ and 
  $x_2$, and $(q-1)/2$ horizontal edges connecting $x_1$ and $\bar x_2$;
  all these horizontal arcs belong to the curve $a_2$.

\item There are $(r-1)/2$ vertical edges connecting $x_2$ and $\bar x_2$;
  one diagonal edge connecting $x_1$ and $x_2$, and one diagonal edge
  connecting $\bar x_1$ and $\bar x_2$; all these vertical and diagonal
  edges belong to the curve $a_1$. 

\item Finally, connecting $x_1$ with 
  $\bar x_1$, we find, going from right to left in Figure~\ref{Ppqr1},
  first an arc
  belonging to $a_2$, and then we find
$(p-3)/2$
pairs of arcs belonging
  consecutively to $a_1$ and $a_2$; and a last arc belonging to $a_2$
  which crosses with the diagonal arc from $x_1$ to $x_2$ on the base
  point of $\Gamma$.

\end{itemize}

\textbf{Claim 0:} Let $z$ be a $\partial$--parallelism disk for the arc $\gamma$ in
$E(F)$. 
Then the disk $z$ contains at
least one point of $a_1$ and one point of $a_2$.
\begin{proof}
Let $G_i$ be the Whitehead graph
of the pair $(E(F),a_i)$ with respect to $x_1,x_2$ ($i=1,2$). See
Figure~\ref{a1a2}. After sliding the handle defined by the disk $x_2$
along the handle defined by $\bar x_1$ on the right side of Figure~\ref{a1a2}, 
the image of the graph~$G_2$ looks like Figure~\ref{Ppqr4}. Since
these graphs are connected and contain no cut vertex, it follows from
Corollary~\ref{coro29} 
 that
any essential disk in $E(F)$ intersects $a_i$ ($i=1,2$). 
Now, the exterior
$E(\gamma)$ can be regarded as a copy of $E(F)$ plus one 1--handle defined by the
disk $z$. Assume $z\cap a_2\neq\emptyset$. If $z\cap a_1=\emptyset$,
then~$a_1$ is contained in the copy of~$E(F)\subset E(\gamma)$. By hypothesis there is
an essential disk $\Delta\subset 
E(\gamma)$ such that~$\Delta\cap(a_1\cup a_2)=\emptyset$. Now,
$\Delta\cap z\neq\emptyset$, otherwise $\Delta$ is a subset of the
copy of~$E(F)\subset E(\gamma)$ missing the extra 1--handle, and
$\Delta\cap a_1=\emptyset$, contradicting that 
any essential disk in~$E(F)$ intersects~$a_1$.
Through isotopies, we may assume
that $\Delta\cap z$ is a set of disjoint arcs. 
Then 
the intersection of $\Delta$ with the copy of~$E(F)\subset E(\gamma)$,
that is, the set~$\Delta\cap\overline{(E(\gamma)-\mathcal{N}(z))}$,
is a set of disjoint properly embedded
disks~$\Delta_1,\dots,\Delta_n\subset E(F)$. Since $\Delta$ is not
parallel to $z$ in $E(\gamma)$, at least one $\Delta_i$
is 
essential in $E(F)$, otherwise $\Delta$ would be parallel
into~$\partial E(\gamma)$. We 
obtain again an essential disk in $E(F)$ disjoint with $a_1$, which is
a contradiction as above, and, therefore, $z\cap
a_1\neq\emptyset$. 
\end{proof}
\begin{figure}[htp]
\centering
\includegraphics[width=12true cm]{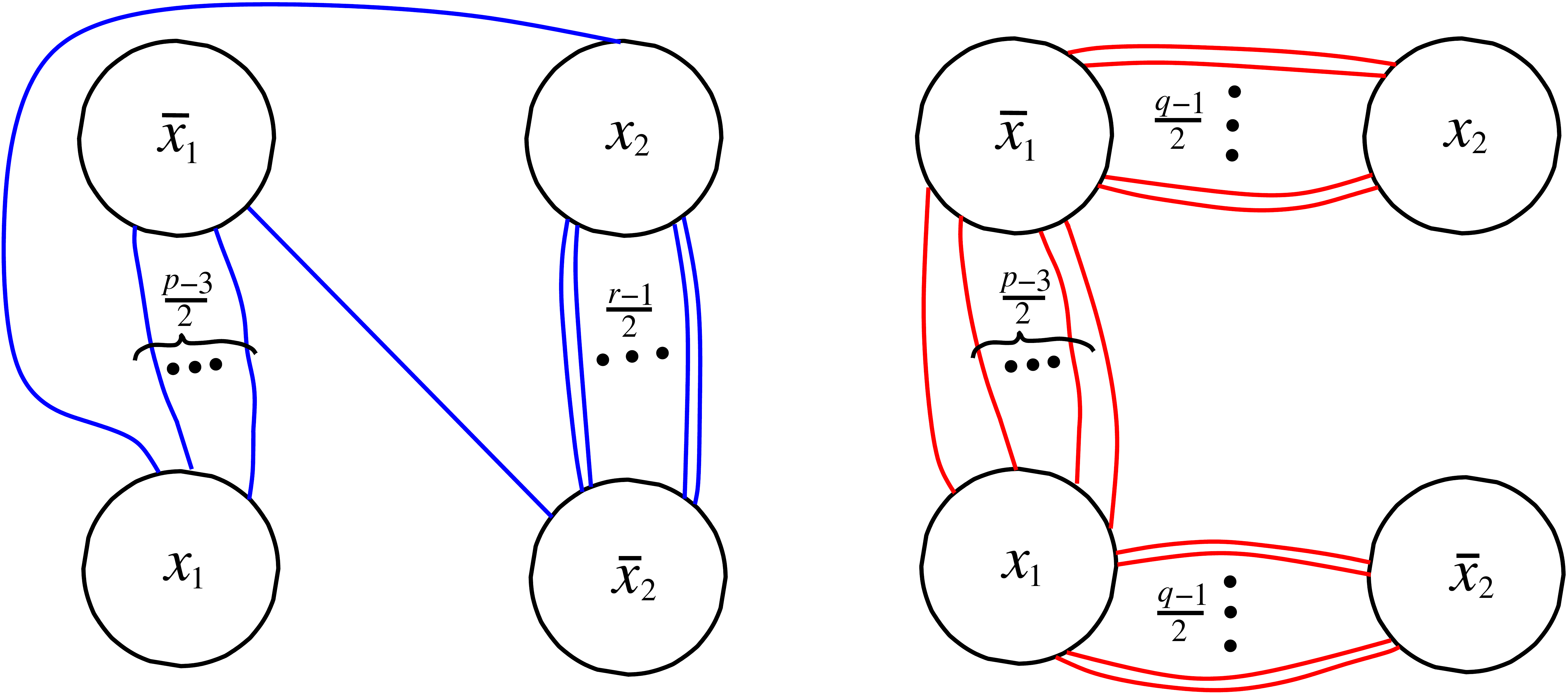}
\caption{The graphs of curves $a_1$ and $a_2$}
\label{a1a2}
\end{figure}

The arc $\gamma$, being $\partial$--parallel in $E(F)$ by
Corollary~\ref{coro43}, can 
be isotoped into this Whitehead diagram as a properly embedded arc with
ends disjoint with $G$ (that is, after an isotopy of $E(F)$, we may
assume that $\gamma$ is disjoint with the system of disks $x_1$
and~$x_2$). 
%
Recall that we are assuming that $\gamma$ is the core of a 1--handle
  of a one-handled circular decomposition of $E(k)$ based on $F$.
  Therefore, after drilling out $\gamma$, there is an essential disk
  in $E(\gamma)$ disjoint with $\Gamma$; that is, 
after drilling out~$\gamma$, and 
obtaining a new Whitehead diagram with six fat vertices with Whitehead
graph~$G'$, 
there is a sequence of handle slides of $E(\gamma)$ that 
disconnect the graph $G'$, giving an essential disk in $E(F)$ disjoint
with~$\Gamma$ (see Section~\ref{sec27}).

Let $G_i$ be the Whitehead graph
of the pair $(E(F),a_i)$ with respect to $x_1,x_2$. See
Figure~\ref{a1a2}. After drilling out the arc $\gamma$ from the diagram
of~$G_i$, 
we obtain a new Whitehead diagram for $(E(\gamma),a_i)$ with six fat
vertices, corresponding to $x_1,x_2$, and~$z$, and with Whitehead
graph $G_i'$. Performing the handle slides of $E(\gamma)$ as 
above, the image of 
the graph~$G_i'$ will be also disconnected, giving an
essential disk in $E(\gamma)$ disjoint with $a_i$ ($i=1,2$).

Notice that if we drill out an arc of length one in $G_i$ and perform
handle slides, the image of $G_i$ is disconnected (it contains four
isolated fat vertices), $i=1,2$. We deal with this kind of arcs after
Claims~1 and~2.

\textbf{Claim 1:} 
Let $\alpha$ be a properly embedded arc in
$(E(F),a_2)$, disjoint with $a_2$, such that $\alpha$ is parallel
into $\partial E(F)$, and $\alpha$ has length at least two in
$G_2$. 
Then any essential disk in $E(\alpha)$ intersects $a_2$.

\begin{proof}
The arc $\alpha$ \emph{minimally} encircles a number of edges of the
graph $G_2$. For example, the arc that encircles the two diagonal
edges in Figure~\ref{Ppqr4} actually has length 0. 

Now, after sliding the handle defined by the disk $x_2$ along the handle
defined by~$\bar x_1$ on the right side of Figure~\ref{a1a2}, 
the image of the graph
$G_2$ looks like Figure~\ref{Ppqr4}. The fat vertices of this graph
are also obtained from the images of the disks $x_1$ and $x_2$ after the
slide. We still call this new graph and new disks $G_2$, and $x_1$,  $x_2$,
respectively. 
This graph has $(q-3)/2$ vertical edges
connecting $x_2$ with $\bar x_2$, one diagonal edge connecting $x_2$
with $\bar x_1$, one diagonal edge connecting $x_1$
with $\bar x_2$, 
and there are $(p-1)/2$ 
vertical arcs connecting $x_1$ with $\bar x_1$.

Let $z$ be a minimal $\partial$--parallelism disk for $\alpha$ in $E(F)$,
and let $G$ be the Whitehead graph of $(E(\alpha),a_2)$ with respect
to $x_1,x_2$, and $z$, which is
obtained from~$G_2$, by cutting
along $z$ and adding two fat vertices $z$ and $\bar z$.

\noindent
\emph{Case ``Length of $\alpha=2$''}:
Since $p\geq5$, there are at least two vertical edges connecting~$x_1$
and $\bar x_1$. Then there are two 
types of arcs of length two for the
edges of~$G_2$ around 
$x_1$ as in Figure~\ref{Ppqr4},
for, any arc encircling two consecutive edges of~$G_2$
connecting~$x_1$ and $\bar x_1$ can be slid in $E(F)$  
into an arc of type~1 or type~2. See Figure~\ref{type2} where the arcs that
can be slid in $E(F)$ into an arc of type~2 are shown.
\begin{figure}
\centering
\includegraphics[width=8cm]{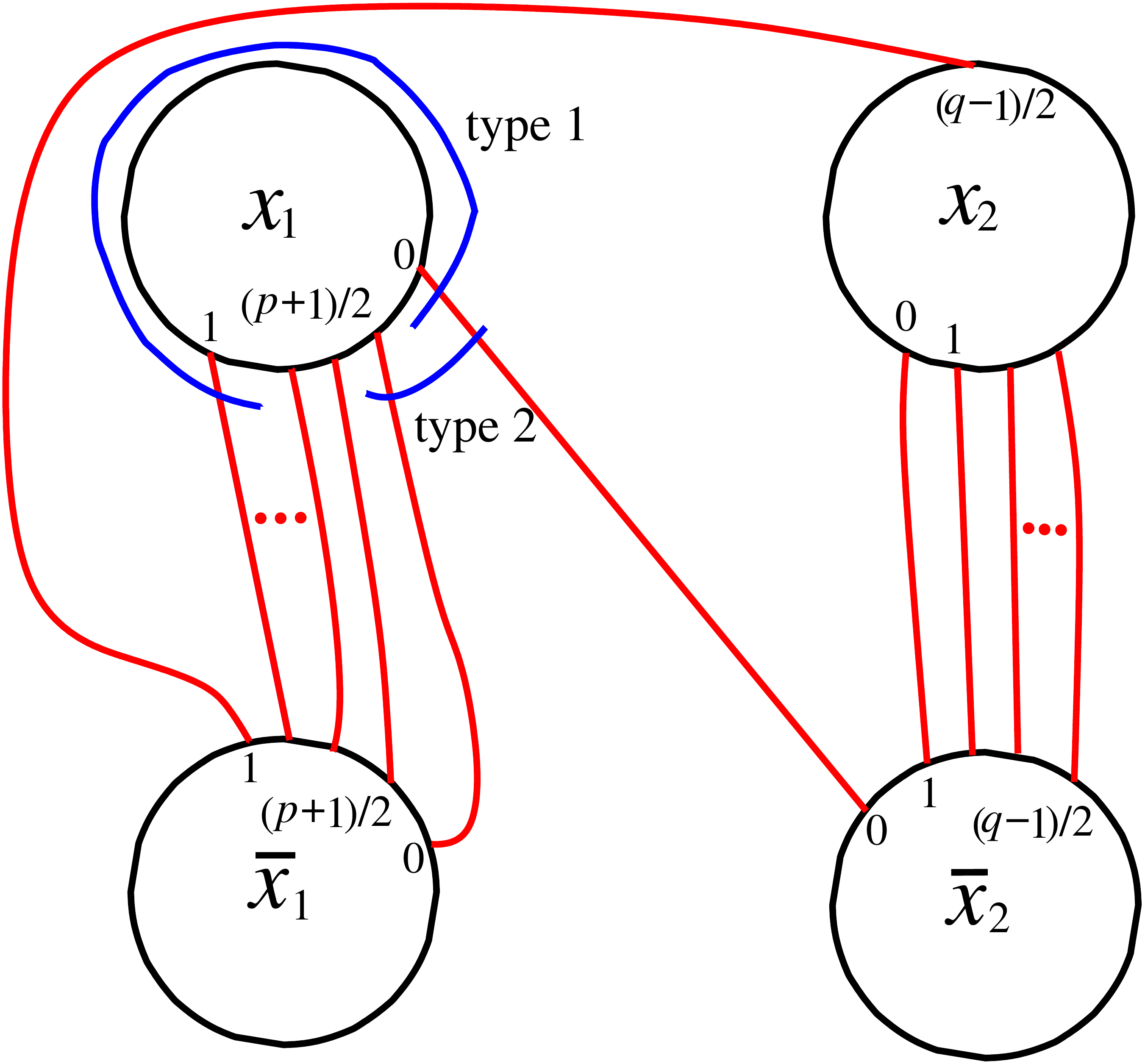}
\caption{}
\label{Ppqr4}
\end{figure}
\begin{figure}
\centering
\includegraphics[width=8cm]{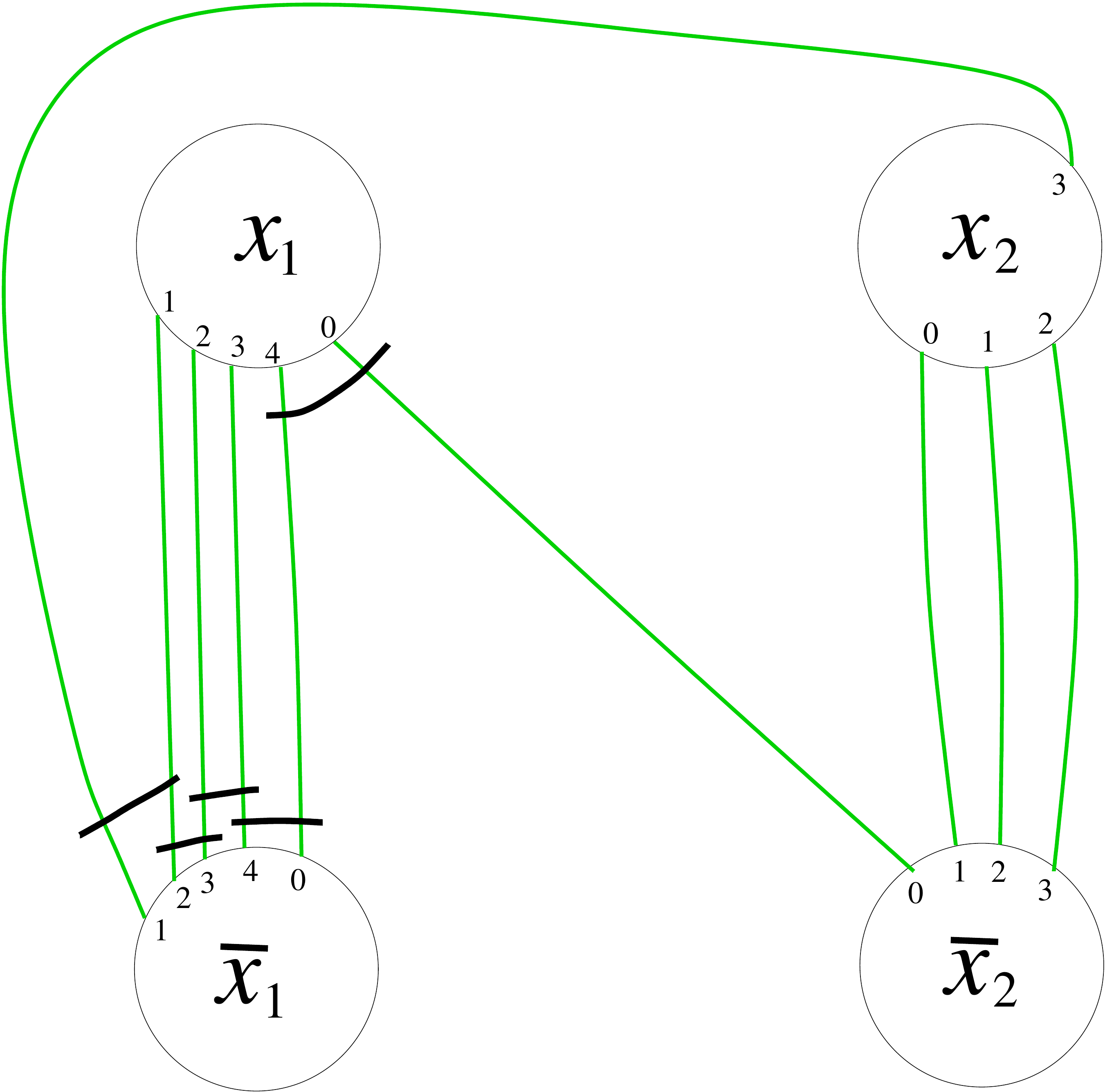}
\caption{}
\label{type2}
\end{figure}
\begin{figure}
\centering
\noindent
\includegraphics[height=5.5cm]{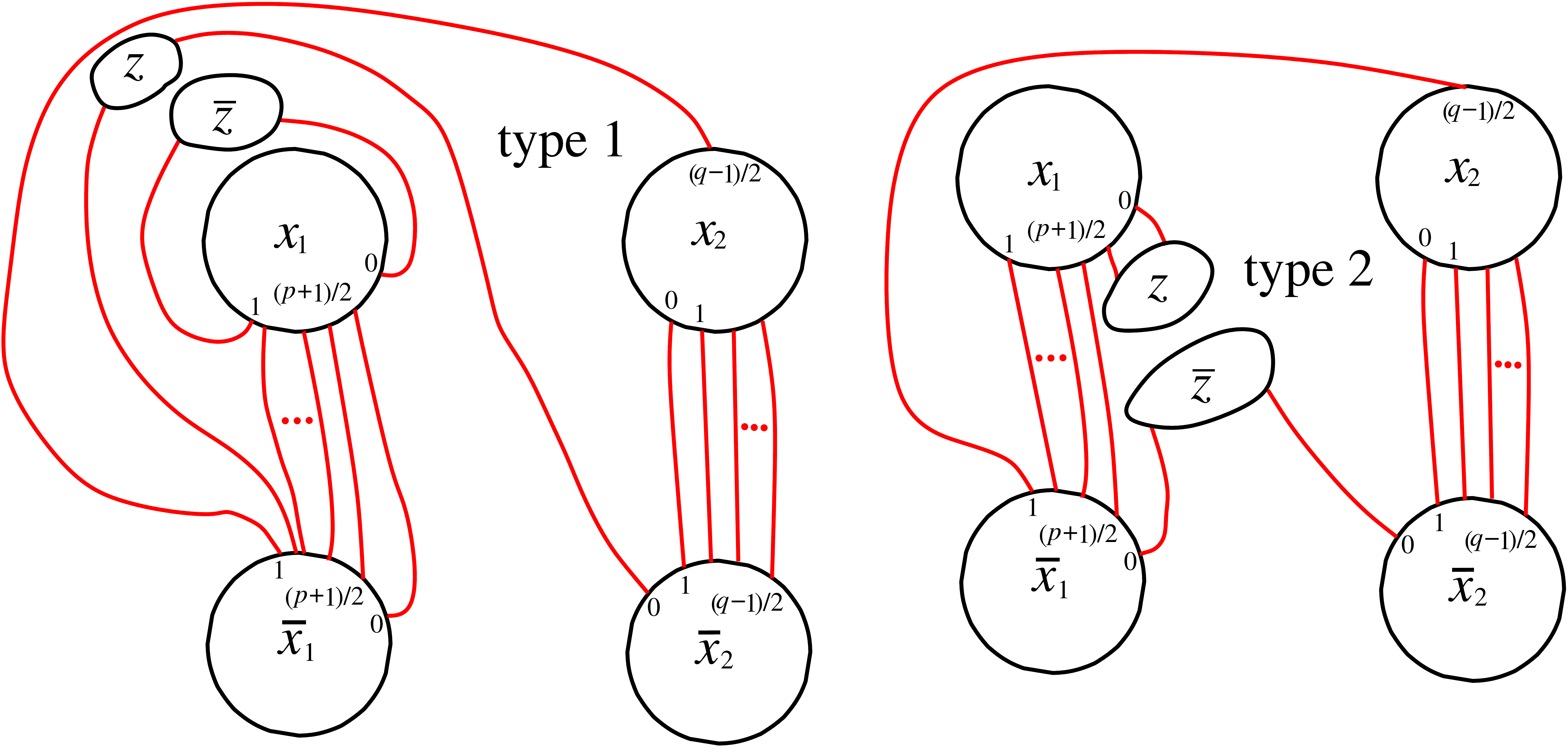}
\caption{}
\label{Ppqr4virotes}
\end{figure}

After drilling out the arc $\alpha$, if $\alpha$ is of type~1, or of
type~2, the new Whitehead graph contains a cut vertex (see
Figure~\ref{Ppqr4virotes}).  

After sliding handles, as in Section~\ref{sec27},
we end up with a graph $G_2'$ 
with its simple associated graph 
a cycle of six vertices and six
edges; that is, 
this simple graph contains no cut vertex.
Therefore, $G'_2$ contains no cut vertex, and by
Corollary~\ref{coro29},~$a_2$ intersects every essential disk
of~$E(\alpha)$.




If $q\geq7$, there are at least two vertical edges connecting
$x_2$ and $\bar x_2$. Then, by symmetry, the analysis of arcs of
length two around $x_2$ and $\bar x_2$ is the same as for arcs of
length two around $x_1$ and $\bar x_1$.

If $q=5$, there is a single vertical edge connecting
$x_2$ and $\bar x_2$, and, then, there are no arcs of length two
around $x_2$ or $\bar x_2$.

For arcs not around a vertex of $G_2$, there are two more types of arcs of length two as in Figure~\ref{Ppqr8},
\begin{figure}[htp]
\centering
\includegraphics[width=8cm]{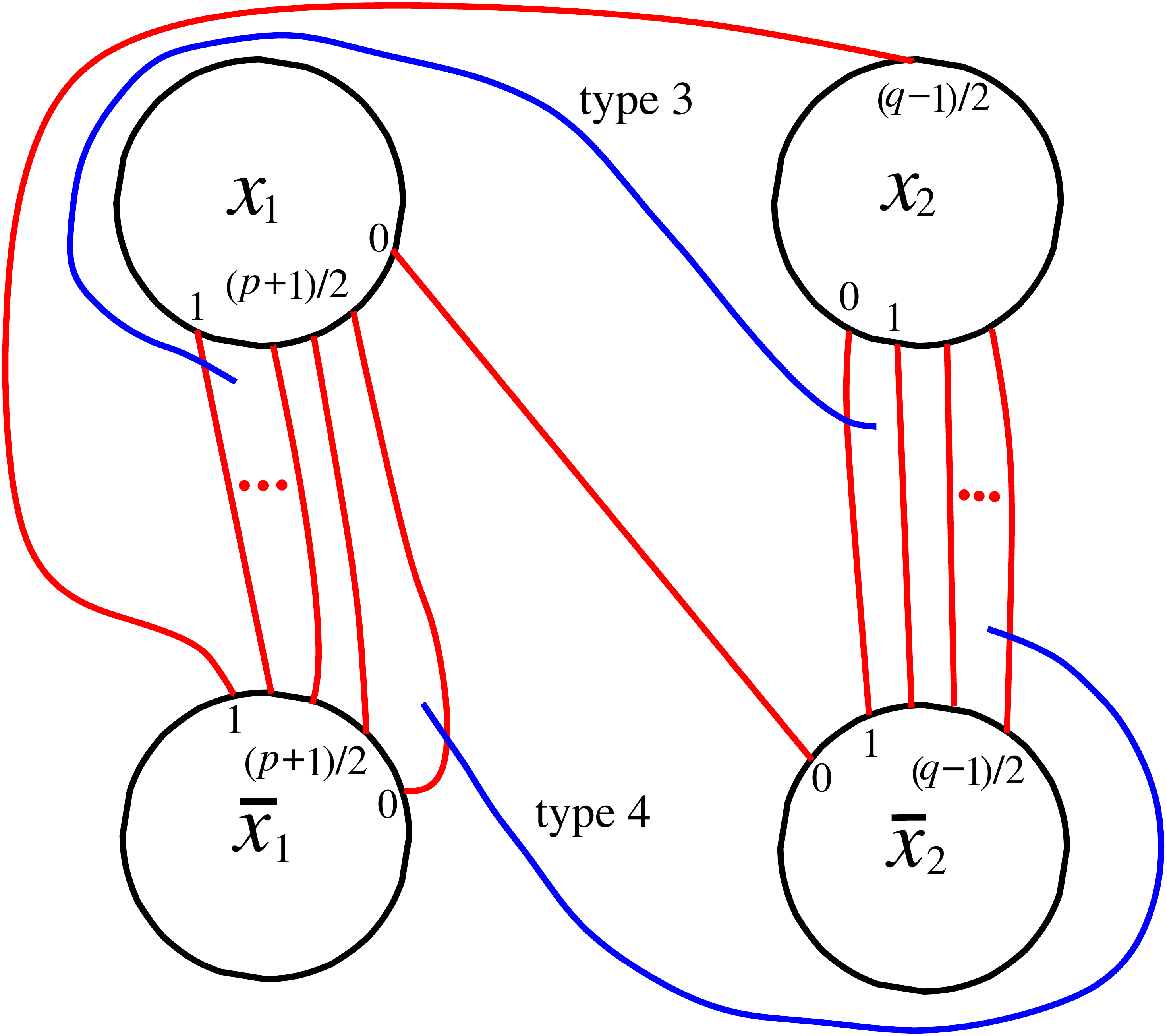}
\caption{}
\label{Ppqr8}
\end{figure}
but, after drilling out the arc $\alpha$ of type~3 or~4, the 
new Whitehead graph contains no cut vertex, and then, by
Corollary~\ref{coro29}, $a_2$ intersects every essential disk of~$E(\alpha)$.

\emph{Case ``Length of $\alpha\geq3$''}:
If $\alpha$ is an arc around $x_i$, we may assume that the length
of $\alpha$ in $G_2$ is between~3 and $degree(x_i)/2$ (see last
paragraph of Section~\ref{sec271}), and~$\alpha$ contains a 
sub-arc of type~1 or~2. After drilling out the
arc $\alpha$ and sliding, if there appear cut vertices, we end up
with a graph with its simple associated graph 
a cycle with six vertices and six edges.
Therefore, $a_2$ again intersects every essential disk
of~$E(\alpha)$. 

If $\alpha$ is of length at least 3, and $\alpha$ contains a sub-arc of
type~3 or~4, then, after drilling out the arc $\alpha$, the 
new Whitehead graph contains no cut vertex, and, by
Corollary~\ref{coro29}, we conclude that 
$a_2$ intersects every essential disk of~$E(\alpha)$. 

By the final remarks of
  Section~\ref{sec271}, the arcs of type 1-4 exhaust all arcs to be
  considered as arcs of a one-handled decomposition for $G_2$.

\end{proof}

\textbf{Claim 2:} 
Let $\alpha$ be a properly embedded arc in
$(E(F),a_1)$, disjoint with $a_1$, such that $\alpha$ is parallel
into $\partial E(F)$, and $\alpha$ has length at least two in
$G_1$. 
Then any essential disk in $E(\alpha)$ intersects $a_1$.

\begin{proof}

The Whitehead graph $G_1$ of $(E(F),a_1)$ has a shape as in
Figure~\ref{Ppqr4}, but with $(r-1)/2$ vertical edges
connecting $x_2$ with $\bar x_2$, one diagonal edge connecting~$x_2$
with~$\bar x_1$, one diagonal edge connecting $x_1$
with $\bar x_2$, and  there are $(p-3)/2$  vertical
arcs connecting $x_1$ with $\bar x_1$. 

A similar
(symmetric) analysis as in Claim~1, gives that $a_1$ intersects every
essential disk of~$E(\alpha)$. 

\end{proof}

We are assuming that, after drilling out
the arc $\gamma$, there is a set of handle slides of $E(\gamma)$ that 
disconnect the graph $G'$, giving an essential disk in $E(F)$ disjoint
with~$\Gamma$.

By Claims~1 and~2, $\gamma$ is of
length one in $G_1$, and of length one in $G_2$. If $\gamma$ is around
one fat vertex $\xi$ of $G$, it might happen that $\gamma$ encircles exactly
one edge of~$G_1$, and all but one edge of $G_2$, or vice versa. In this
case, $\gamma$ is around either $x_2$ or $\bar x_2$. There are
four arcs around $x_2$, and four arcs around $\bar x_2$ of this
kind. The four arcs with this property around $\bar x_2$ can be slid
in $E(F)$ and become equivalent to the four arcs around $x_2$ in
Figure~\ref{excepcion}; see Section~\ref{sec271}.
\begin{figure}[htp]
\centering
\includegraphics[width=8cm]{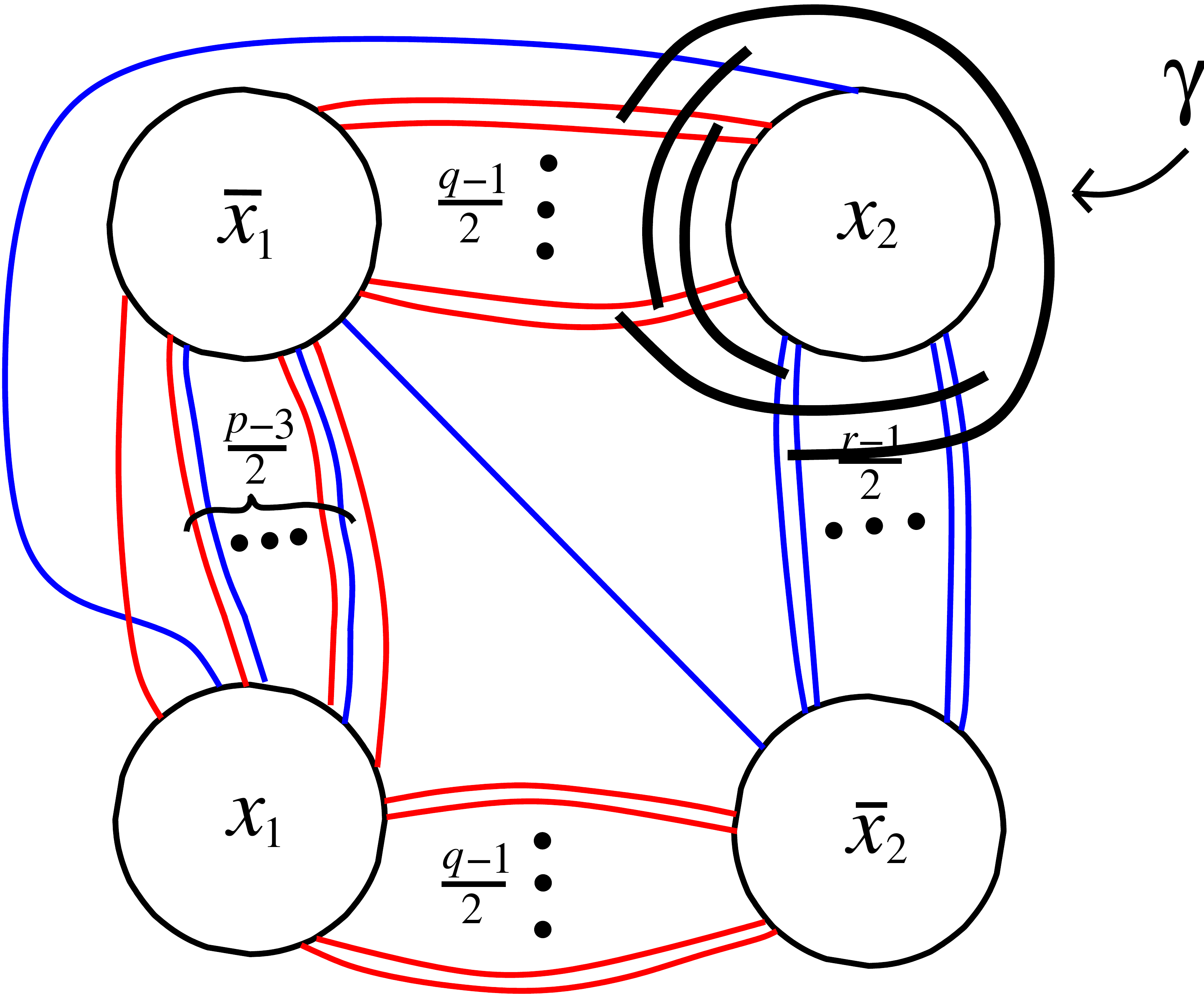}
\caption{}
\label{excepcion}
\end{figure}
After drilling
out $\gamma$, there is a cut vertex in the new Whitehead graph, and a
single handle slide produces a graph $G'$ with no cut 
vertices. By Corollary~\ref{coro29}, there are no essential disks
disjoint with $G$ in $E(\gamma)$. Another possibility is that $\gamma$
encircles all but one edge of $G_1$ and all but one edge of
$G_2$, but in this case, $\gamma$ also encircles exactly
one edge of $G_1$, and exactly one edge of $G_2$.

There are four types of arcs of length two encircling exactly
one edge of $G_1$ and exactly one edge of $G_2$ (see Figure
\ref{Ppqr10}). Again, any arc
encircling two edges of $G$, one of $G_1$ and one of $G_2$
can be slid in $E(F)$ into an arc of type~1, type~2, type~3, or type~4;
see Section~\ref{sec271}.  

\begin{figure}[htp]
\centering
\includegraphics[width=8cm]{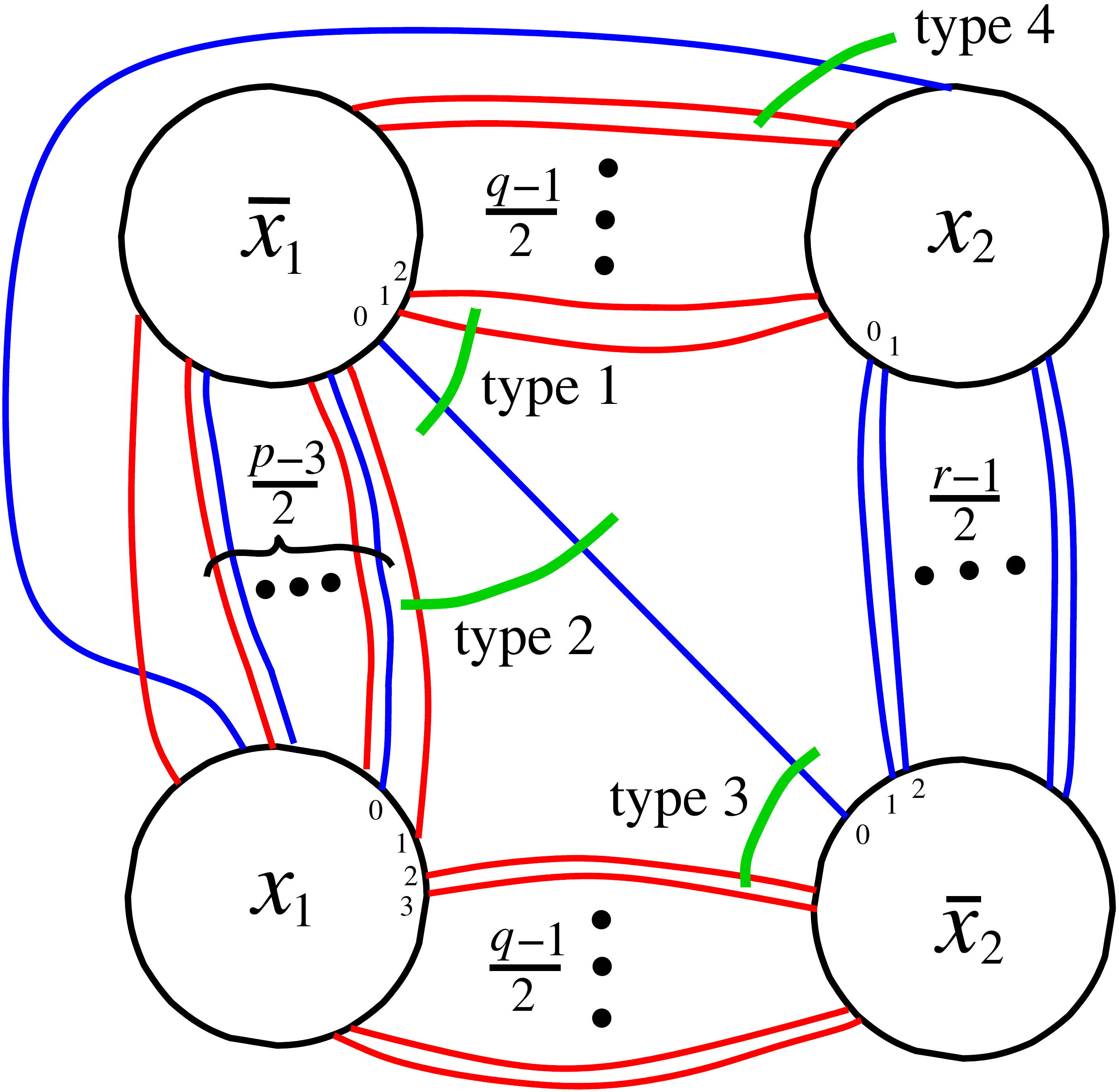}
\caption{}
\label{Ppqr10}
\end{figure}

After drilling out the arc $\gamma$, if $\gamma$ is of type~1, type~2,
type~3, or type~4, the 
new Whitehead graph contains a cut vertex. 
After sliding, we end up with a graph $G'$ 
with its simple associated graph as one of the drawings in Figure~\ref{Ppqr11}. Since
these graphs contain no cut vertex, by Corollary~\ref{coro29}, 
\begin{figure}[htp]
\centering
\includegraphics[height=4true cm]{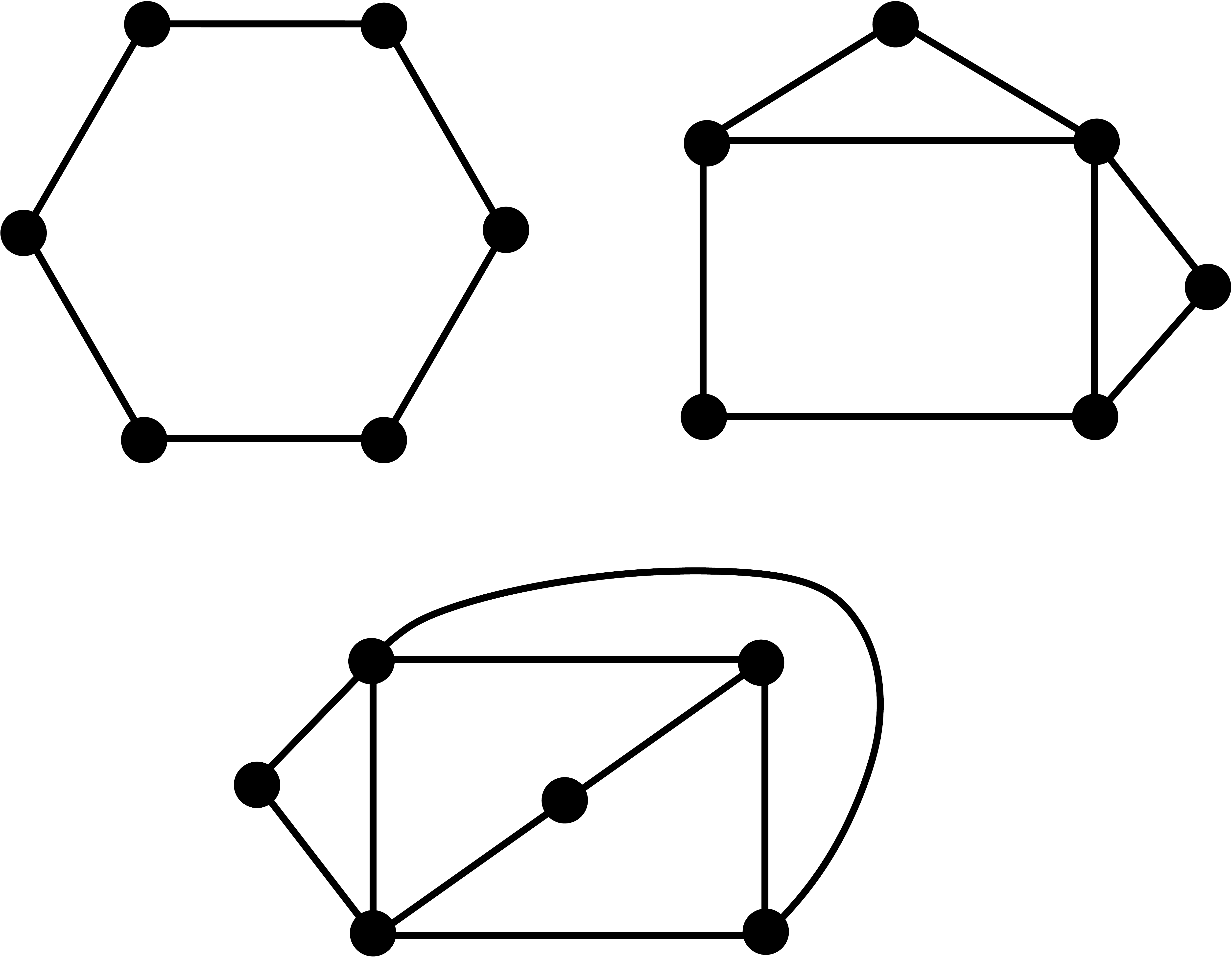}
\caption{}
\label{Ppqr11}
\end{figure}
%
%
we conclude that any
essential disk in~$E(\gamma)$ intersects~$G$, and, therefore,
intersects $\Gamma\subset F$. This contradiction shows that~$h(F)\neq1$. Since
$k=P(p,q,r)$ is not fibered, 
and
$h(F)\leq2$, by Corollary~\ref{coro36}, it follows that $h(F)=2$, 
when $p,q,r\geq5$.

This finishes Case~1.


\emph{Case 2:} ``$p<0$, and $q,r >0$''.
As in Example~\ref{exPretzel}, we construct a spine $\Gamma= a_1\vee
a_2 $ for $F$ starting with the 
spine
shown in Figure~\ref{fig31teta}, but now we slide the
middle edge of the $\theta$--graph  rightwards. The spine $\Gamma$ looks like
Figure~\ref{Ppqr1111}, and the Whitehead diagram for $(E(F),\Gamma)$
with respect to the system of disks $x_1,x_2$ is as in
Figure~\ref{Ppqr1112}. By Remark~\ref{remark26}, we only need to analyze the
handle decompositions of $(E(F),\Gamma)$.
\begin{figure}[htp]
\centering
\includegraphics[height=7true cm]{espinanegDer.pdf}
\caption{}
\label{Ppqr1111}
\end{figure}
\begin{figure}[htp]
\centering
\includegraphics[height=7true cm]{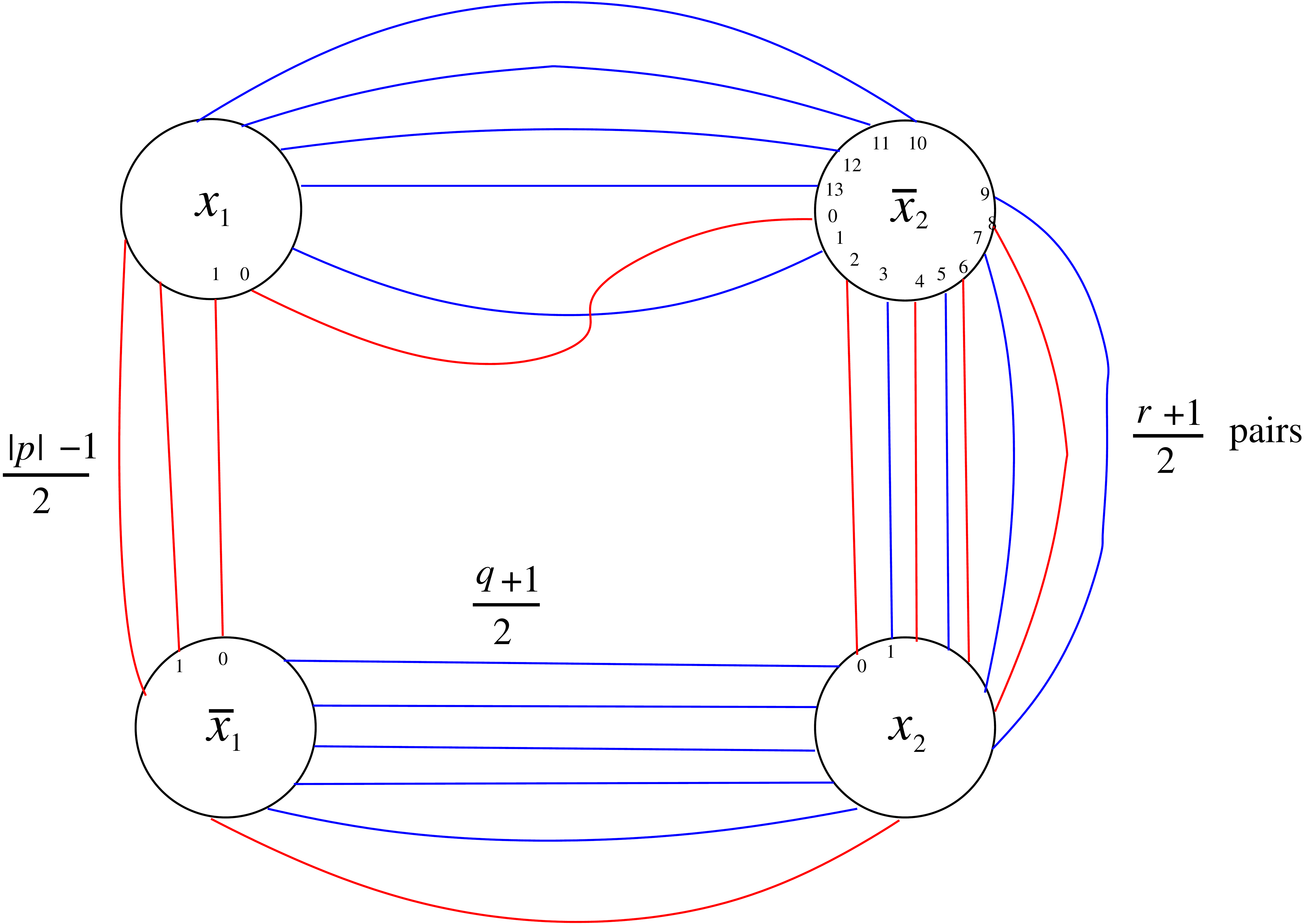}
\caption{}
\label{Ppqr1112}
\end{figure}

The Whitehead graphs $G_1$ and $G_2$ of the pairs $(E(F),a_1)$ and
$(E(F),a_2)$, respectively, are shown in
Figure~\ref{Ppqr1113}. Although these diagrams are similar to the
diagrams in Figure~\ref{a1a2} of Case~1, the configuration of the diagram for
$a_1$ here is not the same as the configuration of the positive case
(Case~1); that is, the corresponding Whitehead diagrams are not
isomorphic.
\begin{figure}[htp]
\centering
\includegraphics[height=12true cm]{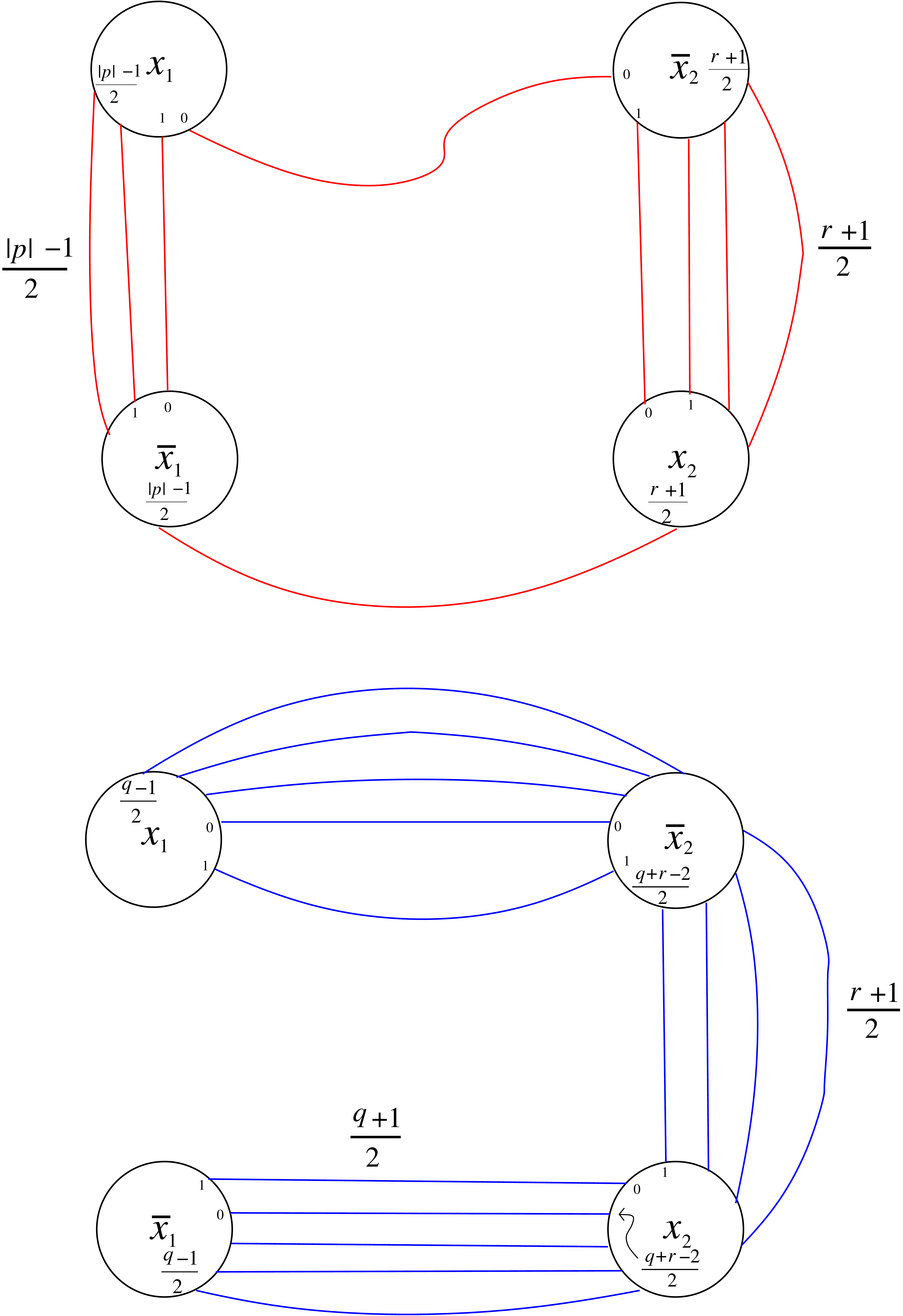}
\caption{}
\label{Ppqr1113}
\end{figure}
%


However, the analysis of the different properly embedded arcs in the
Whitehead diagrams of $(E(F),a_1)$, $(E(F),a_2)$, and $(E(F),\Gamma)$,
giving rise to a possible one-handled decomposition, is 
completely similar as in Case~1. 

The Whitehead diagram for $(E(F),a_2)$ is isomorphic to the
corresponding Whitehead diagram of Case~1. Then

\textbf{Claim 1:} 
Let $\alpha$ be a properly embedded arc in
$(E(F),a_2)$, disjoint with $a_2$, such that $\alpha$ is parallel
into $\partial E(F)$, and $\alpha$ has length at least two in
$G_2$. 
Then any essential disk in $E(\alpha)$ intersects $a_2$.
\newline\null\hfill$\Box$

\textbf{Claim 2:} 
Let $\alpha$ be a properly embedded arc in
$(E(F),a_1)$, disjoint with $a_1$, such that $\alpha$ is parallel
into $\partial E(F)$, and $\alpha$ has length at least two in
$G_1$. 
Then any essential disk in $E(\alpha)$ intersects $a_1$.
\begin{proof}
We first analyze arcs of length 2 in $G_1$. The arcs around vertices
$x_1$ and $\bar x_1$ are shown in Figure~\ref{nega1}. There are only
two types after sliding the arcs in $E(F)$.
\begin{figure}[htp]
\centering
\includegraphics[height=6true cm]{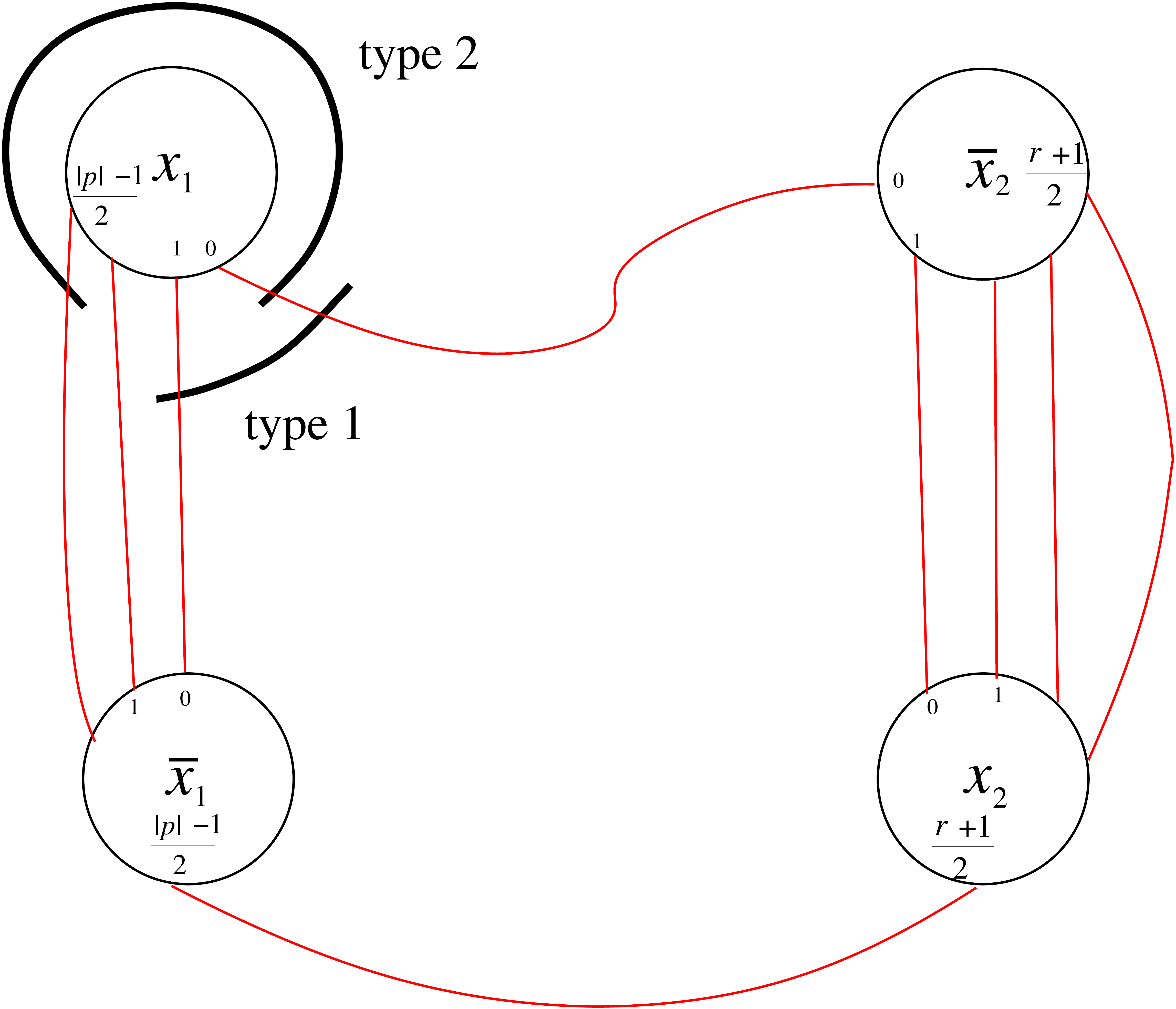}
\caption{}
\label{nega1}
\end{figure}
After drilling out the arc $\alpha$, if $\alpha$ is of type~1, or of
type~2, the new Whitehead graph contains a cut vertex, but
after sliding handles, as in Section~\ref{sec271},
we end up with a graph $G_1'$ 
with its simple associated graph a cycle of six vertices and six
edges; that is, 
this simple graph contains no cut vertex. Therefore, $G'_1$ contains no
cut vertex, and by Corollary~\ref{coro29}, $a_2$ intersects every
essential disk of~$E(\alpha)$. 

For arcs of length 2 around the vertices $x_2$ and $\bar x_2$, the
analysis is identical to Case~1.

For arcs not around a vertex of $G_1$, there are two more types of arcs of length two as in Figure~\ref{nega12},
\begin{figure}[htp]
\centering
\includegraphics[height=6true cm]{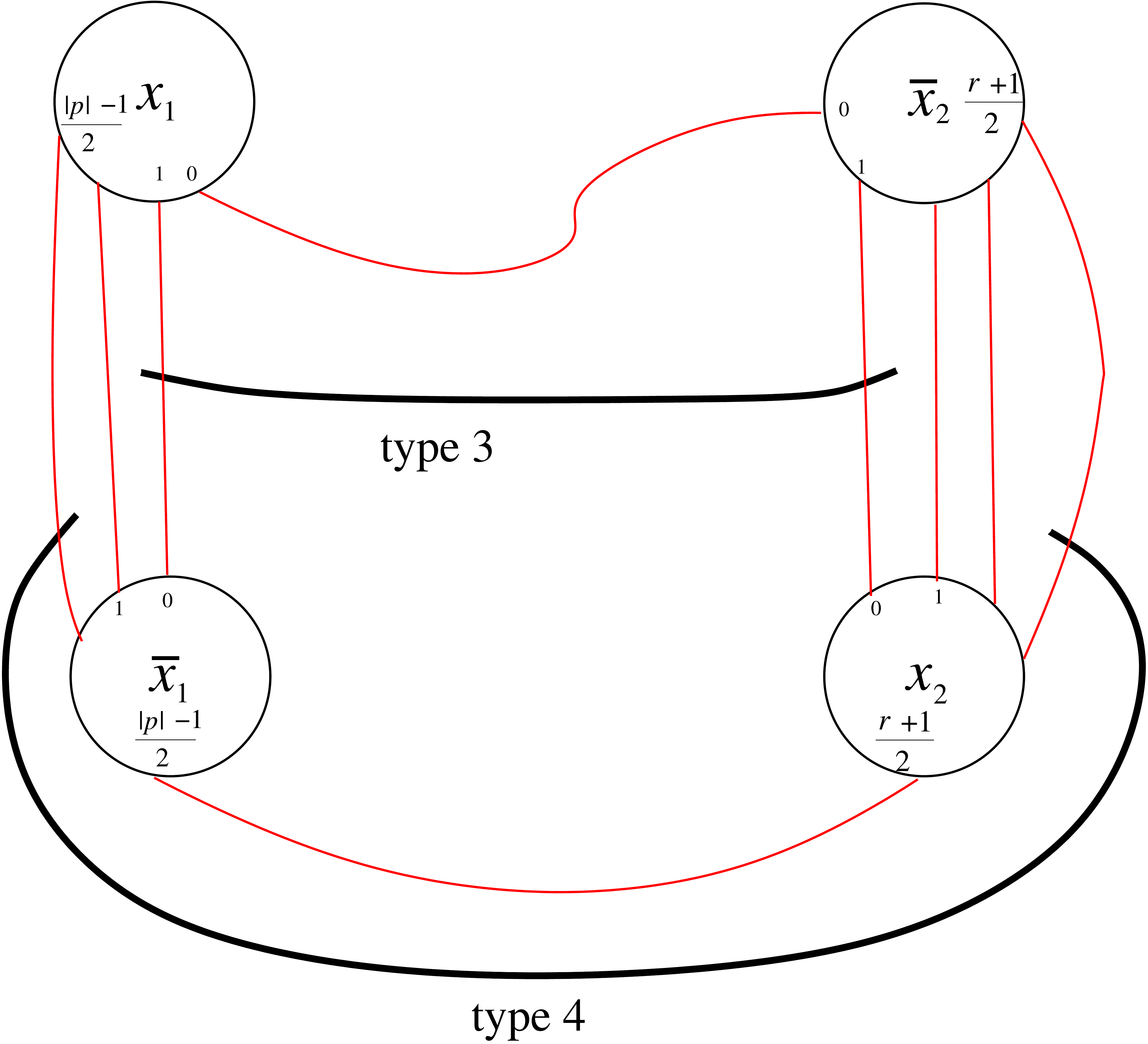}
\caption{}
\label{nega12}
\end{figure}
but, after drilling out the arc $\alpha$ of type~3 or~4, the 
new Whitehead graph contains no cut vertex, and then, by
Corollary~\ref{coro29}, $a_2$ intersects every essential disk
of~$E(\alpha)$.

For arcs of length at least three, we follow the same argument as in
Case~1, and conclude that $a_2$ intersects every essential disk
of~$E(\alpha)$.

\end{proof}

Recall that we are assuming that $\gamma$ is the core of a 1--handle
  of a one-handled circular decomposition of $E(k)$ based on $F$. In
  view of Claims~1 and~2,
as in Case~1, we see that the arc $\gamma$ encircles exactly one edge
of $G_1$, and exactly one edge of $G_2$.

There are four types of arcs of length two encircling exactly
one edge of $G_1$ and exactly one edge of $G_2$ (see Figure
\ref{neg2colors}). For, any arc
encircling two edges of $G$, one of $G_1$ and one of $G_2$
can be slid in $E(F)$ into an arc of type~1, type~2, type~3, or type~4
(Section~\ref{sec271}).
\begin{figure}[htp]
\centering
\includegraphics[height=6true cm]{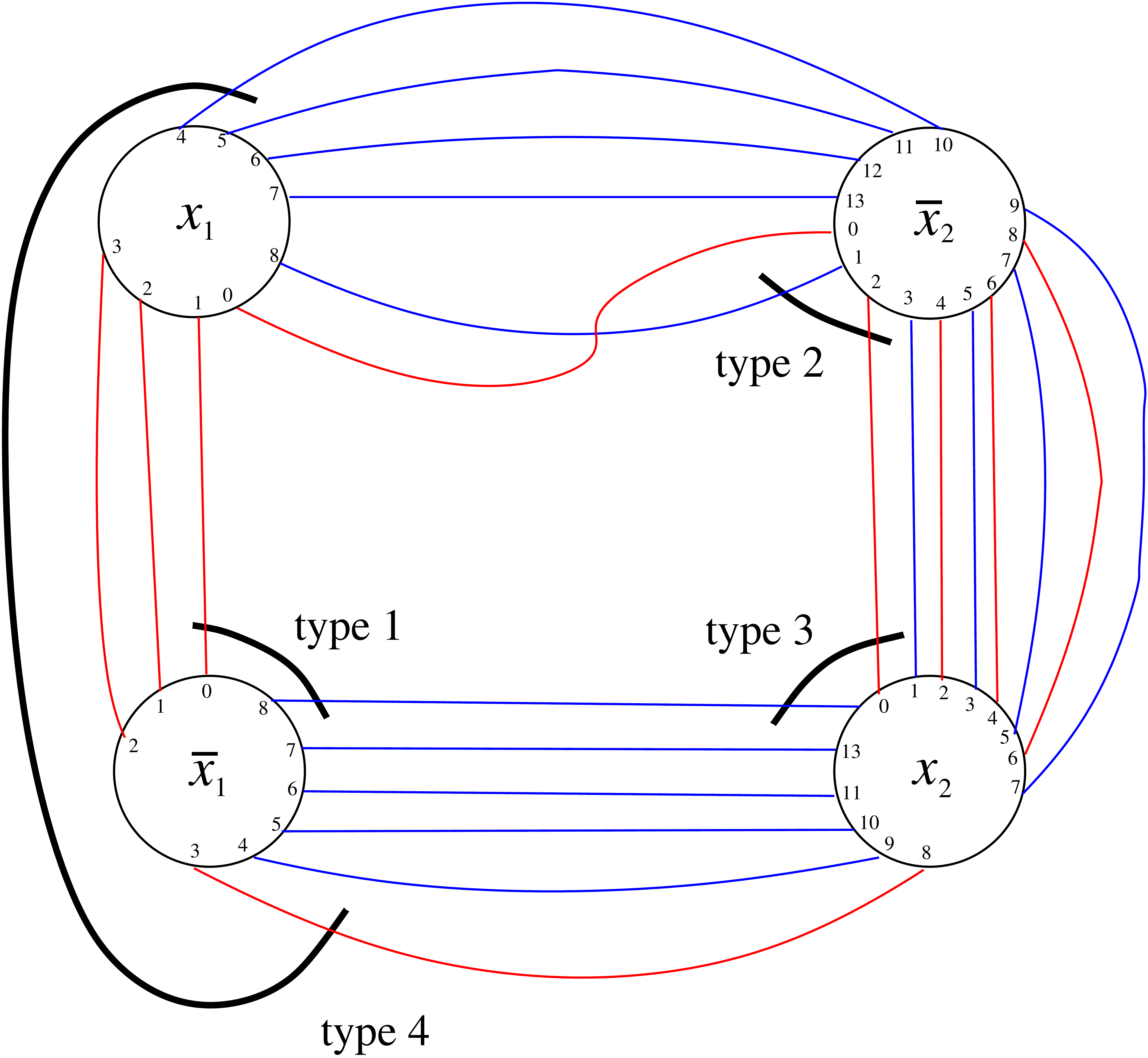}
\caption{}
\label{neg2colors}
\end{figure}

After drilling out the arc $\gamma$, if $\gamma$ is of type~1, type~2,
type~3, or type~4, the 
new Whitehead graph contains a cut vertex. 
After sliding, we end up with a graph $G'$ 
with its simple associated graph as one of the drawings in
Figure~\ref{negGraphs}. Since 
these graphs contain no cut vertex, by Corollary~\ref{coro29},
\begin{figure}[htp]
\centering
\includegraphics[height=3true cm]{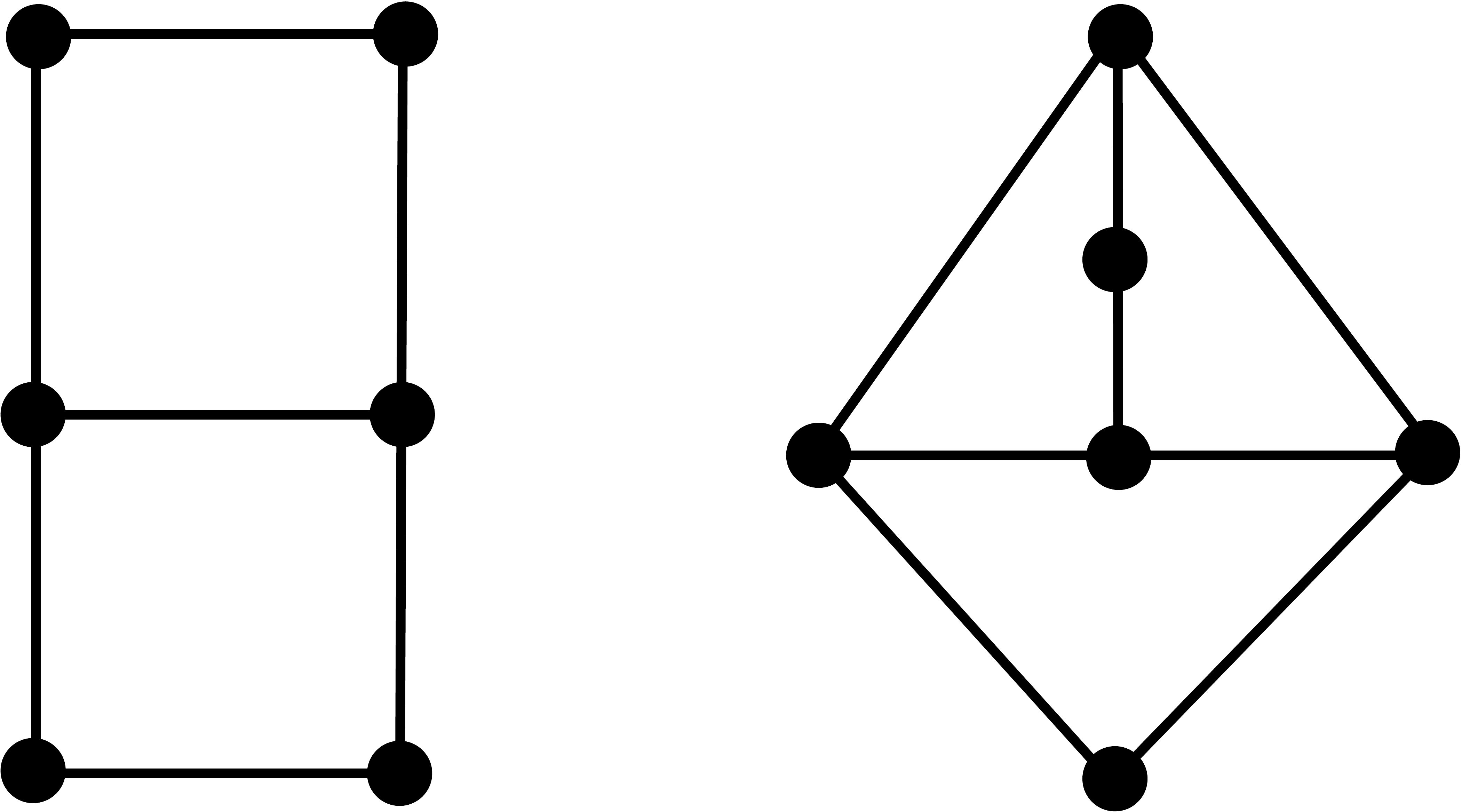}
\caption{}
\label{negGraphs}
\end{figure}
we conclude that any
essential disk in~$E(\gamma)$ intersects~$G$, and, therefore,
intersects $\Gamma\subset F$. Thus,~$h(F)\neq1$. Since
$k=P(p,q,r)$ is not fibered, 
and
$h(F)\leq2$, by Corollary~\ref{coro36}, it follows that $h(F)=2$, 
when $p\leq-5$ and $q,r\geq5$.

This finishes Case~2, and also the proof of Theorem~\ref{thm41}.

\end{proof}

\begin{corollary}
Let $k$ be the pretzel knot $P(p,q,r)$ with $|p|, |q|, |r|
\geq5$. Then $cw(k)=6$.
\end{corollary}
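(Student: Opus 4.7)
The plan is to combine Theorem~\ref{thm41} with the uniqueness of the incompressible Seifert surface of $k$. By Example~\ref{exPretzel} (invoking~\cite{godaishi}), the pretzel knot $k=P(p,q,r)$ with $|p|,|q|,|r|\geq 3$ is non-fibered and has the free genus-one surface $F$ as its unique (up to isotopy) incompressible Seifert surface. Since $k$ is a connected, non-fibered, free genus-one knot, Remark~\ref{remark37} restricts the possible values of $cw(k)$ to $4$, $6$, or $(4,\dots,4)$.

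I would first show that $k$ is almost fibered: otherwise, Remark~\ref{remark24} applied to any thin circular decomposition realizing $cw(k)$ would yield $\ell\geq 2$ pairwise non-parallel incompressible Seifert surfaces $T_1,\dots,T_\ell=F$, contradicting the uniqueness of $F$. In particular, $cw(k)$ cannot be of the form $(4,\dots,4)$, and we are left with $cw(k)\in\{4,6\}$.

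The main step is to rule out $cw(k)=4$. By Remark~\ref{remark24} the base of any thin decomposition realizing $cw(k)$ must be incompressible, hence by uniqueness it equals $F$. If $cw(k)=4$, the decomposition has the one-step form $E(k)=(F\times I)\cup B_1\cup P_1$ with $c(S_1)=4$. Since $F$ is connected and each three-dimensional $1$-handle attached along $F\times\{1\}$ is a Morse-theoretic $1$-handle whose attachment reduces the Euler characteristic of the new upper boundary by $2$, the surface $S_1$ is connected with $\chi(S_1)=\chi(F)-2|B_1|=-1-2|B_1|$, whence $c(S_1)=1-\chi(S_1)=2+2|B_1|$. The equation $c(S_1)=4$ therefore forces $|B_1|=1$, producing a one-handled circular decomposition of $E(k)$ based on $F$ and contradicting Theorem~\ref{thm41}. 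Hence $cw(k)=6$. The only substantive calculation is this Euler-characteristic bookkeeping that identifies $cw(k)=4$ with $h(F)=1$; every other ingredient is a direct appeal to previously established results.
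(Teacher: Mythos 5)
Your proposal is correct and follows essentially the same route as the paper: uniqueness of the incompressible Seifert surface (via Remark~\ref{remark24}) forces the thin decomposition to be a one-step decomposition based on the free genus-one surface $F$, and Theorem~\ref{thm41} then pins down the width. The only difference is that you spell out the Euler-characteristic bookkeeping identifying $cw(k)=4$ with a one-handled decomposition based on $F$, which the paper leaves implicit in the step ``by Theorem~\ref{thm41}, $cw(k)=6$.''
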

\begin{proof}
Since $k$ has a unique incompressible Seifert surface, by
Remark~\ref{remark24} it follows that $cw(k)\in \mathbb{Z}$. By
Theorem~\ref{thm41}, $cw(k)=6$.
\end{proof}

\begin{remark}
\label{remark45}
Theorem~\ref{thm41} gives a family of knots of genus one and handle
number two. This answers in the affirmative a question in
\cite{rurru}: Does there exist a knot $k$ with $h(k) > g(k)$?
\end{remark}

\section{Genus one essential surfaces and powers of primitive
  elements}
\label{sec5}
In this section we show that if $k$ is a free genus one knot with at
least two non-isotopic Seifert surfaces, then the free Seifert surface
of $k$ admits
a special type of spine.
This result is essential to prove the main
theorem of Section~\ref{sec6} (Theorem~\ref{thm67}).

\begin{lemma}
\label{lema51}
Let $H$ be a handlebody of genus $g\geq2$, and  let
$\alpha\subset \partial H$ be a 
simple closed curve. Assume that there is a primitive element
$p\in\pi_1(H)$ such that $\alpha$ represents an element conjugate with
$p^n$ for some 
$n\in\mathbb{Z}$, $n\neq0$. Then there is an
essential 2--disk~$D\subset H$ such that $D\cap\alpha=\emptyset$.
\end{lemma}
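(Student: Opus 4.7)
The plan is to use primitivity of $p$ to pick a meridional disk system in which $\alpha$ reads as a particularly simple cyclic word, and then to conclude that $\alpha$ is disjoint from most of the disks on purely word-theoretic grounds.

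Since $p$ is primitive in $\pi_1(H)\cong F_g$, I would extend $p$ to a free basis $\{p=x_1,x_2,\dots,x_g\}$ of $\pi_1(H)$. This basis corresponds to a complete system of meridional disks $D_1,\dots,D_g\subset H$, with $x_i$ represented by a curve meeting $D_i$ once and missing $D_j$ for $j\ne i$. Because $g\geq 2$, each $D_j$ with $j\geq 2$ is an essential properly embedded disk; my goal is to show $D_2$ (say) can be made disjoint from $\alpha$.

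Next, I would isotope $\alpha$ on $\partial H$ into minimal position with respect to the multi-curve $\bigcup_{i=1}^g\partial D_i$, eliminating all bigons between $\alpha$ and each $\partial D_i$. Reading the signed crossings of $\alpha$ along its length produces a cyclic word in the letters $x_i^{\pm 1}$ that represents the conjugacy class of $\alpha$ in $\pi_1(H)$, and, in the absence of bigons, this cyclic word is cyclically reduced. The standard bigon criterion for handlebodies then gives
\[
|\alpha\cap\partial D_i|=\#\{\text{occurrences of }x_i^{\pm 1}\text{ in the cyclic word}\},\qquad i=1,\dots,g.
\]
By hypothesis the conjugacy class of $\alpha$ is $p^n=x_1^n$, which is already cyclically reduced in the chosen basis, so the cyclic word produced by $\alpha$ in minimal position is a cyclic permutation of $x_1^n$. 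In particular no letter $x_i^{\pm 1}$ with $i\geq 2$ appears, so $\alpha\cap\partial D_i=\emptyset$ for every $i\geq 2$. Since $\alpha\subset\partial H$ and $D_i\cap\partial H=\partial D_i$, we get $\alpha\cap D_i=\emptyset$ in $H$, and $D:=D_2$ is the required essential disk.

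The only substantive technical input is the bigon criterion identifying minimal geometric intersection with cyclic word length in a handlebody, which is routine (any cancelling pair $x_i x_i^{-1}$ yields an innermost bigon with $\partial D_i$ that can be removed by an isotopy of $\alpha$ on $\partial H$, and this process terminates because intersection number strictly decreases). What does the real work is the primitivity hypothesis: it is precisely what lets one pick a basis in which $\alpha$'s conjugacy class involves only one generator, so that all of the other meridional disks are automatically disjoint from $\alpha$.
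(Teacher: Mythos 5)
The gap is in your ``bigon criterion''. Removing bigons puts $\alpha$ in minimal geometric position with respect to the fixed multicurve $\bigcup_i\partial D_i$, but it does \emph{not} make the word read off along $\alpha$ cyclically reduced, and the asserted equality $|\alpha\cap\partial D_i|=\#\{\text{occurrences of }x_i^{\pm1}\}$ is false. A cancelling pair $x_i^{\varepsilon}x_i^{-\varepsilon}$ corresponds to a subarc of $\alpha$ that leaves a copy of $D_i$ and returns to the same copy without meeting any other disk; this subarc lies in the planar surface obtained by cutting $\partial H$ along all the $\partial D_j$ (a $2g$--holed sphere), and it may be an \emph{essential} arc there, encircling other boundary circles. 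In that case there is no bigon, the two intersection points cannot be removed by an isotopy of $\alpha$ in $\partial H$, and the read word remains unreduced. The point is that $|\alpha\cap\partial D_i|$ in minimal position is an invariant of the isotopy class of $\alpha$ on $\partial H$, not of its conjugacy class in $\pi_1(H)$; free--group cancellation is realized by changing the disk system (handle slides / Whitehead moves), not by isotoping $\alpha$. Concretely, in genus two let $E$ be the essential disk obtained by band--summing two parallel copies of $D_1$ along a band running over the second handle; the twist of $H$ along $E$ is the identity on $\pi_1(H)$ up to conjugation, yet its powers applied to the standard dual curve $a_1$ (which represents $x_1$ and misses $D_2$) yield simple closed curves still representing a conjugate of $x_1$ whose essential intersection with the fixed disk $D_2$ grows without bound. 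So your conclusion ``$\alpha\cap\partial D_i=\emptyset$ for all $i\ge 2$'' --- which is strictly stronger than the lemma, since it produces a disk dual to a basis chosen in advance --- simply fails for the disk system you fixed at the outset (for such curves the disjoint disk exists, but it is the image of $D_2$ under the twist, not $D_2$ itself).

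This is exactly why the paper does not argue with a fixed dual system: it notes that the splitting $\pi_1(H)=\langle p\rangle*\langle q_2,\dots,q_g\rangle$ exhibits $\{\alpha\}$ as separable and then quotes Theorem~3.2 of Stallings' paper, whose proof is Whitehead--graph technology (as in Theorem~\ref{thm24stallings} and Corollary~\ref{coro29}) in which the reduction is carried out by Whitehead moves on the disks until a cut vertex or disconnection produces an essential disk missing $\alpha$. To repair your argument you would have to replace ``isotope $\alpha$ to minimal position'' by ``modify the disk system by Whitehead moves until the diagram of $\alpha$ is reduced'' and then extract the disk from the cut--vertex/disconnectedness phenomenon forced by separability; in general the resulting disk is not one of the meridional disks dual to your initially chosen basis $\{p,x_2,\dots,x_g\}$.
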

\begin{proof}
Consider a basis $\{p,q_2,\dots,q_g\}$ for $\pi_1(H)$. Then
$\pi_1(H)=\langle p\rangle*\langle q_2,\dots,q_g\rangle$ is a
non-trivial splitting, and 
$\alpha$ is conjugate with $p^n\in \langle p\rangle$. Then $\{\alpha\}$ is
separable in $\pi_1(H)$, and the disk $D$ is obtained by Theorem~3.2
of~\cite{stallings}. 
\end{proof}

Let~$\Gamma\cong a_1\vee a_2$ be a graph in the boundary of a genus
two handlebody $H$. We say that \emph{$a_2$ spoils disks for $a_1$}
if for any essential disk $D\subset H$ such that $D\cap
a_1=\emptyset$, the number of points $\#(D\cap a_2)\geq2$.

\begin{theorem}
\label{thm52}
Let $k\subset S^3$ be a non-trivial connected knot, and let $F\subset E(k)$ be a free genus one Seifert surface for $k$. 

There is another genus one
Seifert surface for $k$ which is not equivalent to $F$ if and only if there
exists a spine $\Gamma=a_1\vee a_2$ for $F$ in $\partial\mathcal{N}(F)$ such
that $a_1$ represents an element conjugate to $g^n$ with 
$n\geq2$ for some primitive element $g\in\pi_1(E(F))$,
and $a_2$ spoils disks for $a_1$.
\end{theorem}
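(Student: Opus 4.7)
The statement is an equivalence, so I would prove the two implications separately, building the bridge between the algebraic condition on the spine and the existence of a second Seifert surface via an annulus in the handlebody $E(F)$.

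\emph{Sufficiency $(\Leftarrow)$.} Assume a spine $\Gamma=a_1\vee a_2$ is given with $a_1$ conjugate to $g^n$ (for some primitive $g\in\pi_1(E(F))$ and $n\geq 2$), and with $a_2$ spoiling disks for $a_1$. Lemma~\ref{lema51} produces an essential disk $D\subset E(F)$ with $D\cap a_1=\emptyset$, and the ``spoiling'' hypothesis forces $\#(D\cap a_2)\geq 2$. My plan is to use $D$ to perform an annulus-swap on $F$: in $\partial E(F)$ there is an annular region neighborhood of $a_1$ because $a_1$ is non-separating in $F$; cutting $F$ along a curve parallel to $a_1$ and reattaching the two resulting sides through a band routed along $D$ produces a new spanning surface $F'$ for $k$. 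Because the surgery is along a single non-separating curve and a single disk, $F'$ remains orientable of genus one with $\partial F'=k$. Non-equivalence with $F$ is then argued by exhibiting an invariant visible in $D\cap a_2$: any equivalence of pairs $(S^3,k)$ carrying $F$ onto $F'$ would have to respect a certain geometric intersection number that equals $n$ on one side and differs on the other; the hypothesis $n\geq 2$ is what makes this difference nontrivial.

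\emph{Necessity $(\Rightarrow)$.} Assume a second genus-one Seifert surface $F'$ for $k$, not equivalent to $F$, is given. Both $F$ and $F'$ are incompressible in $E(k)$ (minimal genus), so I isotope $F'$ into minimal transverse position with $F$, also pushing $F'$ off of a collar of $k$. Using an outermost-arc argument on intersection arcs and an innermost-disk reduction on intersection circles (with incompressibility of $F$ and $F'$), I reduce $F\cap F'$ to a single essential simple closed curve $c$ lying on both surfaces; $c$ must be non-separating on $F$, for otherwise a component of $F\setminus c$ would be a Seifert surface for some sub-link of $k$ of lower genus, contradicting $g(k)=1$. Set $a_1=c$ and extend to a spine $\Gamma=a_1\vee a_2$ of $F$. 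The portion of $F'$ lying in $E(F)$ contains an essential annulus $A\subset E(F)$ with one boundary component $a_1$ and the other lying on $\partial E(F)$ and representing, up to conjugacy, a primitive element $g\in\pi_1(E(F))$; then $a_1$ is conjugate to $g^n$, and $n\geq 2$ because $n=1$ would make $A$ a parallelism forcing $F\simeq F'$. The ``spoils disks'' condition is then read as a minimality statement for the pair $(F,F')$: any compressing disk $D$ of $E(F)$ with $D\cap a_1=\emptyset$ and $\#(D\cap a_2)\leq 1$ could be used, together with an innermost argument, to further reduce $|F\cap F'|$, contradicting the minimal transverse position.

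\emph{Main obstacle.} The most delicate step will be the sufficiency direction: certifying that the surgery through $D$ produces an \emph{embedded} surface of genus exactly one and that $F'$ is genuinely inequivalent to $F$ rather than an ambient isotope. This is where both hypotheses on the spine are used together---the proper power $n\geq 2$ to guarantee nontriviality of the swap, and the spoiling condition to block any disk-driven simplification back to $F$. In the necessity direction, the corresponding hard point is producing the essential annulus $A$ cleanly from the cut-and-paste structure of $F\cap F'$, and extracting from it precisely the primitive-power data rather than a weaker conclusion.
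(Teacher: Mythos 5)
Both directions of your proposal contain genuine gaps. In the sufficiency direction your construction of the new surface is not well defined as stated: ``cutting $F$ along a curve parallel to $a_1$ and reattaching the two resulting sides through a band routed along $D$'' does not visibly produce an embedded genus one surface with boundary $k$ (a band cannot reglue two circles, and $D$ is disjoint from $a_1$), and, more seriously, your non-equivalence argument invokes ``a certain geometric intersection number'' that is never defined. This is exactly the point where the spoiling hypothesis must do its work, and no such numerical invariant is exhibited. The paper instead cuts $E(F)$ along the disk $D$ of Lemma~\ref{lema51} to get a solid torus $H_1$ in which $a_1$ winds $n\geq2$ times, replaces the annulus neighbourhood $A_1$ of $a_1$ in $F$ by the pushed-in complementary annulus $B_1'=\overline{\partial H_1-A_1}$, and keeps the $a_2$--band $B_2$; the surface $G=B_1'\cup B_2$ is then shown \emph{not} to be boundary parallel in $E(F)$ because a parallelism would have to carry $B_1'$ onto $B_1$ (not onto $A_1$, since $n\geq2$), while the condition $\#(D\cap a_2)\geq2$ forces an extra disk component of $G\cap H_1$ with boundary in $Int(B_1)$, which obstructs that parallelism. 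You would need to supply an argument of this kind; the ``invariant'' sentence cannot stand in for it.

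In the necessity direction your route is both different from the paper's and unsupported at its key steps. The reduction of $F\cap F'$ to a single essential circle $c$ is not justified, and it sits awkwardly with the fact (Scharlemann--Thompson, which the paper cites) that two such minimal genus surfaces can be isotoped to be \emph{disjoint}: in genuinely minimal position your curve $c$ need not exist at all, and your later ``spoiling'' argument --- that a disk missing $a_1$ and meeting $a_2$ at most once would ``further reduce $|F\cap F'|$'' --- has nothing left to reduce. The central conclusions are then asserted rather than proved: the existence of the essential annulus $A$, the primitivity of the element $g$, the fact that $a_1$ is conjugate to $g^n$, and $n\geq2$. The paper obtains these by taking the second surface $G$ disjoint from $F$, observing that $G$ splits $E(F)$ into two genus two handlebodies $H_0\cup H_1$, boundary-compressing $G$ in $H_0$ along a disk $D$ (so that $k$ is a $((1,0),(n,m))$--curve on $\partial H_0$), building the spine $a_1\vee a_2$ from the cores of the resulting annuli, checking the spoiling condition by showing that a disk missing $a_1$ and meeting $a_2$ at most once would make either $F$ compressible or $G$ boundary parallel, and finally proving primitivity of $v$ via the identification $\overline{E(F)-\mathcal{N}(D)}=H_1\cup_{\widehat G}V$ as a genus two handlebody together with a Seifert--van Kampen computation giving $\pi_1(E(F))\cong\langle w,v\rangle$ with $a_1$ representing $v^n$. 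None of this algebraic input appears in your sketch, and without it the ``primitive-power'' half of the statement is unproven.
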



\begin{proof}
Let $\Gamma=a_1\vee a_2$ be a spine for $F$ such
that $a_1$ represents an element conjugate to $g^n$ with 
$n\geq2$ for some primitive element $g\in\pi_1(E(F))$,
and $a_2$ spoils disks for $a_1$

Let $D\subset E(F)$ be an essential properly embedded disk such that
$a_1\cap D=\emptyset$, which is given by Lemma~\ref{lema51}. We may
assume that $H_1=\overline{E(F)-\mathcal{N}(D)}$ is a
solid torus. Let $A_1$ be a regular neighbourhood of $a_1$ in
$\partial E(F)$; then $A_1\subset \partial H_1$. Write~$B_1=\overline{\partial
  H_1-A_1}$. Since $|n|\geq2$, the annuli $A_1$ and $B_1$ are non-parallel
in~$H_1$. We push $Int(B_1)$ into $H_1$ to obtain $B_1'$, a properly
embedded annulus in $H_1$.

Let $\mathcal{N}(a_2)\subset\partial E(F)$ be a regular neighbourhood
of $a_2$ such that $A_1\cap \mathcal{N}(a_2)$ is a rectangle; then
$B_2=\overline{\mathcal{N}(a_2)-A_1}$ is a `band'
(that is, a 2--disk) such that $B_2\cap A_1=B_2\cap B_1'$ is a pair of arcs in
$\partial B_1'$. Then $G=B_2\cup B_1'$ is a genus one Seifert surface
for $k$ (we push $Int(G)$ slightly into $E(F)$ to get a properly embedded
surface in $E(F)$).

Now, $\widehat G=G\cap H_1$ is the union of the annulus $B_1'$ with the
disk components of~$\widehat B_2=B_2\cap H_1$. Notice that $\partial
\widehat B_2\subset B_1\subset\partial H_1$. 

By hypothesis $\#(a_2\cap D)\geq2$; thus,
$\widehat G$ is disconnected, and the components of $\widehat G$ are
$B_1'\cup (\text{two 2--disks of } \widehat B_2)$, and at least one
sub-disk $z\subset \widehat B_2$ with $\partial z\subset Int(B_1)$.

Since $|n|\geq2$, we cannot push $B_1'$ onto
$A_1$ in $H_1$.
Then a
  $\partial$--parallelism for $\widehat G$ in $H_1$ contains a
  $\partial$--parallelism $W$ for $B_1'$ onto $B_1$, but then $W$
contains the 
2--disk $z\subset \widehat G$. Therefore,~$\widehat G$ is not
parallel into $\partial H_1$. We conclude that $G$ is not boundary parallel in
$E(F)$ for, a $\partial$--parallelism for $G$ induces a
$\partial$--parallelism for $\widehat G$. 
It follows that $G$ and $F$
are not equivalent. This finishes sufficiency.



Now, if there is another genus one
Seifert surface for $k$ which is not equivalent to~$F$, we can find
still another non-equivalent genus one Seifert surface~$G\subset E(k)$
for $k$ such 
that $G$ and $F$ have disjoint interiors; see \cite{Charle-Tonson}. 
We write $k=G\cap\partial E(F)$.

The surface $G$ splits $E(F)$ into two handlebodies, $H_0\cup
H_1=\overline{E(F)-\mathcal{N}(G)}$, of genus two 
for $H_0$ and~$H_1$ are
irreducible and, since $G$ is $\pi_1$--injective into $H_0$ and $H_1$,
it follows that  $H_0$ and $H_1$ are $\pi_1$--injective into $E(F)$;
therefore, $H_0$ and $H_1$ have free fundamental groups. We assume $\partial
H_i=G\cup (F\times\{i\})$ plus a neighbourhood of~$k$, $i=0,1$. By
considering a system of disks for 
the handlebody~$E(F)$, we see that there is a disk $D\subset E(F)$ that
$\partial$--compresses $G$ in $E(F)$, and $D$ is contained in, say $
H_0$, and
is properly embedded in $H_0$.

Then $k$ is a $((1,0),(n,m))$--curve in $\partial H_0$ (Lemma~4.3 of
\cite{tsutsu}) with $|k\cap D|=2$.
See Figure~\ref{fig3}.
\begin{figure}
\centering
\includegraphics[height=50mm]{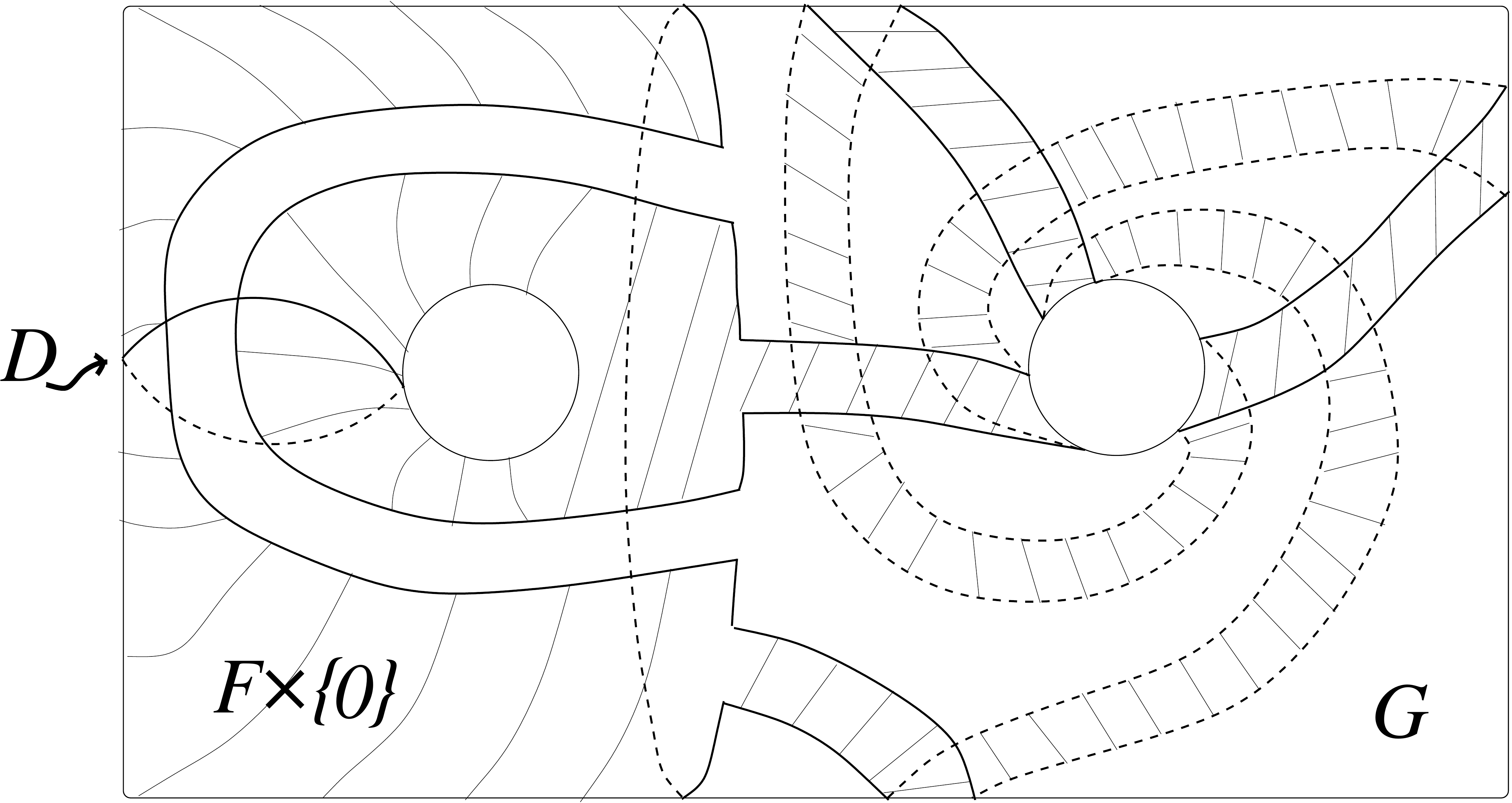}
\caption{Surfaces $G$ and $F\times\{0\}$ in $H_0$.}
\label{fig3}
\end{figure}

Cutting $H_0$ along $D$ we obtain a solid torus $V\subset H_0$ such
that $\widehat G=G\cap V$ is an $(n,m)$--torus annulus in $\partial
V$; and the complementary annulus $\widehat F=\overline{\partial V-\widehat
G}$ contains, and is isotopic, to $(F\times\{0\})\cap V$ in $\partial
V$ with an isotopy fixed outside a regular neighbourhood of~$D$.

Let $a_1\subset F\times\{0\}$ be the core of the annulus $\widehat F$, and let
$b_1\subset \widehat G$ be the core of the annulus $\widehat G$.
\begin{figure}
\centering
\centerline{\includegraphics[height=50mm]{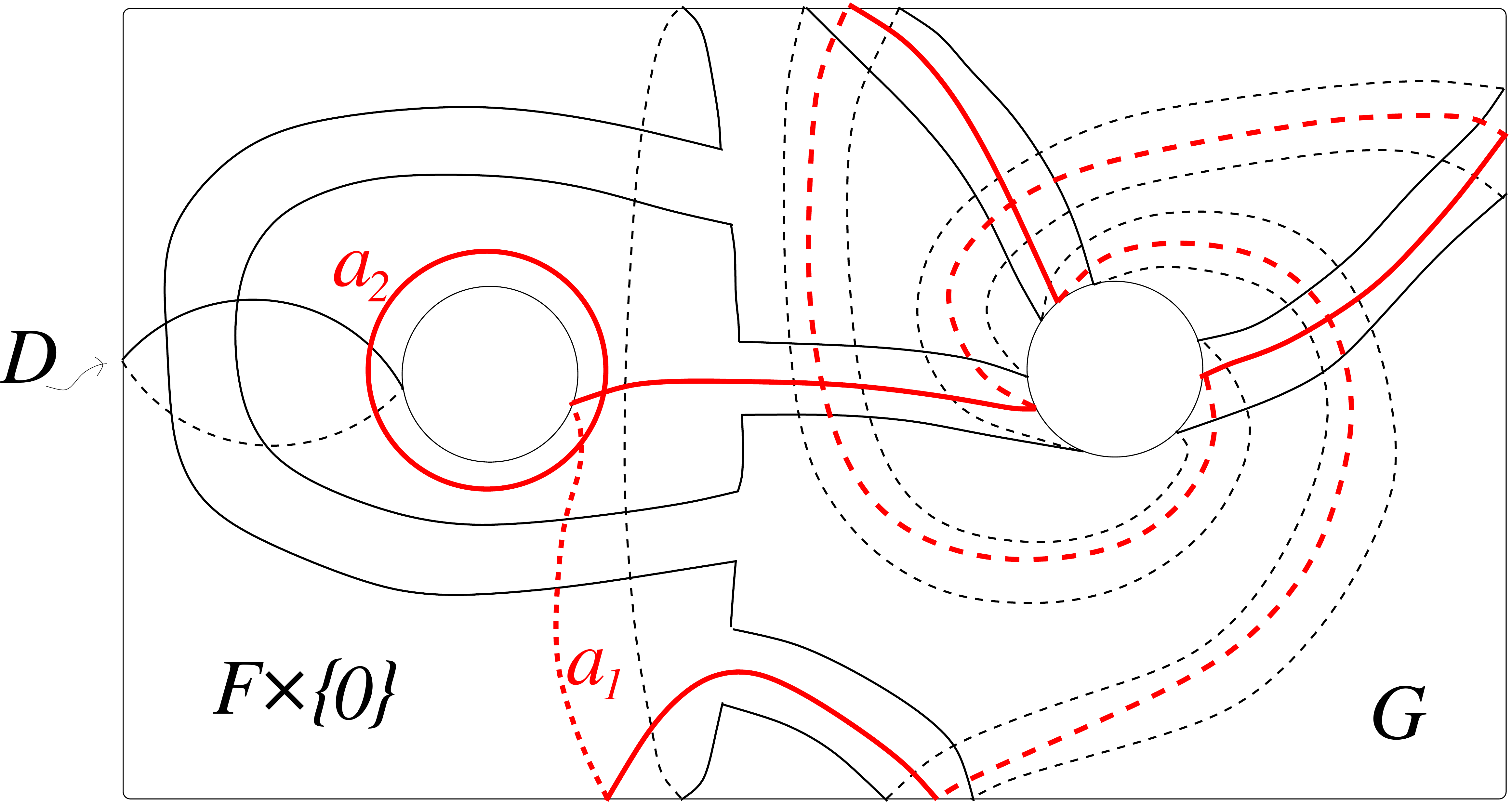}
\includegraphics[height=50mm]{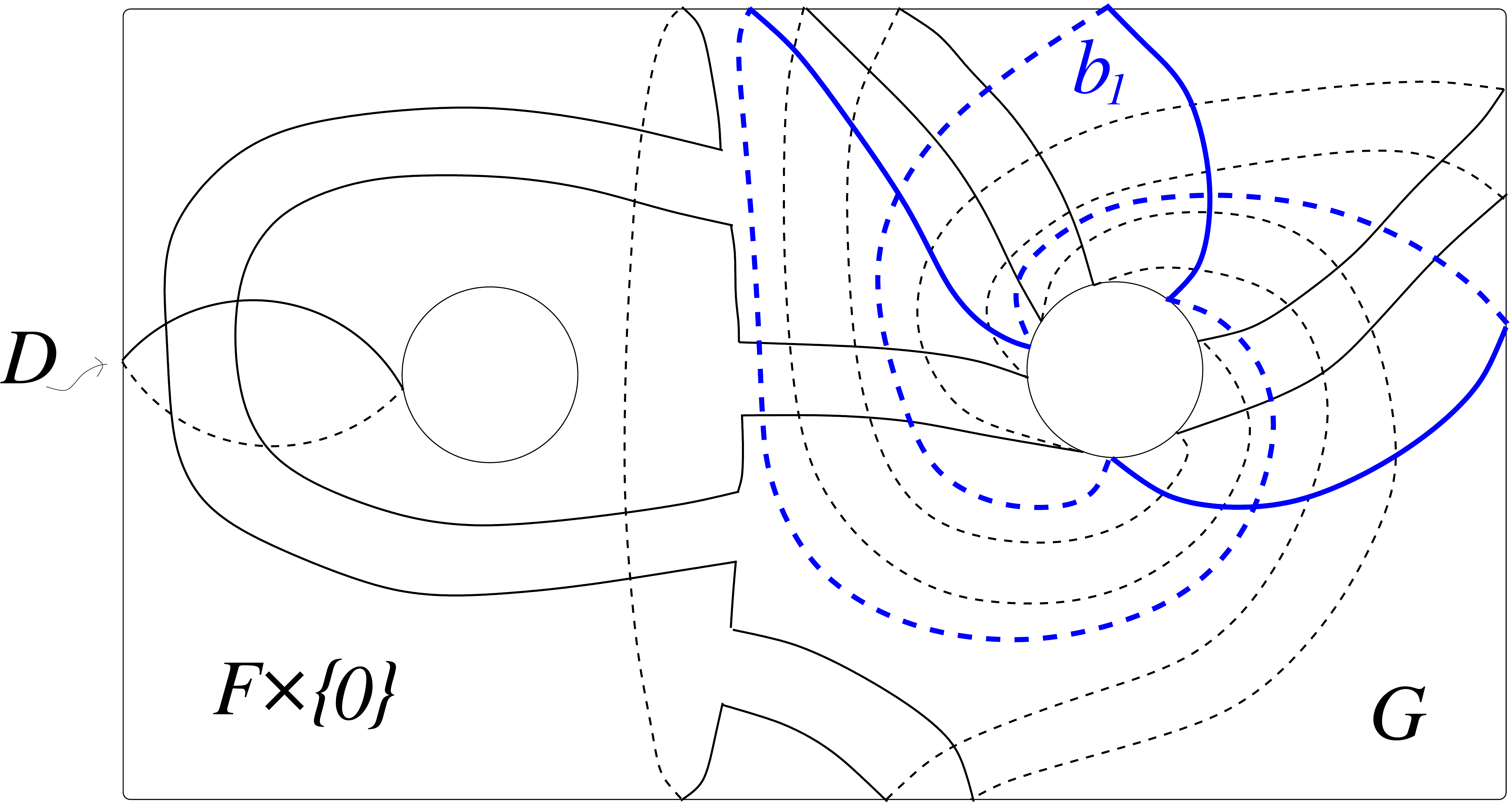}}
\caption{$\Gamma=a_1\vee a_2$ and $b_1$.}
\label{fig4}
\end{figure}

Let $C'\subset\partial V$ be a 2--disk that contains the pair of disks
$\partial V\cap\mathcal{N}(D)$, and let $C\subset H_0$ be a properly embedded
  disk with $\partial C=\partial C'$. Now let $Z\subset H_0$ be a
  meridional disk such that $Z\cap C=\emptyset$.
Then $\widetilde
F=(F\times\{0\})\cap(\overline{H_0-\mathcal{N}(Z)})$ contains a
(1,0)--annulus $A$
in the solid torus $\overline{H_0-\mathcal{N}(Z)}$. Let $a_2\subset
Int(F\times\{0\})$ be the core of $A$,
where we can arrange
that $a_1\cap a_2$ is just one point. Then $\Gamma=a_1\vee a_2$ is a
spine for $F$. See Figure~\ref{fig4}.

The curve $a_2$ spoils disks for $a_1$ in $E(F)$ for, otherwise, there is an
essential disk $D\subset E(F)$ such that $D\cap a_1=\emptyset$, and the
number of points $\#(D\cap a_2)<2$. If~$D\cap a_2=\emptyset$, since
$\Gamma$ is a spine for $F$, the surface $F$ is contained in the solid
torus~$E(D)\subset E(F)$; it follows that $F$ is compressible in
$E(D)$, and, 
thus, $F$ is compressible in $E(F)$. But, since $k$ is non-trivial, and
$g(F)=1$, $F$ is incompressible in $E(k)$. 
Then $D\cap a_2$ is just one point, and $D\cap\partial F$ is a set of two
points. We may assume that $D$ intersects $k=\partial G$ in exactly
two points. Since $G$ is incompressible, we may arrange that $D\cap G$
is just one arc. Now, this arc is essential in $G$ for, otherwise, we
can slide~$G$ along $D$, and obtain $G'$ homotopic to $G$ in $E(F)$ such
that $G'$ is  contained in
the solid torus $E(D)$; then $G'$ is not $\pi_1$--injective, and,
since~$G$ and $G'$ are homotopic embeddings, thus, $G$ is not 
$\pi_1$--injective; but that makes $G$ compressible. Then~$\widehat G=G\cap
E(D)$ is an annulus, therefore, $\widehat G$ is parallel into~$\partial
E(D)$. Using the disk~$D$ we can extend this parallelism to a
parallelism of $G$ into~$\partial E(F)$,
contradicting that $G$ is essential in $(E(F),k)$.

Now, $a_1\subset F\times\{0\}$ represents, up to conjugacy, the same
element as $b_1\subset G$ in~$\pi_1(H_0)$ for, they are disjoint curves
on a torus, and therefore, parallel.

Observe that, since $G$ is not parallel to $F\times\{0\}$, we have
$|n|\geq2$. In particular~$\widehat G$ and $\widehat F$ are not parallel 
in $V$.

We now explore $H_1$. 

Recall $D$ is a $\partial$--compression disk for $G$ in $E(F)$; in
particular $D\cap\partial E(F)$ is an arc. It follows that, to recover
$E(F)$ from $\overline{E(F)-\mathcal{N}(D)}$, we attach to
$\overline{E(F)-\mathcal{N}(D)}$ the 3--ball $\mathcal{N}(D)$ along a disk. Then
$\overline{E(F)-\mathcal{N}(D)}$ is a genus 2 handlebody. In
fact~$E(F)$ is a regular neighbourhood of
$\overline{E(F)-\mathcal{N}(D)}$. In particular, the inclusion induces
an isomorphism $\pi_1(\overline{E(F)-\mathcal{N}(D)})\rightarrow\pi_1(E(F))$.

Since $\overline{E(F)-\mathcal{N}(D)}=H_1\cup_{\widehat G}V$, then
$H_1\cup_{\widehat G}V$ is a genus two handlebody. Therefore, the core
$b_1$ of $\widehat G$ represents a primitive element
$\beta_1\in\pi_1(H_1)$ for, if~$\pi_1(V)=\langle v;-\rangle$, then
$b_1$ represents $v^n$, which is not primitive in $V$. The element~$\beta_1$
is part of a basis, say, $\pi_1(H_1)=\langle w,\beta_1:-\rangle$. By
Seifert-van Kampen, 
$\pi_1(E(F))\cong \pi_1(H_1\cup_{\widehat G}V) =\langle
w,\beta_1,v:\beta_1=v^n\rangle\cong \langle w,v:-\rangle$. That is, $v$ is
primitive in $\pi_1(E(F))$, and $b_1$ represents $v^n$.

\end{proof}


\section{Free genus one knots are almost fibered}
\label{sec6}
In this section we show that all free genus one knots are almost
fibered. We give here an outline of the plan of the proof:

Start with a non-fibered free genus one knot $k$ with a genus one free
Seifert surface~$F\subset E(k)$. If $k$ has a unique
Seifert surface, then $k$ is almost fibered
(Remark~\ref{remark24}). If $k$ were not almost fibered, then
as in Remark~\ref{remark37}, $k$ has a genus one Seifert surface not
isotopic to $F$. By Theorem~\ref{thm52}, there is a
spine~$\Gamma=a_1\vee a_2$ for $F$ in $\partial\mathcal{N}(F)$ such
that 
$a_1$ represents an element conjugate to~$g^p$ with $p\geq2$ for some
primitive element $g\in\pi_1(E(F))$, and $a_2$ spoils disks
for~$a_1$. By Lemma~\ref{lema51}, we can find an essential disk 
$\Delta\subset E(F)$ with $\Delta\cap a_1=\emptyset$, and
the exterior
$E(\Delta)=\overline{E(F)-\mathcal{N}(\Delta)}$ is the disjoint union of two
  solid tori, $V_0,V_1$ with, say, $a_1\subset\partial V_0$. We regard
  $\Delta\subset \partial V_0$. Then $\Gamma\cap V_0$ consists of the
  curve $a_1$, which is a $(p,q)$--curve in $V_0$, and an arc with
  endpoints on $\partial\Delta$ intersecting $a_1$ in exactly one
  point, and a set of parallel arcs with endpoints on $\partial\Delta$
  which are disjoint with $a_1$. See Figure~\ref{fig41}.

In Section~\ref{subsec61} we show how to find a properly embedded arc
in $V_0$ disjoint with~$\Gamma$ which, in Section~\ref{subsec62}, is shown to  be the core of the 1--handle
of a one-handled circular decomposition for $E(k)$ based on $F$.
 In 
this analysis, the disk $\Delta$ is regarded as `unreachable', and
should be thought as very near the point at infinity. That is, all
homeomorphisms in this subsection will fix point-wise the disk
$\Delta$.

\subsection{Handles for torus manifolds} 
\label{subsec61}
Let $p$ and $q$ be a pair of coprime integers. 
Consider the points~$\{s_\ell\}_{\ell=1}^{p} \subset S^1$ with
$s_\ell=e^{2\pi i\ell/p}$; also let $\widetilde V$ be 
the cylinder~$D^2\times I$, and write $s_\ell^I=s_\ell\times I\subset
\widetilde V$. The 
rotation~$\rho_q$ of angle $2\pi q/p$ on~$D^2$ gives a
quotient~$P:(\widetilde V,\cup_{\ell=1}^{p} s_\ell^I)\rightarrow (V,\alpha)$, 
where $V$ is the solid torus obtained from $\widetilde V$ by
identifying~$(z,0)$ with $(\rho_q(z),1)$ for each $z\in D^2$,
and~$\alpha$ is the simple closed curve on~$\partial V$ obtained as
the image of the union $\cup_{\ell=1}^{p}s_\ell^I$ in this quotient.  
The rotation~$\rho_q$ acts on $\{s_\ell\}_{\ell=1}^{p}$ as the cyclic
permutation of order $p$ such that $\rho_q(s_i)=s_{i+q}$ where
subindices are taken~$\mod p$.
We consider also a
fixed point~$\infty\in\alpha$, the `point at infinity'. The
homeomorphism type of the pair $(V,\alpha)$ is
called \emph{the $(p,q)$--torus sutured manifold}, or simply
\emph{the~$(p,q)$--manifold}.
Throughout this section we assume $0<q<p$. 
Notice that the $(p,q)$--torus sutured
manifold $(V,\alpha)$ is not a sutured manifold, but~$\alpha$ is a
spine of a small regular neighbourhood
$\mathcal{N}(\alpha)\subset\partial V$, and the pair
$(V,\mathcal{N}(\alpha))$ is a true sutured manifold with suture
$\alpha$.

In the following, we perform several operations on the
$(p,q)$--manifold (drilling of arcs, homeomorphisms, etc.), and it will
be done in such a way that the
point at infinity of the manifold will remain fixed.

Let $x\subset V$ be the meridional disk $P(D^2\times\{0\})$. From
the pair $(\widetilde V,\cup_{\ell=1}^{p} s_\ell^I)$ we give a
Whitehead diagram for the $(p,q)$--manifold $(V,\alpha)$
associated to $x$ as follows:

We regard $\tilde V=D^2\times I$ as the exterior $E(x)\subset V$, and
write $x$ and $\bar x$ for $D^2\times\{0\}$ and $D^2\times\{1\}$,
respectively. The arcs $s_1^I,\dots,s_p^I$ are the edges of $G$, the
corresponding Whitehead graph with fat vertices $x$ and $\bar x$. To
obtain a Whitehead diagram, we have to number the endpoints of
$s_1^I,\dots,s_p^I$. In a plane projection of the graph $G$, we assume
that the unbounded face of $G$ contains the edges $s_q^I$ and $s_{q+1}^I$.
See Figure~\ref{fig42}. The point at infinity is either the middle
point of $s_q^I$, or the middle point of $s_{q+1}^I$. If~$\infty\in
s_q^I$, then we rename $v_j=(s_j,0)$ and $\bar
v_j=(\rho_q(s_j),1)=(s_{j+q},1)$; if~$\infty\in s_{q+1}^I$, we rename
$v_j=(s_{j+q},0)$ and $\bar v_j=(\rho_q(s_{j+q}),1)=(s_{j+2q},1)$
where subindices are taken $\mod{p}$. In
any case, we number the point $v_i$ with the number~$i$, and the point
$\bar v_j$ with the number~$j$ ($i,j=1,\dots,p$). Also, we write
$\alpha_i$ for the edge of~$G$ such that $v_i\in\alpha_i$.
\begin{figure}
\centering
\centerline{\includegraphics[height=6true cm]{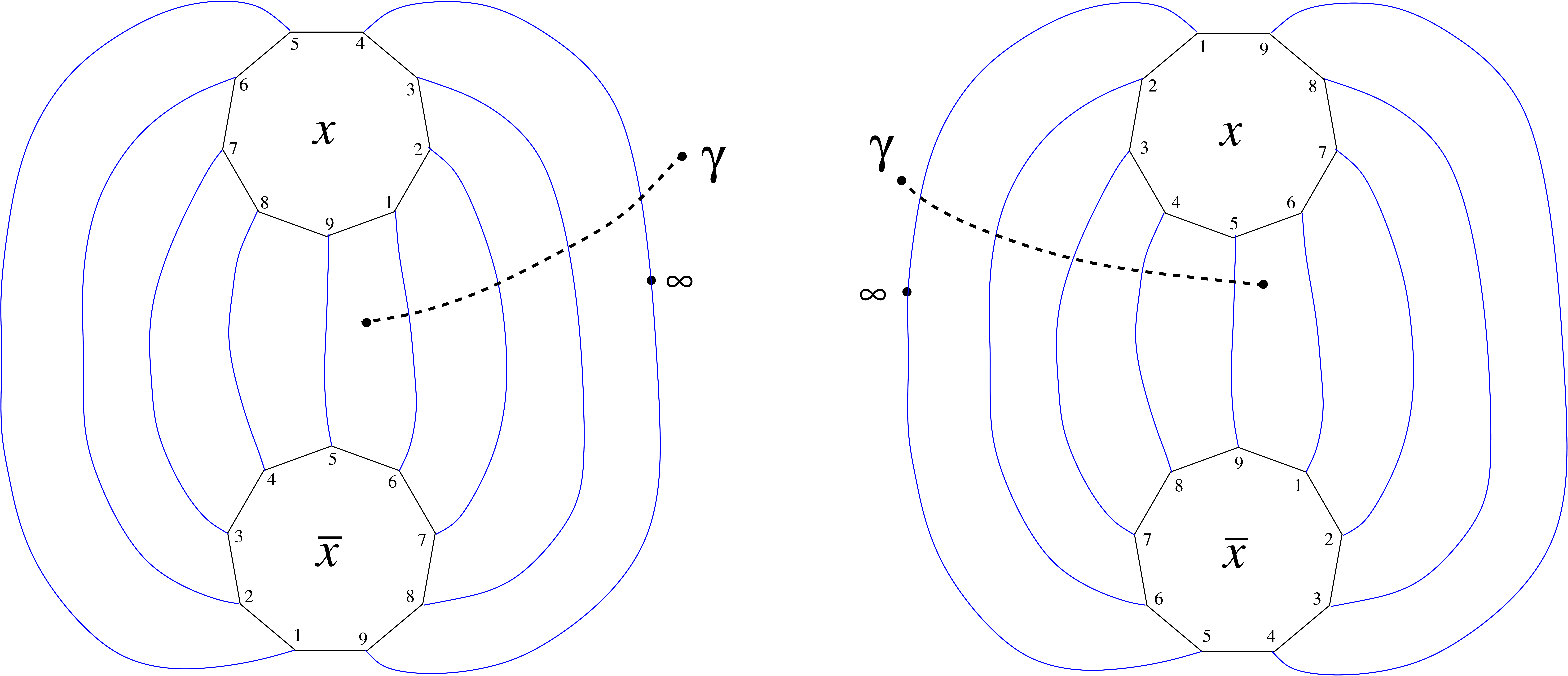}}
\caption{Whitehead diagrams for the (9,4)--manifold and the (9,5)--manifold}
\label{fig42}
\end{figure}
This diagram and the corresponding Whitehead graph are called
\emph{the~$(p,q)$--diagram} and \emph{the~$(p,q)$--graph},
respectively. Notice that the edge~$\alpha_1$ connecting $x$ with
$\bar x$ starting at the point numbered $1\in x$
ends at the point numbered $p-q+1\in\bar x$.

\begin{remark}
\label{remark61}
Consider a Whitehead diagram of a pair $(V,\alpha)$ associated to $x$
where~$V$ is a solid torus, $\alpha$ is a simple closed curve on
$\partial V$, and $x$ is a meridional disk of $V$. If in the
fat vertices of the Whitehead diagram of $(V,\alpha)$, the points
corresponding to 
ends of edges are numbered with elements of the set
$\{1,\dots,p\}$ consecutively in the positive (negative) direction on
$x$ (on $\bar x$), in a compatible way with the gluing homeomorphism
to recover the $V$, then if the edge connecting $x$ with $\bar x$
starting at the point 
numbered $1\in x$
ends at the point numbered $t\in\bar x$,
then~$t=p-q+1$; that is, the Whitehead diagram corresponds to the $(p,q)$--torus
sutured manifold with $q=p-t+1$. 
\end{remark}

Let $(V,\alpha)$ be the $(p,q)$--torus sutured manifold, and let $G$ be
the Whitehead graph of $(V,\alpha)$ with respect to a meridional disk
$x\subset V$.  
Let $\gamma$ be a properly embedded
arc in $V$, such that $\gamma$ is around the vertex $x$ in the
Whitehead diagram of $(V,\alpha)$ with respect to $x$, and $\gamma$
encircles the edges $\alpha_1,\dots,\alpha_q$.
Also, assume that~$\gamma$ lies `above' the point~$\infty\in\alpha$, that is,
$\gamma$ is between $\infty$ and $x$. See
Figure~\ref{fig42}. The arc $\gamma$ is called \emph{the canonical
  2--handle of length $q$} for the $(p,q)$--manifold. Note that
the arc $\gamma$ is the co-core of a 2--handle in $V$.

If we drill out the canonical 2--handle of length $q$, we obtain a
Whitehead diagram with respect to the system of disks $x,z\subset
E(\gamma)\subset V$ where $z$ is the obvious $\partial$--parallelism
disk for $\gamma$. See Figure~\ref{fig43}.  
\begin{figure}
\centering
\centerline{\includegraphics[height=9true cm]{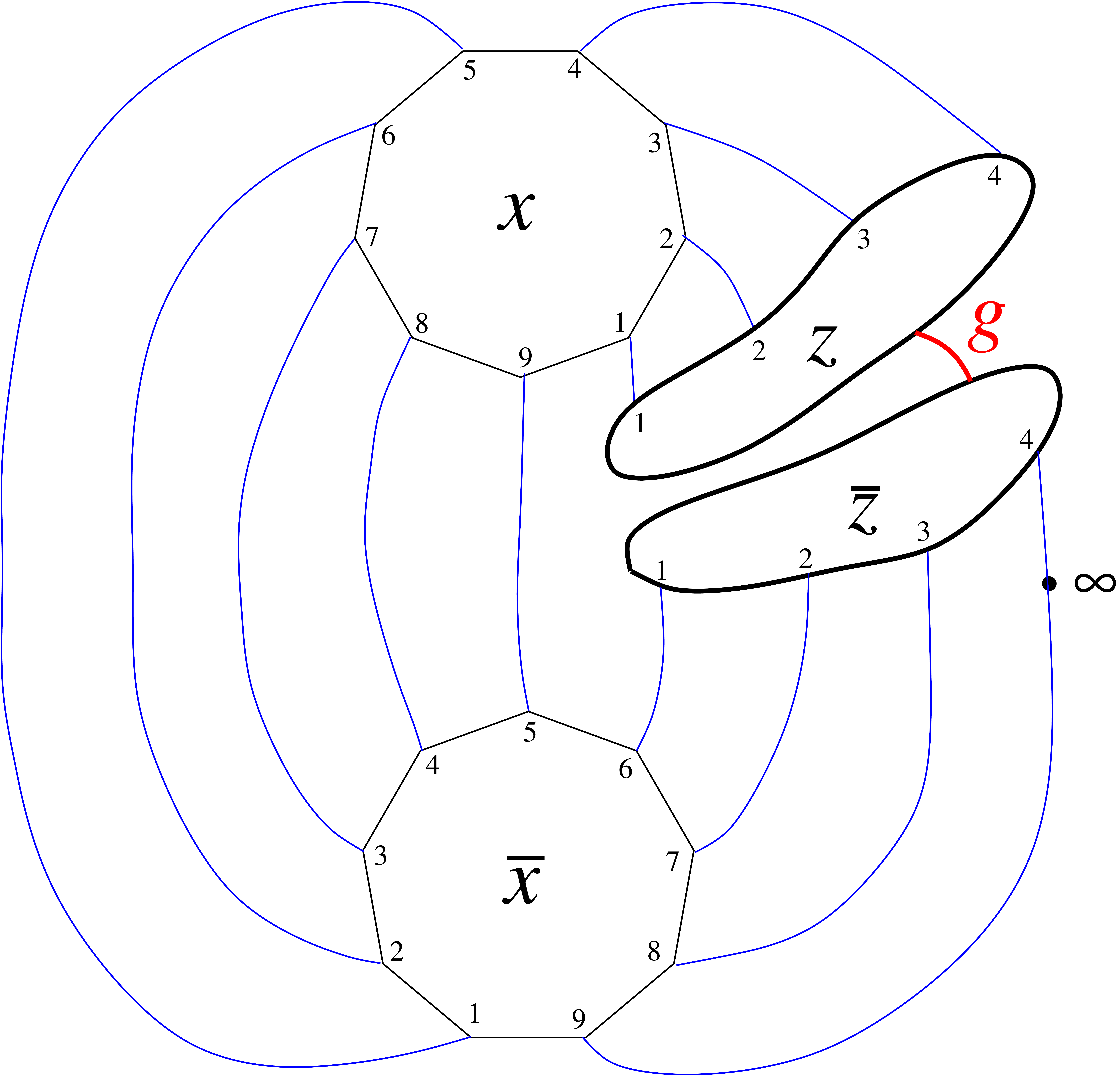}}
\caption{}
\label{fig43}
\end{figure}
We refer to this Whitehead
diagram as the \emph{Whitehead diagram obtained by drilling out the
  canonical 2--handle of length $q$} of the $(p,q)$--manifold. Notice that the arc $g$ in
Figure~\ref{fig43} is a `longitude' for the handle defined by
$z$. That is, if we glue back the disks $z$ and $\bar z$ and kill the
longitude $g$ with a 2--handle, we recover the Whitehead diagram of the
$(p,q)$--manifold. In practice, we just join the ends of the edges
in~$z$ with the 
ends of the edges in $\bar z$ with parallel arcs on the diagram, and
delete the disks $z$ and~$\bar z$ from the picture, and we  get
the Whitehead diagram of the $(p,q)$--manifold back.

Let $G$ be the graph of the Whitehead diagram obtained by drilling out the
canonical 2--handle of length $q$ of the $(p,q)$--manifold. Then $G$ is a
graph with four fat vertices $x,\bar x,z$, and $\bar z$; there are
$q$ edges connecting $z$ and $x$; there are $q$ edges 
connecting~$\bar z$ and $\bar x$; and there are $p-q$ edges connecting $x$
with $\bar x$. Compare with Figure~\ref{fig43}. Note that $x$ is a cut vertex
of $G$ (and $z$ and $\bar z$ are \emph{not} cut vertices); then we can slide the handle corresponding to $z$ along
the handle defined by $x$ 

After sliding, if the new disk $x$ is still a cut vertex,
we can again slide the new disk $z$ along the new disk $x$, and
so on. Let $G'$ be the image of the graph~$G$ after~$\kappa$ handle
slides of $z$ along $x$. The graph $G'$ is called \emph{the
  $\kappa$--slid graph obtained from the $(p,q)$--graph~$G$}. 

\begin{lemma}
Let $p,q$ be a pair of coprime integers, $0<q<p$,
and assume that
$$p=\kappa_1 q+r_1, \textrm{ with } 0\leq r_1<q, \textrm{ and } \kappa_1\geq 1$$ 
Let $G$ be the graph of the Whitehead diagram obtained by 
drilling out the canonical 2--handle of length~$q$ of the
$(p,q)$--manifold, and let $G'$ be the $\kappa_1$--slid graph obtained
from the $(p,q)$--graph~$G$. Then $G'$ is the graph of 
 the Whitehead diagram
obtained by drilling out the canonical 2--handle of length $r_1$ of the
$(q,r_1)$--manifold. The point at infinity is a fixed point of these
handle slides.
\end{lemma}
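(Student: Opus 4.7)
My plan is to prove this lemma by direct combinatorial calculation on the Whitehead diagrams, tracking edge counts and endpoint numberings through each handle slide. I would first make the initial graph $G$ explicit: its fat vertex $x$ is a cut vertex, with the $q$ edges $z$--$x$ hanging off of it. These edges are the halves near $x$ of the encircled strands $\alpha_1,\ldots,\alpha_q$ (inheriting numberings on $x$ from the $(p,q)$-diagram), while the $q$ edges $\bar z$--$\bar x$ are their mirror halves near $\bar x$, and the $p-q$ edges $x$--$\bar x$ are the untouched strands $\alpha_{q+1},\ldots,\alpha_p$. The numbering on $z$ matches that on $\bar z$ by the identity permutation, since re-gluing them recovers the original $\alpha_i$'s.

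Next I would carry out one handle slide of $z$ along $x$ using the mechanics of Section~\ref{sec27}. Geometrically, the slide replaces $z$ by the band sum $z' = z \cup (\textrm{band to } x)$ across the cut-vertex bottleneck. At the level of the graph, this absorbs the $q$ edges $z$--$x$ into $q$ of the $p-q$ edges $x$--$\bar x$, producing $q$ new edges $z$--$\bar x$ (the $q$ edges $\bar z$--$\bar x$ are unchanged) and reducing the $x$--$\bar x$ count to $p-2q$. After relabeling the new $x$ and $\bar x$, the resulting graph is isomorphic to the Whitehead diagram obtained by drilling the canonical length-$q$ 2--handle of the $(p-q,q)$-manifold, with the point at infinity preserved. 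Iterating, after $\kappa_1 - 1$ such slides one arrives at the drilled $(q+r_1,q)$-diagram.

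The $\kappa_1$-th slide is the decisive one. At this stage the $x$--$\bar x$ count has dropped to $r_1 < q$, so a further slide cannot reduce the $x$--$\bar x$ edges by another block of $q$; instead, the roles of $p$ and $q$ exchange, mirroring the Euclidean step $(p,q) \leadsto (q,r_1)$. After this final slide the graph has $r_1$ edges $z$--$x$, $r_1$ edges $\bar z$--$\bar x$, and $q-r_1$ edges $x$--$\bar x$, matching the edge counts of the Whitehead diagram obtained by drilling the canonical length-$r_1$ 2--handle of the $(q,r_1)$-manifold. To identify the resulting graph with that specific diagram (rather than an arbitrary graph with the same edge counts), I would invoke Remark~\ref{remark61} and verify that the edge leaving position $1 \in x$ lands at position $q-r_1+1 \in \bar x$; this follows from tracking how the cyclic numberings shift under each of the $\kappa_1$ slides. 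The main difficulty I anticipate is precisely this bookkeeping of numberings together with the requirement that the point at infinity stays fixed: the raw edge counts follow from straightforward arithmetic, but the identification with the correct target diagram is the substantive content of the lemma.
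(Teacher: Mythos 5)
Your strategy is the same as the paper's: run the Euclidean algorithm through the diagram by performing the $\kappa_1$ slides of $z$ along $x$, track edge counts and endpoint numberings, identify the result via Remark~\ref{remark61}, and observe that $\infty$ is fixed because $\bar x$ and $\bar z$ are never moved. The problem is that the one place where you actually describe the effect of a slide is wrong, and the bookkeeping you correctly identify as the substantive content is only promised, not done. After one slide of $z$ along $x$ you claim the graph has $q$ edges $z$--$\bar x$, $q$ edges $\bar z$--$\bar x$ and $p-2q$ edges $x$--$\bar x$. That is not a Whitehead diagram of anything: $x$ and $\bar x$ are the two copies of a single properly embedded disk, so their valences must be equal, whereas in your graph they are $p-2q$ and $p$. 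What actually happens (and what the paper computes) is that the $q$ edges joining $z$ to $x$ persist but their ends move from the points of $x$ numbered $1,\dots,q$ to those numbered $q+1,\dots,2q$, while $q$ of the $x$--$\bar x$ edges are absorbed and the $\bar z$--$\bar x$ edges are untouched; this gives the drilled $(p-q,q)$--diagram with $x$ and $\bar x$ both of valence $p-q$. Likewise, after the final slide the $q-r_1$ remaining edges join $z$ to $\bar z$ (the valence-$q$ pair is $z,\bar z$), not $x$ to $\bar x$ as you assert; for the statement itself this is absorbed by the $\xi/\zeta$ convention of Remark~\ref{remark64}, but it matters for the iteration in Corollary~\ref{coro63} and it shows the slide mechanics were not actually worked out.

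The deferred verification is essentially the entire proof. One must show that after $j$ slides the $z$--$x$ edges end at the points of $x$ numbered $jq+1,\dots,(j+1)q$, hence after $\kappa_1$ slides only the points numbered $\kappa_1q+1=p-r_1+1,\dots,p$ of $x$ remain occupied, giving $r_1$ edges $x$--$z$, $r_1$ edges $\bar x$--$\bar z$ and $q-r_1$ edges $z$--$\bar z$; and one must then check the numbering, namely that the edge with end numbered $1$ in $z$ has its other end at $p-r_1+1\in x$, and the edge with end $p-r_1+1\in\bar x$ ends at $q-r_1+1\in\bar z$, which is exactly the pattern of the diagram obtained by drilling the canonical 2--handle of length $r_1$ out of the $(q,r_1)$--manifold (this is the role your appeal to Remark~\ref{remark61} must play, applied to the valence-$q$ vertex pair, which after the last slide is $z,\bar z$ rather than $x,\bar x$). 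The fixedness of $\infty$ is then immediate since every slide modifies only $z$ near $x$. Finally, your block argument tacitly assumes $r_1>0$; the case $q=1$, $\kappa_1=p$, $r_1=0$ allowed by the statement needs the separate (easy) treatment the paper gives it. As it stands the proposal is a correct plan along the paper's lines, but not yet a proof.
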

\begin{figure}
\centering
\centerline{\includegraphics[height=9true cm]{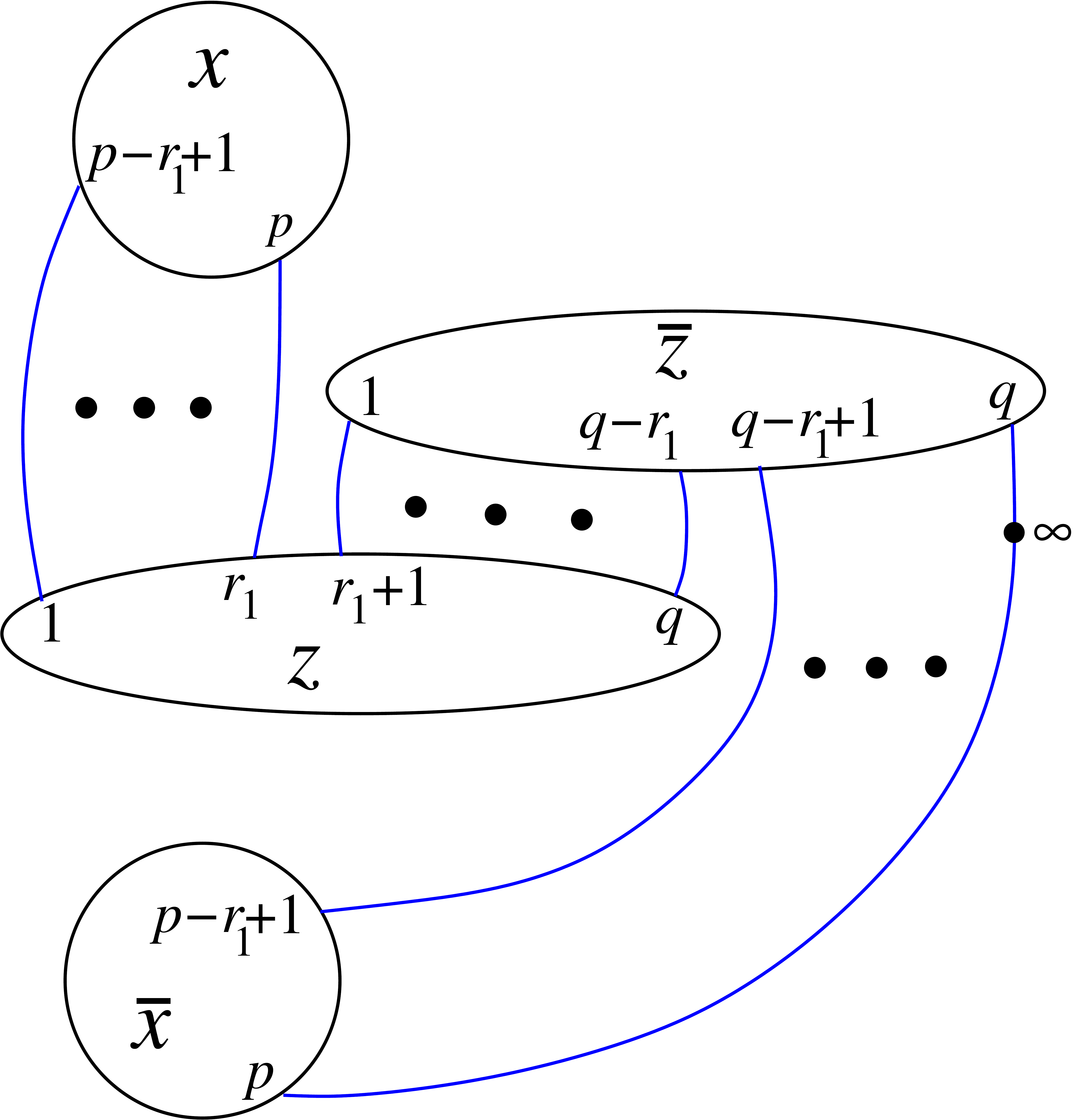}}
\caption{After sliding $z$ along $x$}
\label{fig44}
\end{figure}
\begin{proof}
In the Whitehead
graph $G$, the ends of the edges connecting the disk $z$ with the disk
$x$, are numbered $1,2,\dots,q$ in the disk~$x$; these ends are the
points $v_1,v_2,\dots,v_{q}$ in $\partial x$. Then, after
sliding $z$ along~$x$, the new disk $z$ 
carries the edges with ends that were numbered $1,2,\dots,q$ in $\bar
x$. Thus, now the ends of the edges connecting $z$ and $x$, after the
slide, have ends which are
the image of the rotation $\rho_q$ of angle $2\pi q/p$ of the
points~$v_1,v_2,\dots,v_{q}$; that is, the ends are the
points~$v_{q+1},v_{q+2},\dots,v_{2q}$ which are numbered
$q+1,q+2,\dots,2q$ in $x$. 

We see that after sliding $\kappa_1-1$ times~$z$ along $x$, the
ends of the edges connecting~$z$ and $x$ are numbered
$(\kappa_1-1)q+1,(\kappa_1-1)q+2,\dots,\kappa_1q$ in $x$. Then after
sliding~$\kappa_1$ times $z$ along $x$, the points still connected
by edges in $x$
are numbered~$\kappa_1q+1,\kappa_1q+2,\dots,p$. Now, by hypothesis
$p=\kappa_1 q+r_1$, 
then $\kappa_1q+1=p-r_1+1$, which means that there are $r_1$ points left in
$x$. That is, see Figure~\ref{fig44}, we have a graph, the image of
$G$ after the slides, with fat vertices 
$x,\bar x, z, \bar z$; there are $r_1$ edges connecting~$x$ with $z$;
there are $r_1$ 
edges connecting $\bar x$ with with $\bar z$; and there are~$q-r_1$ edges
connecting $z$ with $\bar z$. Now, the edge with one end in $z$ numbered
with 1 has the other end numbered with $p-r_1+1\in x$; and the edge
with one end in $\bar x$ numbered with $p-r_1+1$ has the other end in
$\bar z$ numbered with $q-r_1+1$. 

Therefore, the new diagram is the
Whitehead diagram obtained by drilling out the canonical 2--handle of
length $r_1$ of the $(q,r_1)$--manifold. Since the disks $\bar x$,
and~$\bar z$ were never touched, the point at infinity is a  fixed
point 
of the handle slides.

Notice that if $q=1$, then $\kappa_1=p$, and $r_1=0$, and everything
is easier: The image
graph $G$ above, in this case, replacing the values of $q$ and $r_1$, has
four fat vertices~$x,\bar x, z, \bar z$; there are~$0$ edges
connecting $x$ with $z$; 
there are $0$ 
edges connecting~$\bar x$ with with $\bar z$; and there is $1$ edge
connecting $z$ with $\bar z$. That is, after canceling the handle
defined by~$x$, we obtain the (1,0)--manifold.
\end{proof}


\begin{corollary}
\label{coro63}
Let $r_1,r_2$ be a pair of coprime integers, $0<r_2<r_1$. Assume
\begin{equation*}
   \begin{array}{lll}
r_1=&\kappa_1 r_2+r_3, &   0< r_3<r_2\\
r_2=&\kappa_2 r_3+r_4,  &  0<r_4<r_3\\
&\hfill\vdots \hfill& \null\hfill\vdots\hfill\\
r_{n-1}=&\kappa_{n-1}r_{n}+1,& 0<1<r_{n}\\
r_{n}=&\kappa_n,
    \end{array} 
\end{equation*}
with $\kappa_i\geq1$, $i=1,\dots,n$.

Let $G$ be the graph of the Whitehead diagram obtained by 
drilling out the canonical 2--handle of length~$r_2$ of the
$(r_1,r_2)$--manifold. 
Let $G_1$ be the $\kappa_1$--slid graph obtained
from the $(r_1,r_2)$--graph~$G$. 
For $i=1,\dots,n-1$, let $G_{i+1}$ be the~$\kappa_{i+1}$--slid graph obtained 
from the $(r_{i},r_{i+1})$--graph~$G_i$.

Then $G_n$ is the graph of the Whitehead diagram
obtained by drilling out the canonical 2--handle of length $0$ of the
$(1,0)$--manifold $(V,\alpha)$. 

The point at infinity is a fixed point of these
handle slides.

\end{corollary}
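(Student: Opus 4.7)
The proof is a straightforward induction on $n$, using the lemma as the inductive engine; the corollary is essentially the statement that the Euclidean algorithm applied to the pair $(r_1,r_2)$ translates, step by step, into the sequence of handle slides described.

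The plan is to show by induction on $i$ that, for $i=1,\dots,n$, the graph $G_i$ (obtained from $G$ by performing $\kappa_1+\kappa_2+\cdots+\kappa_i$ handle slides of $z$ along $x$ in the accumulated sense) is the graph of the Whitehead diagram obtained by drilling out the canonical 2--handle of length $r_{i+2}$ of the $(r_{i+1},r_{i+2})$--manifold, where we adopt the convention $r_{n+1}=1$ and $r_{n+2}=0$. The base case $i=1$ is the conclusion of the preceding lemma applied with $p=r_1$ and $q=r_2$: since $r_1=\kappa_1 r_2+r_3$ with $0\leq r_3<r_2$, the $\kappa_1$--slid graph $G_1$ of the $(r_1,r_2)$--graph $G$ is the graph obtained by drilling out the canonical 2--handle of length $r_3$ of the $(r_2,r_3)$--manifold.

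For the inductive step, assume the claim holds for some $i<n$. Then $G_i$ is the graph obtained by drilling out the canonical 2--handle of length $r_{i+2}$ of the $(r_{i+1},r_{i+2})$--manifold. The relation $r_{i+1}=\kappa_{i+1} r_{i+2}+r_{i+3}$ with $0\leq r_{i+3}<r_{i+2}$ allows us to apply the lemma again, now with $p=r_{i+1}$ and $q=r_{i+2}$: the $\kappa_{i+1}$--slid graph obtained from $G_i$, which is $G_{i+1}$, is the graph obtained by drilling out the canonical 2--handle of length $r_{i+3}$ of the $(r_{i+2},r_{i+3})$--manifold. This closes the induction. At the top of the Euclidean tower, $r_{n-1}=\kappa_{n-1}r_n+1$ and then $r_n=\kappa_n\cdot 1+0$, so after the $n$--th iteration we arrive at the graph obtained by drilling out the canonical 2--handle of length $0$ of the $(1,0)$--manifold, as desired.

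The assertion about the point at infinity requires no separate argument: each application of the lemma already carries with it the observation that the disks $\bar x$ and $\bar z$ are untouched by the $\kappa_{i+1}$ slides of $z$ along $x$, and the point $\infty\in\alpha$ always lies on an edge with at least one endpoint in these disks, so it is fixed at every step. I do not expect any real obstacle here; the only thing to be slightly careful about is the bookkeeping of the indices $r_i$, and the degenerate situations at the last step (when $r_n=1$ or the final quotient saturates the disk $x$), both of which are already handled by the final paragraph of the lemma's proof.
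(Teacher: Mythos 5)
Your induction is exactly the intended argument: the paper offers no written proof for this corollary beyond the box, since it is just the iterated application of the preceding lemma along the Euclidean algorithm, which is precisely what you spell out (with the same convention $r_{n+1}=1$, $r_{n+2}=0$ that the paper records in Remark~\ref{remark64}). Your handling of the point at infinity and of the degenerate last step via the final paragraph of the lemma's proof also matches the paper, so the proposal is correct and essentially identical in approach.
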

\hfill $\Box$

\begin{remark}
\label{remark64}
The graph $G_i$ in the statement of Corollary~\ref{coro63} is the
graph of the Whitehead diagram obtained by drilling out the 
canonical 2--handle of length $r_{i+2}$ of the
$(r_{i+1},r_{i+2})$--manifold. Then $G_i$ is a 
graph with four fat vertices $\xi,\bar\xi,\zeta$, and~$\bar\zeta$. The
symbols $\xi$ and $\zeta$ stand for the symbols $x$ and $z$ in some
order (that is, the sets $\{\xi,\zeta\}$ and $\{x,z\}$ are equal, but
just as unordered sets). There are
$r_{i+2}$ edges connecting $\zeta$ and $\xi$; there are $r_{i+2}$
edges connecting~$\bar\zeta$ and $\bar\xi$; and there
are~$r_{i+1}-r_{i+2}$ edges connecting $\xi$ 
with $\bar\xi$.
\end{remark}

\begin{remark}
\label{remark65}
Let $p,q$ be a pair of coprime integers, 
and assume that $p/q=[\kappa_1,\dots,\kappa_n]$, as a continued
fraction, with $\kappa_i\geq1$ for each $i$. 
\begin{enumerate}
\item \label{remark651} Write $p_i/q_i=[\kappa_1,\dots,\kappa_i]$ with $p_i,q_i$
  coprimes. Write $p_0=1,
  p_{-1}=0$, and $q_0=0, q_{-1}=1$. It is well known that
  $p_i=\kappa_i p_{i-1}+p_{i-2}$, and $q_i=\kappa_i q_{i-1}+q_{i-2}$;
  also
  $p_iq_{i-1}-p_{i-1}q_i=(-1)^i$
  for $i\geq1$. (Article~337 and~338 of~\cite{HK}). Since
  $\kappa_i\geq1$, one easily shows $p_i>q_i>0$ for $i\geq1$. In
  particular, $p>q>0$. Note also that~$p_{i+1}>p_{i}$.  

\item Let $r,s$ be the two coprime integers $p_{n-1},q_{n-1}$,
  respectively, and let $(V,\alpha)$ be the $(p,q)$--manifold. Then the
  $(r,s)$--torus curve can be drawn on~$\partial 
  V$ as a simple closed curve, $\beta$, which intersects~$\alpha$ exactly at the
  point at infinity for
  $ps-qr=\pm1$. 
  Note that if $n$ is even, then the point at infinity
  is at the right in the Whitehead diagram, and if $n$ is odd, it is
  at the left, as in 
  Figure~\ref{fig42}.
  The curve $\beta$ can be visualized on the Whitehead
  diagram of the $(p,q)$--manifold as a set of new edges connecting the fat
  vertices, and disjoint with the Whitehead graph, and a single new edge
  intersecting the Whitehead graph at the point at infinity. 
  Conversely,
  the curve $\alpha$ can be visualized in a similar way on the Whitehead
  diagram of the $(r,s)$--manifold.
  
  Notice that between two edges of $\alpha$, there is at most one edge
  of $\beta$ for,~$p>r$.
\end{enumerate}
\end{remark}

\begin{theorem}
\label{theorem66}
Assume $p/q=[\kappa_1,\dots,\kappa_n]$ with $p,q$ coprime, and $\kappa_i\geq1$
for each~$i$. Let $r,s$ be the pair of coprime integers such that
$r/s=[\kappa_1,\dots,\kappa_{n-1}]$.
Let $(V,\alpha)$ be the $(p,q)$--manifold, and let
$\beta\subset \partial V$ be the $(r,s)$--torus curve such that
$\alpha$ intersects~$\beta$ exactly at the point at infinity. 

If
$\gamma\subset V$ is the canonical 2--handle of length $q$ of
the $(p,q)$--manifold, then the
exterior $E(\gamma)$ is a regular neighbourhood of $\alpha\cup\beta$.
\end{theorem}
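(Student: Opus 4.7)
The plan is to deduce the theorem from Corollary~\ref{coro63} by interpreting what the fully reduced Whitehead diagram says about $E(\gamma)$. Drilling the canonical 2--handle $\gamma$ produces the Whitehead diagram with four fat vertices $x,\bar x,z,\bar z$ analyzed in the lemma preceding Corollary~\ref{coro63}, where $z$ is the natural $\partial$--parallelism disk for $\gamma$. Applying Corollary~\ref{coro63} to the Euclidean expansion $p/q=[\kappa_1,\ldots,\kappa_n]$ (so $r_1=p$, $r_2=q$), a sequence of $\kappa_1+\cdots+\kappa_n$ handle slides of $z$ along (successive images of) $x$, all fixing the point at infinity, transforms the Whitehead graph of $(E(\gamma),\alpha)$ with respect to $\{x,z\}$ into the graph $G_n$ obtained by drilling the canonical 2--handle of length $0$ out of the $(1,0)$--manifold. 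By Remark~\ref{remark64} with $r_{n+1}=1$ and $r_{n+2}=0$, $G_n$ has vertices $\xi,\bar\xi,\zeta,\bar\zeta$, a single edge joining $\xi$ to $\bar\xi$, and $\zeta,\bar\zeta$ isolated.

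From this final diagram one reads off that $E(\gamma)$ is a genus--$2$ handlebody, that the image of $\alpha$ meets the final meridional disk $\xi$ transversely in a single point and is disjoint from $\zeta$, and that therefore $\alpha$ is a core of the $\xi$--handle. The remaining handle, corresponding to the disk $\zeta$, has its own core curve $\delta\subset E(\gamma)$, and $\alpha\vee\delta$ is a wedge spine for $E(\gamma)$. To finish the proof it suffices to identify $\delta$ with $\beta$ up to isotopy on $\partial E(\gamma)$.

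The identification of $\delta$ with $\beta$ is the main technical obstacle. By Remark~\ref{remark65}(2), $\beta$ can be drawn on the initial $(p,q)$--Whitehead diagram as $r$ arcs, $r-1$ of them disjoint from the Whitehead graph of $\alpha$ and one crossing $\alpha$ only at the point at infinity. Since $\gamma$ lies between $x$ and $\infty$, it may be isotoped away from $\beta$, so that $\beta\subset\partial E(\gamma)$ persists through the reduction. I would then track $\beta$'s image under the handle slides of the first step above, which all fix $\infty$, using the convergent identity $p_iq_{i-1}-p_{i-1}q_i=(-1)^i$ from Remark~\ref{remark65}(1) to show inductively that at the $i$--th intermediate stage $\beta$ is represented as the curve on $\partial V$ dual to the current $\alpha$ with respect to the current disk system. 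After all $n$ stages this forces the image of $\beta$ to consist of a single edge from $\zeta$ to $\bar\zeta$ meeting the image of $\alpha$ only at $\infty$, exhibiting $\beta$ as a core of the $\zeta$--handle. Granted this, $E(\gamma)$ is a regular neighbourhood of the spine $\alpha\cup\beta$, as required.
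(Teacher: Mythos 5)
Your first step agrees with the paper's: drill out the canonical 2--handle and run the Euclidean reduction of Corollary~\ref{coro63} on the $\alpha$--edges, all slides fixing the point at infinity, to conclude that $\alpha$ becomes a core of one handle of the genus two handlebody $E(\gamma)$. You have also correctly isolated where the real work lies: showing that after the slides $\beta$ is a core of the \emph{other} handle. But that is precisely the content of the theorem, and your proposal only announces a plan for it. The inductive statement you offer --- that at the $i$--th intermediate stage $\beta$ is ``the curve on $\partial V$ dual to the current $\alpha$ with respect to the current disk system'' --- is neither proved nor correct as stated: after $\kappa_1+\cdots+\kappa_i$ slides the image of $\beta$ is still a multi-stranded pattern (the drilled $(\rho_{i+1},\rho_{i+2})$--diagram, in the notation of the paper's proof), not a single dual edge; it only collapses to one edge from $\zeta$ to $\bar\zeta$ after the final $\kappa_n$ slides. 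The convergent identity $p_iq_{i-1}-p_{i-1}q_i=(-1)^i$ of Remark~\ref{remark65} does not, by itself, tell you how a handle slide redistributes the $\beta$--edges, so the induction has no engine as written.

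The missing idea is the observation the paper's proof turns on: because $ps-qr=\pm1$ and between two consecutive $\alpha$--edges there is at most one $\beta$--edge, the canonical 2--handle of length $q$ for the $\alpha$--edges is \emph{simultaneously} the canonical 2--handle of length $s$ for the $\beta$--edges. Consequently the very same sequence of slides that performs the Euclidean algorithm $r_1=p,\ r_2=q,\ r_i=\kappa_i r_{i+1}+r_{i+2}$ on $\alpha$ performs the algorithm $\rho_1=r,\ \rho_2=s,\ \rho_i=\kappa_i\rho_{i+1}+\rho_{i+2}$ on $\beta$, and these share the partial quotients $\kappa_1,\dots,\kappa_{n-1}$ because $r/s=[\kappa_1,\dots,\kappa_{n-1}]$. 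The correct induction hypothesis is that after stage $i$ the diagram is at once the drilled $(r_{i+1},r_{i+2})$--diagram for the $\alpha$--edges and the drilled $(\rho_{i+1},\rho_{i+2})$--diagram for the $\beta$--edges (Remark~\ref{remark64}); at stage $n-1$ this leaves a single $\beta$--edge joining $\xi$ to $\bar\xi$ through $\infty$, and the last $\kappa_n$ slides yield one $\alpha$--edge over one handle and one $\beta$--edge over the other, meeting only at the point at infinity, which is exactly what exhibits $E(\gamma)$ as a regular neighbourhood of $\alpha\cup\beta$. Without this simultaneous bookkeeping your ``main technical obstacle'' remains open, so the proposal as it stands has a genuine gap, even though its intended route is the same as the paper's.
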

\begin{proof}
Let $G$ be the graph of the Whitehead
diagram obtained by drilling out the canonical 2--handle of length~$q$ of
the $(p,q)$--manifold, but including the arcs of the curve~$\beta$.
Call \emph{$\alpha$--edges} the edges of $G$ corresponding to the
$(p,q)$--torus curve $\alpha$, and~\emph{$\beta$--edges} the edges of
$G$ corresponding 
to the $(r,s)$--torus curve~$\beta$.

Writing $r_1=p$, and $r_2=q$, the statement
$p/q=[\kappa_1,\dots,\kappa_n]$ with $\kappa_i\geq1$
means: there are integers $r_3,\dots,r_n$
such that
\begin{equation*}
   \begin{array}{lll}
r_1=&\kappa_1 r_2+r_3, &   0< r_3<r_2\\
r_2=&\kappa_2 r_3+r_4,  &  0<r_4<r_3\\
&\hfill\vdots \hfill& \null\hfill\vdots\hfill\\
r_{n-1}=&\kappa_{n-1}r_{n}+1,& 0<1<r_{n}\\
r_{n}=&\kappa_n.
    \end{array} 
\end{equation*}
See Remark~\ref{remark65},~(\ref{remark651}). Writing $\rho_1=r$ and
$\rho_2=s$, the statement $r/s=[\kappa_1,\dots,\kappa_{n-1}]$ means:
there are integers~$\rho_3,\dots,\rho_{n-1}$ 
such that
\begin{equation*}
   \begin{array}{lll}
\rho_1=&\kappa_1 \rho_2+\rho_3, &   0< \rho_3<\rho_2\\
\rho_2=&\kappa_2 \rho_3+\rho_4,  &  0<\rho_4<\rho_3\\
&\hfill\vdots \hfill& \null\hfill\vdots\hfill\\
\rho_{n-2}=&\kappa_{n-2}\rho_{n-1}+1,& 0<1<\rho_{n-1}\\
\rho_{n-1}=&\kappa_{n-1}.
    \end{array} 
\end{equation*}

Notice that the canonical 2--handle of length $q$ for the $(p,q)$--manifold
is the canonical 2--handle of length $q$ for the $\alpha$--edges of
$G$, but it is 
also the canonical 2--handle of length~$s$ for the $\beta$--edges of $G$. Then
the graph $G_{n-1}$ of Corollary~\ref{coro63} (Remark~\ref{remark64})
contains four fat 
vertices 
$\xi,\bar\xi,\zeta$, and $\bar\zeta$. Note $r_{n+1}=1$;
then there is
a single $\alpha$--edge connecting~$\zeta$ and $\xi$; there is a
single $\alpha$--edge connecting~$\bar\zeta$ and $\bar\xi$; and there
are $r_{n}-1$ $\alpha$-edges connecting $\xi$ 
with $\bar\xi$.
Note that $\rho_n=1$ and $\rho_{n+1}=0$; then there is a single $\beta$--edge
connecting $\xi$ with $\bar \xi$ intersecting the
$\alpha$--edge connecting $\bar\zeta$ and $\bar\xi$ at the point at
infinity; and there are no more $\beta$--edges. The graph $G_n$ is
obtained by sliding $\zeta$ through $\xi$ the number $\kappa_n=r_n$ of
times. Then $G_n$ has a single $\alpha$--edge connecting $\xi$ 
with $\bar\xi$ and a single $\beta$--edge connecting $\zeta$ with $\bar
\zeta$ intersecting at the point at infinity. The theorem follows.

Notice that when $q=1$, $n=1$, the graph $G_{n-1}=G$.

\end{proof}


\subsection{One-handledness of knots}
\label{subsec62}
\begin{theorem}
\label{thm67}
If $k$ is a non-fibered free genus one knot in $S^3$, then $k$ is
almost fibered.
\end{theorem}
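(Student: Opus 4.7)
The plan is to argue by contradiction, assuming $k$ is non-fibered and \emph{not} almost fibered, and to build an explicit one-handled circular decomposition of $E(k)$ based on $F$, contradicting Remark~\ref{remark24} (which asserts that a non almost fibered non-fibered knot must have at least two non-parallel incompressible Seifert surfaces, but moreover the structure theorem forces $cw(k)=(4,\dots,4)$ with $\geq 2$ handles). Concretely, if $h(F)=1$ then by Remark~\ref{remark37} and the fact that $k$ is non-fibered, $k$ is almost fibered, so producing a one-handled decomposition based on $F$ finishes the argument.

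First I would set up the geometric data. Let $F$ be a free genus one Seifert surface for $k$. If $F$ is the unique genus one Seifert surface for $k$, then by Remark~\ref{remark24}, $k$ is almost fibered and we are done. Otherwise, Theorem~\ref{thm52} produces a spine $\Gamma=a_1\vee a_2$ with $a_1$ conjugate to $g^p$ for a primitive $g\in\pi_1(E(F))$, $p\geq 2$, and with $a_2$ spoiling disks for $a_1$. Lemma~\ref{lema51} then gives an essential disk $\Delta\subset E(F)$ with $\Delta\cap a_1=\emptyset$, and cutting $E(F)$ along $\Delta$ splits the genus-two handlebody into two solid tori $V_0,V_1$, with $a_1\subset\partial V_0$. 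The curve $a_1$ is a $(p,q)$-torus curve on $\partial V_0$ for some $q$ coprime to $p$, and $\Gamma\cap V_0$ consists of $a_1$, a single arc of $a_2$ through the wedge point meeting $a_1$ transversely in one point with both endpoints on $\partial\Delta$, and a family of parallel arcs of $a_2$ with endpoints on $\partial\Delta$ disjoint from $a_1$.

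Second, I would apply the $(p,q)$-torus machinery of Section~\ref{subsec61}. Thinking of $(V_0,a_1)$ as the $(p,q)$-torus sutured manifold with the point at infinity placed at the distinguished transverse intersection of $a_1$ with the special arc of $a_2$, I would choose $\gamma\subset V_0$ to be the canonical 2-handle of length $q$, positioned so that $\gamma$ avoids all arcs of $\Gamma\cap V_0$; the point at infinity lying on the spoiling arc is the configuration that makes this possible, since the canonical 2-handle can be pushed to one side of that arc. By Theorem~\ref{theorem66}, writing $p/q=[\kappa_1,\dots,\kappa_n]$ and choosing $r,s$ so that $r/s=[\kappa_1,\dots,\kappa_{n-1}]$, the exterior $E(\gamma)\subset V_0$ deformation retracts onto $a_1\cup\beta$, where $\beta$ is an $(r,s)$-torus curve on $\partial V_0$ meeting $a_1$ exactly at the point at infinity. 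The key geometric claim, which I would verify by careful isotopy using that $a_2$ spoils disks for $a_1$, is that one may take $\beta$ to be isotopic on $\partial V_0$ (rel the point at infinity) into $(\Gamma\cap V_0)\cup(\text{arcs on }\partial\Delta)$, so that $E(\gamma)\cap V_0$ contains all of $\Gamma\cap V_0$.

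Finally, I would read off the handle decomposition. The arc $\gamma$ is boundary parallel in $V_0\subset E(F)$, hence in $E(F)$, so it is the co-core of a 2-handle $P$ in $E(F)$ disjoint from $F\times\{0\}$. Drilling $P$ out of $E(F)$ produces a manifold homeomorphic to $E(F)$ with one 1-handle attached; by the paragraph above, this manifold deformation retracts onto a regular neighbourhood of $\Gamma\cup(\text{an arc in }V_1)$, which is $F\times[0,\tfrac12]$ together with a single 1-handle. Equivalently, the Whitehead graph of $(E(F),\Gamma)$ after drilling $\gamma$ is disconnected after suitable slides along the new disk $z$, providing the essential disk disjoint from $\Gamma$ called for in Section~\ref{sec27}, which is the core of the 1-handle completing the decomposition. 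Thus $h(F)\leq 1$, and since $k$ is not fibered, $h(k)=h(F)=1$, so $k$ is almost fibered.

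The main obstacle will be the third step: producing $\gamma$ simultaneously disjoint from \emph{all} arcs of $a_2\cap V_0$ (not just the distinguished one through the wedge point) while keeping the retraction image $a_1\cup\beta$ inside $\Gamma$. The parallel arcs of $a_2$ sitting in $V_0$ cut off regions from $\partial V_0$ that could obstruct both the placement of $\gamma$ and the isotopy of $\beta$. The input needed to overcome this is precisely that $a_2$ spoils disks for $a_1$: this ensures those parallel arcs are sufficiently wrapped around $a_1$ (they must intersect every compressing disk of $V_0$ transverse to $\Delta$ at least twice) so that, after the Euclidean algorithm collapse of Corollary~\ref{coro63}, they can be absorbed into the retract $a_1\cup\beta$ rather than forming an independent obstruction.
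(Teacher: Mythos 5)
Your outline tracks the paper's own strategy (Theorem~\ref{thm52}, Lemma~\ref{lema51}, cutting along $\Delta$, the canonical 2--handle and Theorem~\ref{theorem66}), but the step you yourself flag as the main obstacle is where the argument genuinely breaks, and the mechanism you propose to fix it is the wrong one. After cutting along $\Delta$, the arc $c$ of $a_2$ through the wedge point is a rel~$\Delta$ $(r',s')$--torus curve with $ps'-qr'=\pm1$, but this condition only determines $(r',s')$ up to adding integer multiples of $(p,q)$. Theorem~\ref{theorem66} retracts $E(\gamma)$ onto $a_1\cup\beta$ where $\beta$ has the \emph{specific} slope $r/s=[\kappa_1,\dots,\kappa_{n-1}]$; no isotopy of $\partial V_0$ rel the base point (or rel $\Delta$) can change the slope of $c$, so your ``key geometric claim'' that $\beta$ can be isotoped into $\Gamma\cap V_0$ is false whenever $(r',s')\neq(r,s)$. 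The needed move is not an isotopy and is not supplied by the spoiling hypothesis: one must \emph{change the spine}, sliding $a_2$ along $a_1^{\pm1}$ (Dehn twists along $a_1$), which is permitted by Remark~\ref{remark26} and shifts $(r',s')$ by multiples of $(p,q)$ until $c$ is exactly the $(r,s)$--curve matching the continued-fraction predecessor. This normalization is the central technical point of the proof and is absent from your proposal.

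Two further points where your argument would not close. First, the parallel arcs $b_1,\dots,b_m$ need no ``wrapping'' input from the spoiling condition: they lie in $\partial H_0-(\mathrm{Int}(\Delta)\cup a_1\cup c)$, which is a disk, so they are automatically rel~$\Delta$ parallel to $a_1$, and one disposes of them by placing $\gamma$ so that it encircles $c$, the $q$ strands of $a_1$, and whatever lies between them; your claim that they must meet every compressing disk twice is neither true nor what is used. Second, producing an essential disk in $E(\gamma)$ disjoint from $\Gamma$ (equivalently, disconnecting the Whitehead graph after slides) is not by itself a one-handled decomposition: one must know that removing a neighbourhood of that disk leaves a regular neighbourhood of $\Gamma$. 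The paper gets this by introducing a meridional disk $v$ of the other solid torus $H_1$, writing $\pi_1(E(\gamma))=\langle\xi,\zeta,v\rangle$, and checking that $a_1$ and $a_2$ represent $\xi$ and $\bar\zeta\cdot W(\xi,v)$, hence associated primitive elements, so a dual disk $D_3$ exists with $\overline{E(\gamma)-\mathcal{N}(D_3)}$ a regular neighbourhood of $\Gamma$. Your unproved assertion that the drilled manifold ``deformation retracts onto a regular neighbourhood of $\Gamma\cup(\text{an arc in }V_1)$'' is precisely this missing step, and it is also where the portion of $a_2$ lying in $H_1$ (which your proposal essentially ignores) must be accounted for.
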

\begin{proof}
Let $k\subset S^3$ be a knot, and let $F\subset E(k)$ be a genus one
free Seifert surface for~$k$. Assume $k$ is not almost fibered. Then,
as in
Remark~\ref{remark37}, $k$ has another genus
one Seifert surface disjoint and not equivalent to $F$. By
Theorem~\ref{thm52} there is a spine $\Gamma=a_1\vee a_2$ for $F$
in $\partial\mathcal{N}(F)$ such that~$a_1$ represents an element
conjugate to $g^p$ with $p\geq2$, for some primitive
element~$g\in\pi_1(E(F))$, and $a_2$ spoils the disks of $a_1$. 
We shall show that the existence of such graph $\Gamma$ implies
$h(F)=1$, and, since $F$ is of minimal genus,  therefore,
$cw(k)=4$. This 
contradiction gives the theorem.

By 
Corollary~\ref{lema51}, there is an essential 2--disk $\Delta\subset E(F)$
such that~$\Delta\cap a_1=\emptyset$. We may assume that the exterior
$E(\Delta)\subset E(F)$ is not
connected, and is the union of two solid tori $H_0$ and $H_1$ with
$a_1\subset H_0$. There
is a copy of~$\Delta$ in~$\partial H_0$; then~$a_1\subset\partial
H_0-\Delta$. Write $T=\overline{\partial H_0-\Delta}$; $T$ is a once
punctured torus. A properly embedded arc
$\alpha\subset T$ is called a rel~$\Delta$ curve in $\partial H_0$, and is
visualized as the arc~$\alpha$ union a properly embedded arc in $\Delta$
with the same ends as $\alpha$. Or rather, we may regard~$\Delta$ as a
point at infinity of the torus $T/\partial \Delta$.

We have that $a_1$ is a $(p,q)$--torus curve
in $H_0$ for some $q$ (this implies that we have fixed a
longitude-meridian pair in $\partial H_0$; by changing the
longitude-meridian pair, we may assume that $0<q<p$). The intersection
$a_2\cap H_0=a_2\cap\partial H_0$ is a set
of disjoint arcs $c\cup b_1\cup\cdots\cup b_m\subset\partial H_0$ with
ends in $\partial \Delta$ and such 
that $b_i\cap a_1=\emptyset$ for each $i$, and the set $c\cap a_1$ is a single
point, the base point of $\Gamma$. 

Regarding $c$ as a rel~$\Delta$ curve, $c$ is an $(r,s)$--torus rel~$\Delta$
curve in~$H_0$ with $ps-qr=\pm1$. 
Since
$ps-qr=\pm1$, any other pair $(r',s')$ such that $ps'-qr'=\pm1$ is of
the form~$(r',s')=(r+\ell p,s+\ell q)$ for some integer $\ell$.
Then by sliding $a_2$ along~$a_1^{\pm1}$ several times, we obtain a
new spine for $F$. By Remark~\ref{remark26}, we may
assume that the arc $c$ is an 
$(r,s)$--torus rel~$\Delta$ curve in~$H_0$ where, if
$p/q=[\kappa_1,\dots,\kappa_n]$ as a continued fraction with terms
$\kappa_i\geq1$, then~$r/s=[\kappa_1,\dots,\kappa_{n-1}]$.

Since $b_1,\dots,b_m\subset\partial H_0-(Int(\Delta)\cup a_1\cup
c)\cong D^2$, then each of
$b_1,\dots,b_m$ are rel~$\Delta$ curves parallel to $a_1$. 

%
\begin{figure}
\centering
\centerline{\includegraphics[height=13true cm]{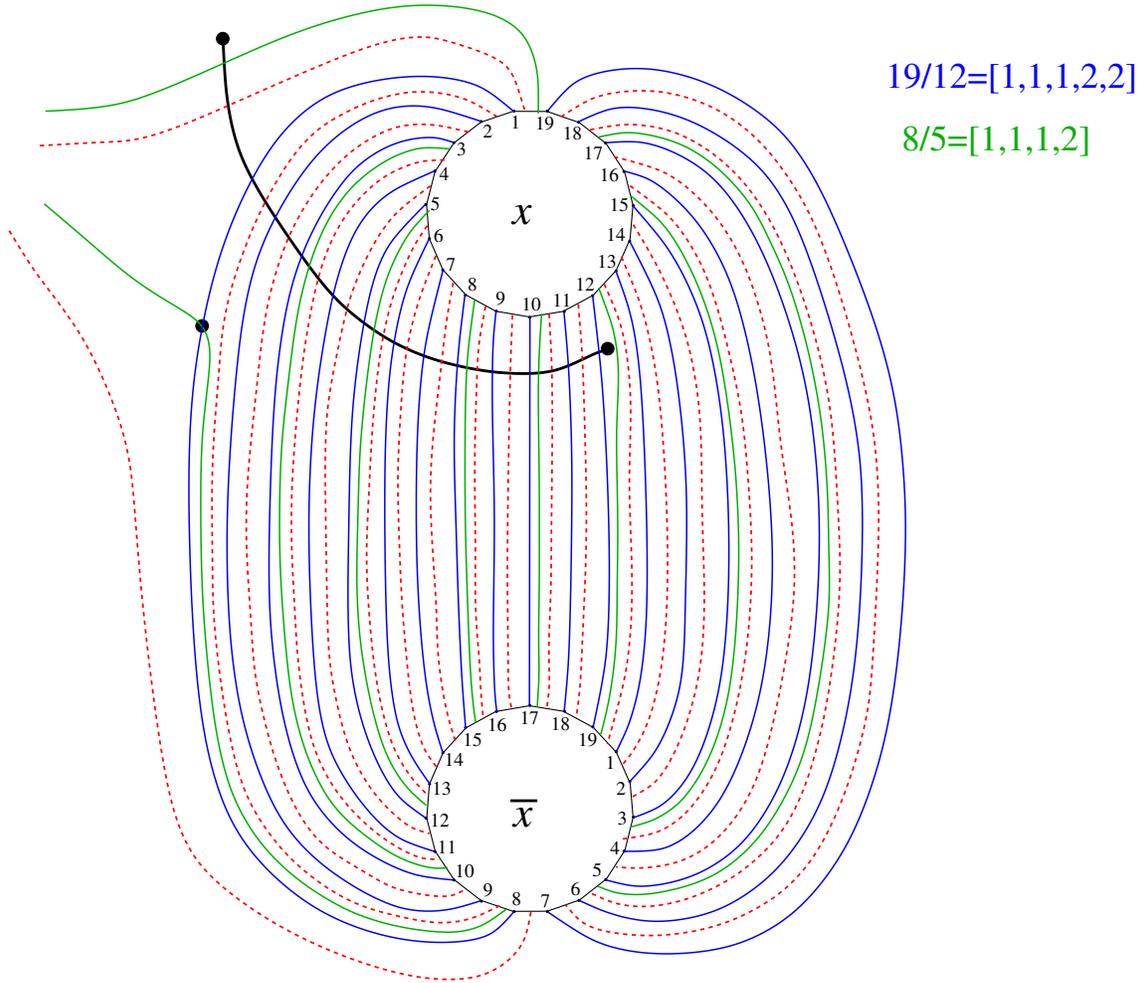}}
\caption{The (19,12) and (8,5)--torus curves}
\label{fig41}
\end{figure}

Now, consider the graph $G$ of the Whitehead diagram of the
$(p,q)$--ma\-ni\-fold $(H_0,a_1)$, and include in $G$ the edges induced by
the rel~$\partial$ curves $c,b_1,\dots,b_m$. By deforming the diagram,
we may assume that $\Delta$ is contained in a small neighbourhood of
the point at infinity which is the base point of $\Gamma$,
the point of intersection of $c$ and $a_1$. Let~$\gamma$ be the
canonical 2--handle of length $q$ for $(H_0,a_1)$.  In
the Whitehead diagram, we place $\gamma$ in
such a way that it starts by encircling the arc $c$ coming from
infinity,
and then encircles the $q$ edges belonging to $a_1$ and whatever is in
the middle, and nothing more (that is, after encircling the last edge
belonging to $a_1$, the arc $\gamma$ does not encircle any arc belonging to
$c$ or $b_1,\dots,b_m$). See Figure~\ref{fig41} where the dotted line
is a set of parallel arcs.
We drill out
$\gamma$ and, by Theorem~\ref{theorem66}, if we slide handles in the
Whitehead diagram obtained by drilling $\gamma$ out of $H_0$, we
obtain a sequence of diagrams as in
Figures~\ref{fig141}-\ref{fig146}. All handle slides fix 
point-wise the small neighbourhood of the point at infinity, and, thus, also
the disk~$\Delta$. 

The resulting Whitehead graph on $\partial H_0$ consists of four fat
vertices $\xi,\bar\xi,\zeta,\bar\zeta$; there is a single $a_1$--edge
connecting $\xi$ and $\bar\xi$, and a single $c$--edge connecting
$\zeta$ with~$\bar\zeta$ intersecting in the base point of $\Gamma$
(In Figure~\ref{fig41}, $\xi=z$ and $\zeta=x$). Notice that the
$c$--arc is actually two arcs, one connecting $\zeta$ with $\partial
\Delta$, and the other connecting $\partial\Delta$ with
$\bar\zeta$. Without lost of generality, this last arc contains the
base-point of $\Gamma$.

Let $v$ be a meridional disk for $H_1$ disjoint with $\Delta$. Then
$\xi,\zeta$ and $v$ is a system of meridional disks for the handlebody
$E(\gamma)$. Write $\pi_1(E(\gamma))=\langle
\xi,\zeta,v:-\rangle$. Then~$a_1$ represents the element $\xi$, and $a_2$
represents an element $\bar\zeta\cdot W(\xi,v)$ where $W(\xi,v)$ is a word
in the letters $\xi$ and $v$. Since $\{\xi, \bar\zeta\cdot
W(\xi,v),\zeta\}$ is a basis for $\pi_1(E(\gamma))$, it follows that
$a_1$ and $a_2$ 
represent associated primitive elements. Then we can find a system of
disks $D_1,D_2,D_3$ for $E(\gamma)$ such that $a_i\cap D_i$ is exactly
one point, and~$a_i\cap D_j=\emptyset$ for $i\neq j$, $i=1,2$, and
$j=1,2,3$. Therefore, $\overline{E(\gamma)-\mathcal{N}(D_3)}$ is a
regular neighbourhood of $\Gamma=a_1\vee a_2$. We conclude that $D_3$
is the co-core of a 1--handle that, together with $\gamma$, gives a
one-handled circular decomposition for~$E(k)$ as in
Remark~\ref{remark23}~(\ref{dos}). Since $k$ is not fibered, it
follows that $h(k)=1$, and that $k$ is almost fibered. This
contradiction finishes the proof of the theorem.

\def\hei{8}
\begin{figure}
\centering
\centerline{\includegraphics[height=\hei true cm]{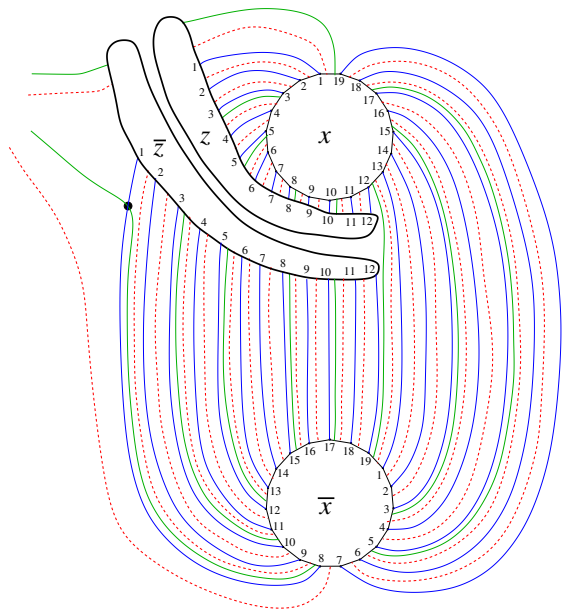}}
\caption{Slide $z$ along $x$}
\label{fig141}
\end{figure}
\begin{figure}
\centering
\centerline{\includegraphics[height=\hei true cm]{t19-12envirotado2.pdf}}
\caption{Slide $x$ along $z$}
\label{fig142}
\end{figure}
\begin{figure}
\centering
\centerline{\includegraphics[height=\hei true cm]{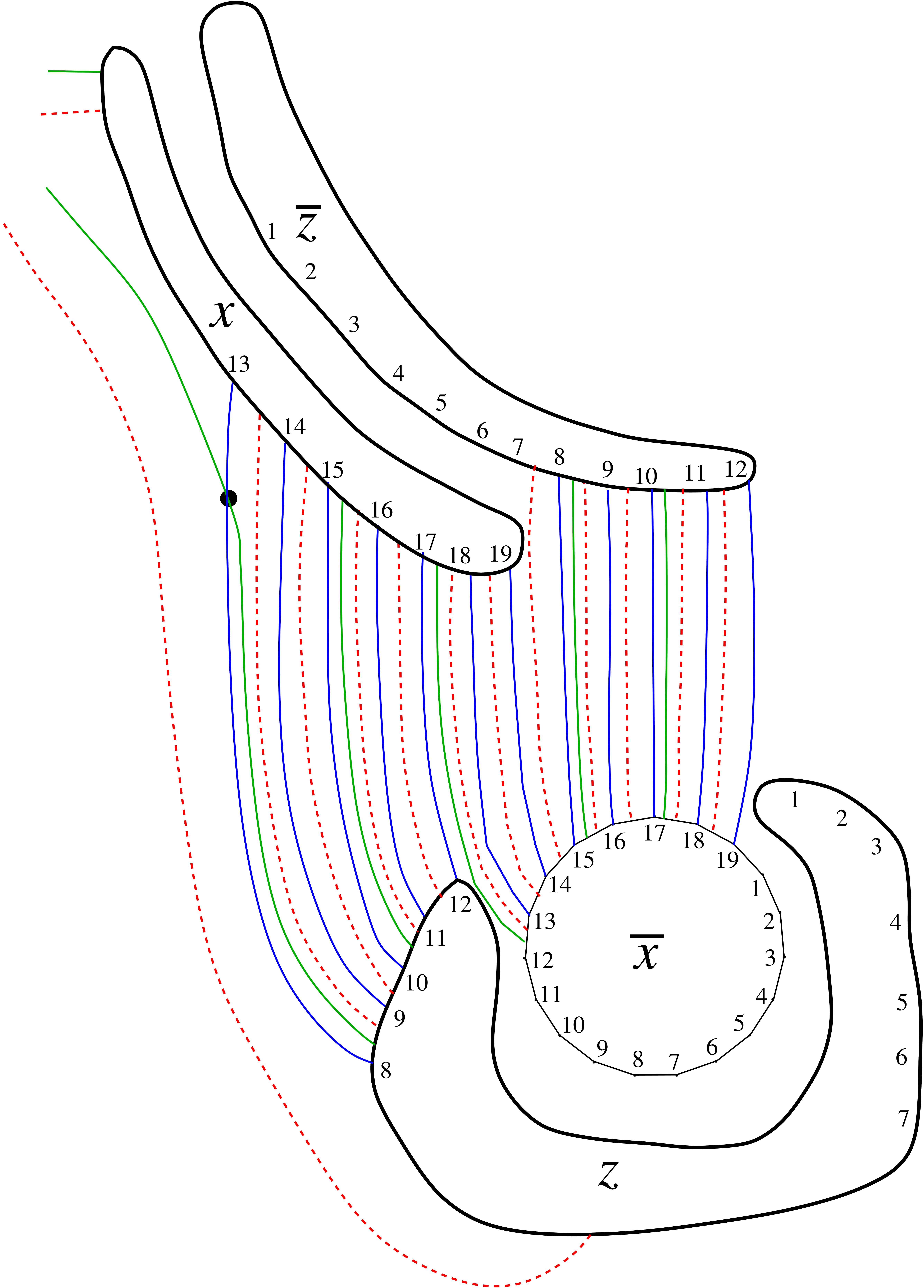}}
\caption{Slide $\bar z$ along $\bar x$}
\label{fig143}
\end{figure}
\begin{figure}
\centering
\centerline{\includegraphics[height=\hei true cm]{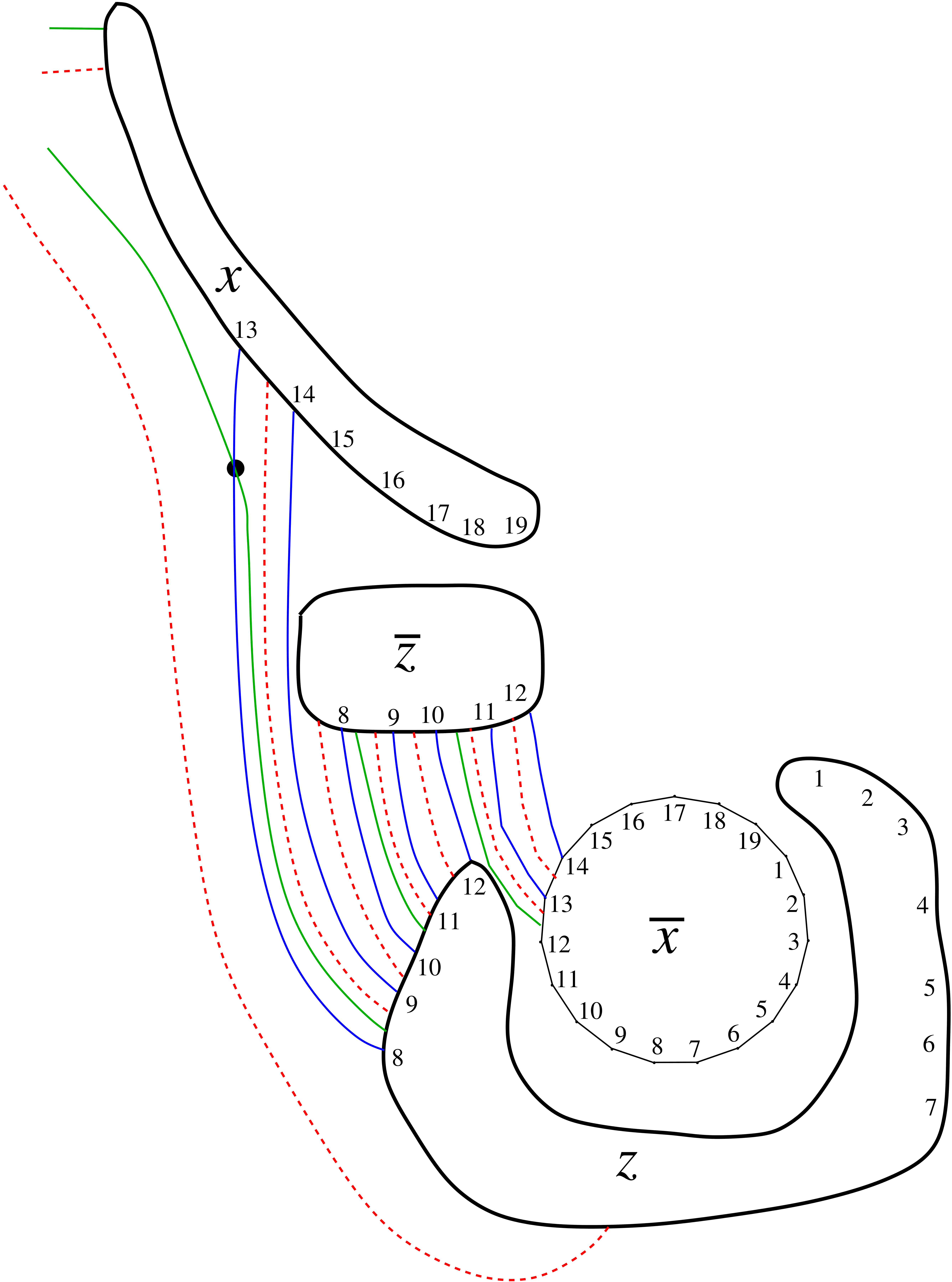}}
\caption{Slide twice $\bar x$ along $\bar z$}
\label{fig144}
\end{figure}
\begin{figure}
\centering
\centerline{\includegraphics[height=\hei true cm]{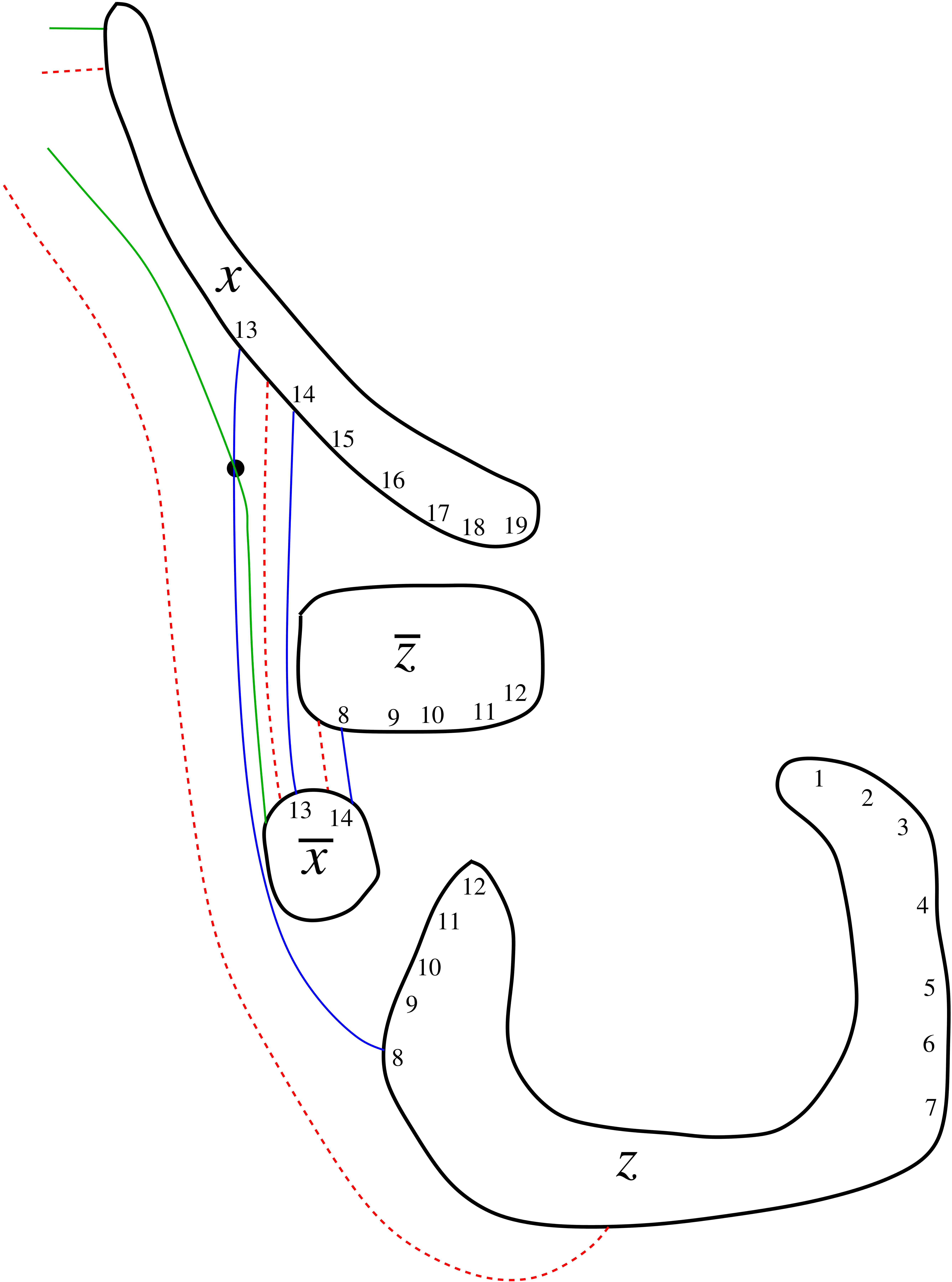}}
\caption{Slide twice $\bar z$ along $\bar x$}
\label{fig145}
\end{figure}
\begin{figure}
\centering
\centerline{\includegraphics[height=\hei true cm]{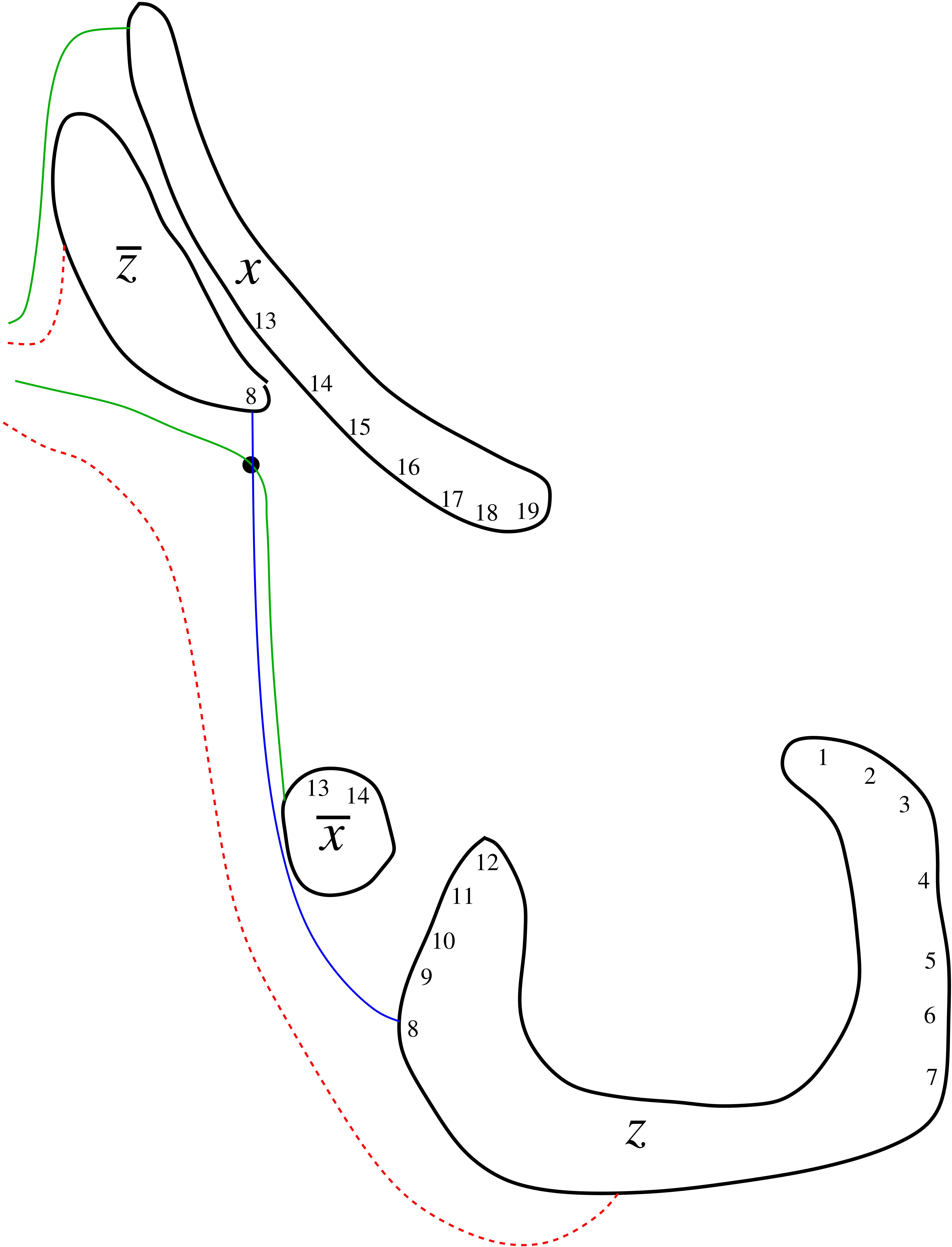}}
\caption{A long slide of $x$ deletes curve}
\label{fig146}
\end{figure}

\end{proof}

\begin{remark}
By \cite{pajilon}, a tunnel number one knot admits a one-handled
circular decomposition based on some not specified surface. In
\cite{charle} genus one  knots with tunnel number one were classified,
and it turns out that these knots are free genus one knots. Let $k$
be a non fibered genus one  knot with tunnel number one. In
Example~\ref{ex38}, we 
considered the case that $k$ is simple, and in the proof of
Theorem~\ref{thm67}, we considered the case that $k$ is not simple. It
follows that for these knots, their circular width is realized with a
one-handled circular decomposition based on a minimal (genus one) free
Seifert surface.

\end{remark}


\begin{thebibliography}{99}
%

\bibitem{gabai}
D. Gabai. \textsl{Genera of the arborescent links}. Memoirs of the
Amer. Math. Soc. 339 (1986), 1--88.

\bibitem{Go1} H. Goda, \textsl{On handle number of Seifert Surfaces in
    $S^3$}. Osaka J. Math., vol. 30 (1993), 63--80.

\bibitem{lagoda} H. Goda. \textsl{Circle valued Morse theory for knots and
  links}. Clay Math. Proceedings 5 (2006), 71--99.

\bibitem{godaishi} H. Goda, M. Ishiwata.
\textsl{A classification of Seifert surfaces for some pretzel links}.
Kobe J. Math. 23 (2006),  11--28. 

\bibitem{HK} H. S. Hall, S. R. Knight. \textsl{Higher algebra}. MacMillan,
  London, 1946.

\bibitem{rurru} M. Hirasawa, L. Rudolph. \textsl{Constructions of Morse maps
  for knots and links, and upper bounds on the Morse-Novikov
  number}. Preprint.

\bibitem{saku-klimen} E. Klimenko, M. Sakuma. \textsl{Two-generator discrete subgroups of Isom(H2) containing orientation-reversing elements}.Geom. Dedicata 72 (1998),  247--282. 

\bibitem{koba}
T. Kobayashi. \textsl{Uniqueness of minimal genus Seifert surfaces for links}. Topology
and its Appl. 33 (1989), 265--279.

\bibitem{fabiux} F. Manjarrez. \textsl{Circular thin position for
    knots in $S^3$}. Algebraic \& Geometric Topology 9 (2009), 429--454

\bibitem{pajilon} A. Pajitnov. \textsl{On the tunnel number and the Morse-Novikov number of knots}. Algebra. Geom. Topol. 10 (2010), 627--635.

\bibitem{PRW} A. Pajitnov, L. Rudolph, C. Weber, \textsl{Morse-Novikov
    number for knots and links}. St. Petersburg Math. J., vol. 13 (2002),
  no.3, 417--426.

\bibitem{charle}
M.~Scharlemann,
\textsl{There are no unexpected tunnel number one knots of genus one}. 
Trans. Amer. Math. Soc. 356 (2004), no. 4, 1385--1442

\bibitem{Charle-Tonson}
  M. Scharlemann, A. Thompson. \textsl{Finding disjoint Seifert
  surfaces}. Bull. London Math. Soc. 20 (1988), 61--64. 

\bibitem{ST} M. Scharlemann, A. Thompson, \textsl{Thin position
    for 3-manifolds}. Contemp. Math., vol. 164 (1994), AMS, 231--238. 

\bibitem{stallings} J. R. Stallings. \textsl{Whitehead graphs in
  handlebodies}. Geometric group theory down under (Canberra, 1996),
  317--330. de Gruyter, Berlin, 1999.

\bibitem{tsutsu} Y. Tsutsumi. \textsl{Universal bounds for genus
  one Seifert surfaces for hyperbolic knots and surgeries with
  non-trivial JSJT-decompositions}. Interdiscip. Inform. Sci. 9 (2003), 53--60. 

\bibitem{whitehead} J. H. C. Whitehead. \textsl{On equivalent sets of elements
  in a free group}. Ann. Math. 37 (1936), 782--800.

%
\end{thebibliography}
\end{document}